\theoremstyle{plain}
\newtheorem{theorem}{Theorem}
\newtheorem{corollary}{Corollary}[theorem]
\newtheorem{question}[]{Question}
\newtheorem{proposition}[theorem]{Proposition}
\numberwithin{theorem}{section}
\newtheorem{lemma}{Lemma}
\numberwithin{lemma}{section}
\newtheorem{example}{Example}
\numberwithin{example}{section}
\numberwithin{equation}{section}
\numberwithin{figure}{section}
\newtheorem{definition}{Definition}
\numberwithin{definition}{section}
\newtheorem{remark}{Remark}
\numberwithin{remark}{section}
\newcommand{\mbalpha}{\overline{\mathrm{M}}_{\alpha}}
\newcommand{\malpha}{\mathrm{M}_{\alpha}}
\newcommand{\kbalpha}{\overline{\mathrm{K}}_{\alpha}}
\newcommand{\kalpha}{\mathrm{K}_{\alpha}}
\newcommand{\abalpha}{\overline{\A}_{\alpha}}
\newcommand{\aalpha}{\A_{\alpha}}
\newcommand{\B}{\mathcal{B}}
\newcommand{\I}{\mathcal{I}}
\newcommand{\A}{\mathcal{A}}
\newcommand{\M}{\mathcal{M}}
\newcommand{\R}{\mathbb{R}}
\newcommand{\la}{\langle}
\newcommand{\ra}{\rangle}
\newcommand{\Z}{\mathbb{Z}}
\newcommand{\CS}{\mathrm{CS}}
\newcommand\preceqdot{\mathrel{\ooalign{$\preceq$\cr\cr
  \hidewidth\raise0.235ex\hbox{$\boldsymbol{\cdot}\mkern0.5mu$}\cr}}}
\newcommand{\precdot}{\prec\mathrel{\mkern-3mu}\mathrel{\boldsymbol{\cdot}}}
\title{Building planar polygon spaces from the projective braid arrangement}
\author[N\, Daundkar]{Navnath Daundkar}
\address{Indian Institute of Science Education and Research Pune, India.}
\email{navnath.daundkar@acads.iiserpune.ac.in}
\author[P\, Deshpande]{Priyavrat Deshpande}
\address{Chennai Mathematical Institute, India}
\email{pdeshpande@cmi.ac.in}
\thanks{PD is partially supported by a grant from the Infosys Foundation. 
This project is also supported by the MATRICS grant MTR/2017/000239 funded by SERB.}
\begin{document}
\keywords{Planar polygon space, Coxeter complex, cellular surgery}
\subjclass[2010]{55R80, 52B05, 05E45}
\maketitle

\vspace{-.8cm}
\begin{abstract}
The moduli space of planar polygons with generic side lengths is a smooth, closed manifold. 
It is known that these manifolds contain the moduli space of distinct points on the real projective line as an open dense subset. 
Kapranov showed that the real points of the Deligne-Mumford-Knudson compactification can be obtained from the projective Coxeter complex of type $A$ (equivalently, the projective braid arrangement)  by iteratively blowing up along the minimal building set. 
In this paper we show that these planar polygon spaces can also be obtained from the projective Coxeter complex of type $A$ by performing an iterative cellular surgery along a sub-collection of the minimal building set. 
Interestingly, this sub-collection is determined by the combinatorial data associated with the length vector called the genetic code. 
\end{abstract}

\section{Introduction}\label{intro}
A \emph{length vector} is a tuple of positive real numbers. 
The \emph{moduli space of planar polygons} associated with a length vector $\alpha=(\alpha_{1},\dots, \alpha_{m})$, denoted by $\malpha$, is the collection of all closed piecewise linear paths in the plane up to orientation preserving isometries with side lengths $\alpha_{1}, \alpha_{2},\dots, \alpha_{m}$. Equivalently, 
\[\malpha= \{(v_{1},v_{2},\dots,v_{m})\in (S^{1})^{m} : \displaystyle\sum_{i=1}^{m}\alpha_{i}v_{i} = 0 \}/\mathrm{SO}_{2},\]
where $S^{1}$ is the unit circle and the group of orientation preserving isometries $\mathrm{SO}_{2}$ acts diagonally.
The moduli space of planar polygons (associated with $\alpha$) viewed up to isometries is defined as 
\[\mbalpha\coloneqq \{(v_{1},v_{2},\dots,v_{m})\in (S^{1})^{m} : \displaystyle\sum_{i=1}^{m}\alpha_{i}v_{i} = 0 \}/\mathrm{O}_{2}.\]
A length vector $\alpha$ is called \emph{generic} if $\sum_{i=1}^{m}\pm \alpha_{i} \neq 0$. 
For such a length vector $\alpha$, the moduli spaces $\malpha$ and $\mbalpha$ are closed, smooth manifolds of dimension $m-3$. 
In the rest of this paper, the length vectors are assumed to be generic unless stated otherwise.

The manifold $\malpha$ admits an involution $\tau$ defined by
\begin{equation}\label{invo}
\tau(v_{1},v_{2},\dots,v_{m})=(\bar{v}_{1},\bar{v}_{2},\dots,\bar{v}_{m}),
\end{equation}
where $v_{i}=(x_{i},y_{i})$ and  $\bar{v}_{i}=(x_{i},-y_{i})$. 
Observe that $\tau$ maps a polygon to its reflected image across the $X$-axis. 
Since we are dealing with only generic length vectors, the involution $\tau$ does not have fixed points. 
It is clear that $\malpha$ is a double cover of $\mbalpha$. 

The moduli spaces of (planar) polygons have been studied extensively. For example, Farber and Schutz \cite{farber2007homology} proved that the integral homology groups of $\malpha$ are torsion-free. They also described the Betti numbers in terms of the combinatorial data associated with the length vector.
The mod-$2$ cohomology ring of $\mbalpha$ was computed by Hausmann and Knutson in \cite{HK1}. 

The configuration space of $m$-ordered, distinct points on  $\R P^1$ is 
\[C_{m}(\R P^1) := (\R P^1)^{m}\setminus \triangle,\] where $\triangle=\{(x_{1},\dots,x_{m})\in (\R P^1)^m : \exists ~ \hspace{.2mm} i, j \text{ such that } x_{i}=x_{j} \}.$
The \emph{real moduli space of genus zero curves} $\M_{0}^{m}(\R)$ is the quotient of $C_{m}(\R P^1)$ by $\mathbb{P}\mathrm{GL}_{2}(\R)$. 

There is a Deligne-Mumford-Knudson compactification $\overline{\M}_{0}^{m}(\R)$ of $\M_{0}^{m}(\R)$.
Kapranov \cite{Kapranov1993ThePM} showed that $\overline{\M}_{0}^{m}(\R)$ can be obtained from the projective Coxeter complex of type $A_{m-2}$ (equivalently the projective braid arrangement) by iteratively blowing up along the minimal building set. 
Moreover, this process results in a  regular cell structure on $\overline{\M}_{0}^{m}(\R)$ consisting of $(m-1)!/2$ copies of the associahedron as top-dimensional cells.  

It is known that for any generic length vector the corresponding planar polygon space is also a compactification of the real moduli space of genus zero curves. 
Also for every $m$, there is a unique class of length vectors for which $\malpha$ is the Coxeter complex and $\mbalpha$ is the projective Coxeter complex.
Therefore, it is natural to ask the following question. 

\begin{question}
Is there a way to obtain $\malpha$ (respectively $\mbalpha$) from the  Coxeter complex of type $A$ (respectively the projective Coxeter complex of type $A$) by some iterative topological operation?
\end{question}

In this article, we answer this question affirmatively. 
In order to achieve this, we introduce the notion of the (projective) cellular surgery on certain regular cell complexes (see \Cref{cs}).
The cellular surgery involves removing a subcomplex homeomorphic to the trivial tubular neighborhood of an embedded sphere by a complex which is a tubular neighborhood of a sphere of complementary dimension. 
The projective surgery is performed when there is a free $\Z_2$-action on the ambient space which also descends to tubular neighborhoods. 
For a given generic length vector $\alpha$ we first introduce a partial order on the collection of genetic codes, called the genetic order. 
We use this notion to describe the subcomplexes $\mathcal{G}_{\alpha}$ on which cellular surgery needs to be performed and subcomplexes $\mathbb{P}\mathcal{G}_{\alpha}$ on which the projective surgery is to be performed.
Interestingly, these complexes form a subcollection of the minimal building set.

Note that the collection of polygons in $\malpha$ (respectively $\mbalpha$) with exactly two parallel sides is a codimension-$1$ submanifold of $\malpha$ (respectively $\mbalpha$). It turns out that the collection  $\aalpha$  (respectively $\abalpha$) of such codimension-$1$ submanifolds of  $\malpha$ (respectively $\mbalpha$) forms a submanifold arrangement.
Consequently, there is a cell structure on both $\malpha$ and $\mbalpha$ induced by $\aalpha$ and $\abalpha$, respectively. 
These cell structures on $\malpha$ and $\mbalpha$, denoted by  $\kalpha$ and $\kbalpha$, respectively.
We prove the following theorem,
\begin{theorem}\label{mt}
Let $G$ be the genetic code of a length vector $\alpha=(\alpha_1,\dots,\alpha_m)$. Then
the iterated cellular surgery on the Coxeter complex $CA_{m-2}$ (respectively, on projective Coxeter complex $\mathbb{P}CA_{m-2}$) along the elements of $\mathcal{G}_{\alpha}$ (respectively $\mathbb{P}\mathcal{G}_{\alpha}$) produces the cell complex homotopy equivalent to $\kalpha$ (respectively $\kbalpha$).
\end{theorem}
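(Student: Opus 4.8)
The plan is to prove \Cref{mt} by induction along a linear extension of the genetic order, so that the whole statement reduces to the analysis of a single cellular surgery, and then to deduce the projective version by a $\Z_2$-equivariance argument. For the base case I take the unique class of length vectors with minimal genetic code, for instance $\alpha_0=(1,\dots,1,m-2-\varepsilon)$ with $\varepsilon>0$ small; one checks directly that $\{m\}$ is its only short subset containing $m$, so its genetic code is minimal and $\mathcal{G}_{\alpha_0}=\emptyset$. By the fact recalled in \Cref{intro}, $\mathrm{M}_{\alpha_0}\cong CA_{m-2}$, and one wants the slightly stronger statement that this homeomorphism can be chosen to carry the arrangement $\mathcal{A}_{\alpha_0}$ of two-parallel-side loci onto the braid arrangement --- this identifies $\mathrm{K}_{\alpha_0}$ with $CA_{m-2}$ as a regular cell complex and, crucially, identifies the flats of $\mathcal{A}_{\alpha_0}$ with the building-set subspaces, so that each $X_S$ sits inside it as a subcomplex.

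For the inductive step, choose $X_S\in\mathcal{G}_{\alpha}$ maximal in the genetic order and a generic length vector $\alpha^-$ whose chamber, in the chamber decomposition of the cone of generic length vectors, is separated from that of $\alpha$ by the single wall $W_S=\{\mu:\sum_{i\in S}\mu_i=\sum_{i\notin S}\mu_i\}$, arranged so that $\mathcal{G}_{\alpha^-}=\mathcal{G}_{\alpha}\setminus\{X_S\}$ with the induced order. The combinatorial part of the argument --- which I would isolate as a lemma --- is that such an $\alpha^-$ exists and, more generally, that $\mathcal{G}_{\alpha}$ is exactly the collection of walls $W_S$ separating the chamber of $\alpha_0$ from that of $\alpha$; this is a translation of ``$W_S$ is crossed on the path from $\alpha_0$ to $\alpha$'' into a statement about short subsets and genes, and it is the reason $\mathcal{G}_{\alpha}$ ends up inside the minimal building set. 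The geometric input is the wall-crossing behaviour of polygon spaces: I would record as a lemma that crossing $W_S$ transforms $\mathrm{M}_{\alpha^-}$ into $\malpha$ by excising a trivial tubular neighbourhood of an embedded sphere associated to $X_S$ and gluing back the tube of the complementary sphere --- precisely the surgery along $X_S$ of \Cref{cs} --- and that the submanifold arrangements $\mathcal{A}_{\alpha^-}$ and $\aalpha$ agree outside the surgery region, with only the stratum indexed by $S$ changing. Granting this, $\mathrm{K}_{\alpha^-}$ and $\kalpha$ agree away from a neighbourhood of $X_S$ and differ there by one cellular surgery.

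The place where real work is required --- the step I expect to be the main obstacle --- is to match the wall-crossing surgery above with the cellular surgery along $X_S$ as performed inside the complex produced by the earlier $|\mathcal{G}_\alpha|-1$ surgeries. One must check (i) that at that stage $X_S$ still appears as a stratum of the evolving submanifold arrangement whose associated sphere is a full subcomplex carrying the expected trivial normal tube --- for which one uses that in $CA_{m-2}$ the sphere attached to a braid flat $X_S$ is a full subcomplex with a canonical product tubular neighbourhood coming from the factorisation of the localised arrangement into two smaller braid arrangements, and that maximality of $X_S$ in the genetic order keeps this sphere and its framing untouched by (or transverse, in a controlled way, to) the earlier surgeries; and (ii) that incomparable elements of $\mathcal{G}_\alpha$ have disjoint associated spheres, so the corresponding surgeries commute and the final complex does not depend on the chosen linear extension, while comparable ones nest in the same way that orders Kapranov's blow-ups. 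The technical crux is the isotopy bookkeeping: following each surgery sphere and its framing through the entire sequence of surgeries.

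It remains to handle cellularity, the comparison up to homotopy, and the projective statement. Cellularity is routine: by cellular approximation one arranges the excised tube to be a subcomplex, glues in a fixed regular cellulation of the complementary tube, and gets again a regular CW complex carried by a space homeomorphic to $\malpha$; since $\kalpha$ is another CW structure on a homeomorphic space and the surgery was modelled on the actual wall-crossing, the natural comparison map is a homotopy equivalence --- if an explicit one is wanted, produce a common subdivision or a chain of elementary expansions and collapses. This slack is exactly why \Cref{mt} is stated up to homotopy rather than as an isomorphism of cell complexes. Finally, the involution $\tau$ of \eqref{invo} is free by genericity, preserves $\aalpha$, and preserves each flat $X_S$ and its associated sphere setwise while acting freely on them; hence every surgery above is $\Z_2$-equivariant and descends to the quotient. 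Running the induction $\tau$-equivariantly turns the iterated surgery on $CA_{m-2}$ into the iterated projective surgery on $\mathbb{P}CA_{m-2}=CA_{m-2}/\tau$ along $\mathbb{P}\mathcal{G}_\alpha=\mathcal{G}_\alpha/\tau$, with quotient complex homotopy equivalent to $\kbalpha$.
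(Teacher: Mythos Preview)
Your approach is essentially the paper's: induction along a saturated chain of genetic codes starting at $\langle m\rangle$, with base case $\mathrm{K}_{\langle m\rangle}\cong CA_{m-2}$ and inductive step supplied by Hausmann's wall-crossing theorem, then passage to the $\Z_2$-quotient for the projective statement. The paper dissolves the isotopy bookkeeping you flag as the crux by collapsing, after each surgery, the attached piece $D(S_{J^c})\times CA_{m-2-|J|}$ onto its core $CA_{m-2-|J|}\cong S_J$, which (via \Cref{shortalpha}) recovers exactly the canonical cell structure $\mathrm{K}_{G'}$; the next surgery sphere is then manifestly a subcomplex with Coxeter-complex link, so no tracking of spheres through earlier surgeries is needed.
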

This article is organised as follows:
In \Cref{pre}, we give the basics of braid arrangement, Coxeter complex, and planar polygon spaces. 
We also introduce the genetic order and derive some of its properties. 
We then introduce submanifold arrangements and study the induced cell structure. 
In  \Cref{Hausmann}, we give a description of Hausmann's theorem in the language developed in previous sections.
In  \Cref{cscpc}, we introduce the notion of cellular surgery on a simple cell complex and prove \Cref{mt}.

\section{The braid arrangement, Coxeter complex and motivation}\label{pre}
\subsection{The braid arrangement}
In this subsection, we set up notation and gather some results related to the Coxeter complex.

\begin{definition}
A finite collection, $\mathcal{A}$, of codimension-$1$ subspaces in the Euclidean space is called an arrangement of hyperplanes (or a hyperplane arrangement).
\end{definition}

\begin{definition}
The braid arrangement is the collection \[\B_m=\{H_{ij} : 1\leq i<j\leq m\},\]
where \[H_{ij}=\{(x_{1},\dots,x_{m})\in \R^{m} \mid  x_{i}-x_{j}=0\}.\] 
\end{definition}

An arrangement of hyperplanes is said to be \emph{essential} if the intersection of all hyperplanes is the origin.
The braid arrangement $\B_{m}$ is not essential, since \[\bigcap H_{ij}=\{(t,\dots,t)\in \R^{m} \mid t\in \R\}\neq \{0\}.\] 
Nevertheless, there is a way to make $\B_{m}$ essential by considering the quotient $\R^{m}/\cap H_{ij}$. 
Consider
\[V:=\{(x_{1},\dots,x_{m})\in \R^{m} \mid \sum_{i=1}^{m}x_{i}=0\},\]  
then it is easy to see that the collection \[\B_{V}=\{H_{ij}\cap V \mid 1\leq i<j\leq m\}\] is an essential arrangement in $V$. 
The arrangement $\B_{V}$ is called an \emph{essentialization} of $\B_{m}$.
Let $\mathbb{S}V$ be the unit sphere in $V$. 

\begin{definition}
The intersection of hyperplanes in $\B_{V}$ gives a simplicial decomposition of $\mathbb{S}V$.
This decomposition of $\mathbb{S}V$ is called the Coxeter complex of type $A_{m-1}$ and it is denoted by $CA_{m-1}$. 
The projective Coxeter complex $\mathbb{P}CA_{m-1}$ of type $A_{m-1}$ is the quotient of Coxeter complex $CA_{m-1}$ by the antipodal action. 
\end{definition}

It is clear that $CA_{m-1}$ has $m!$ copies of the $(m-2)$-simplex as its top-dimensional cells. 

\begin{example}
The Coxeter complex $CA_{3}$ is the $2$-dimensional sphere cellulated by $24$  triangles and $\mathbb{P}CA_{3}$ is the projective plane cellulated by $12$ triangles (see \Cref{fig:cox}).
\begin{figure}
    \centering
    \includegraphics[scale=.35]{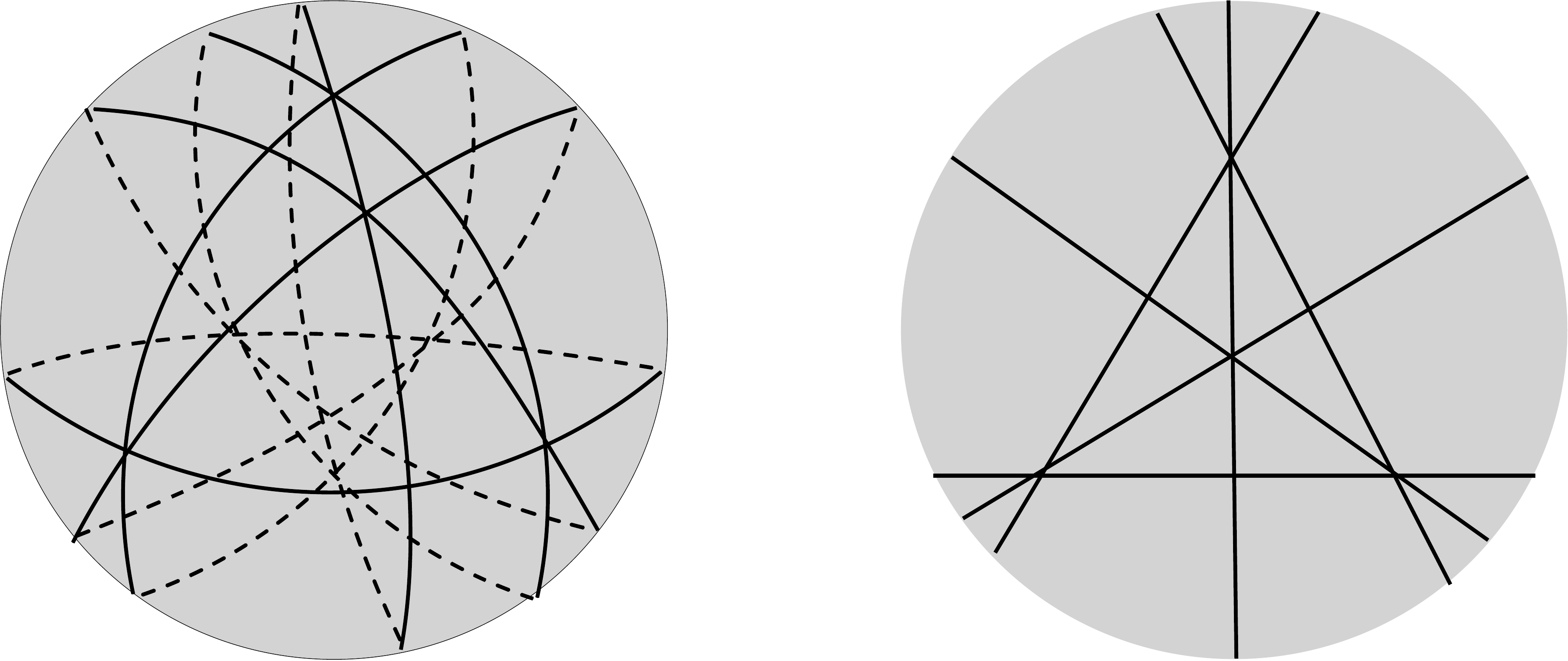}
    \vspace{2mm}
    \caption{The Coxeter complex $CA_{3}$ and the projective Coxeter complex $\mathbb{P}CA_{3}$}
    \label{fig:cox}
\end{figure}
\end{example}

The collection of all possible intersections of hyperplanes in the hyperplane arrangement $\mathcal{A}$ forms a lattice under reverse inclusion as the partial order. 
We denote this lattice by $\I(\A)$,
which is known as the  \emph{intersection lattice}.
Let $\I(\B_{m})$ be the intersection lattice of $\B_{m}$. 
It is clear that the lattices $\I(\B_{m})$ and $\I(\B_{V})$ are isomorphic. 
Moreover, it is isomorphic to the lattice of partitions of the set $[m]$, denoted by $\Pi_{m}$. 
If $\pi=J_{1}-\cdots-J_{k}$ is a partition of $[m]$ then one can associate to $\pi$ the following subspace: 
\[X_{\pi}=\{(x_{1},\dots,x_{m})\in V \mid x_{i}=x_{j} \text{ whenever $i$ and $j$ are in $J_{s}$ for some $1\leq s\leq k$} \}\] an element of $\I(\B_{V})$. 
The map \[\phi: \Pi_{m} \to I(\B_{V})\] defined by \[\phi(\pi) =X_{\pi}\] is an isomorphism.

De Concini and Procesi \cite{dconciniprocesi} identified a special collection of elements of the intersection lattice of an arrangement such that the blow-ups along these subspaces commute for a given dimension and the resulting arrangement has normal crossings.

For given intersection $X\in\I(\A)$ the subarrangement at $X$ is 
\[\A_{X} := \{H\in \A \mid X\subseteq H\}.\] 

\begin{definition}
An intersection $X\in \I(\A)$ is said to be reducible if there exist $Y$ and $Z$ in $\I(\A)$ such that $\A_{X}=\A_{Y}\sqcup \A_{Z}$, otherwise $X$ is irreducible. 
\end{definition}

\begin{definition}\label{mbs}
The minimal building set $\mathrm{Min}(\A)$ of $\A$ is the collection of all irreducible elements of $\I(\A)$.
\end{definition}

\begin{example}\label{min}
Consider the braid arrangement $\B_m$. 
The minimum building set $\mathrm{Min}(\B_{m})$ contains  intersections corresponding to those partitions of $[m]$ which have at most one block of size greater or equal  $2$.
\end{example}

Now we prove that for an element $X\in \I(\B_{V})$, the induced cell decomposition on the unit sphere in $X$ is a lower-dimensional Coxeter complex.

\begin{lemma}\label{icc}
Let $X\in \I(\B_{V})$ and $S_{X}=X\cap CA_{m-1}$. Then $S_{X}$ is isomorphic to the Coxeter complex $CA_{dim(X)-1}$.
\end{lemma}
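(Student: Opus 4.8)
The plan is to recast everything in the combinatorics of ordered set partitions. First I would record the standard dictionary for the faces of $CA_{m-1}$: a face of the arrangement $\B_V$ lying on $\mathbb{S}V$ is named by an ordered set partition $(B_1,\dots,B_r)$ of $[m]$ with $r\ge 2$, namely the face on which $x_i=x_j$ exactly when $i,j$ lie in a common block $B_\ell$ and $x_i<x_j$ exactly when $i\in B_\ell$, $j\in B_{\ell'}$ with $\ell<\ell'$. In this description a face has dimension $r-2$, the $(m-2)$-simplices are the linear orders on $[m]$, and $\sigma$ is a face of $\tau$ precisely when the ordered partition of $\sigma$ is obtained from that of $\tau$ by merging consecutive blocks.

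Using the isomorphism $\phi$ I would write $X=X_\pi$ for a partition $\pi=J_1-\cdots-J_d$ of $[m]$, and first check that $S_X=X\cap CA_{m-1}$ is genuinely a subcomplex of $CA_{m-1}$ triangulating the subsphere $\mathbb{S}X$. This is a sign-vector argument: if a point $p\in\mathbb{S}X$ lies on a face $C$ of $\B_V$, then $C$ and $p$ have the same sign vector, and since $p$ annihilates every hyperplane of $\B_V$ containing $X$, so does all of $C$; hence $C\subseteq X$. Therefore $\mathbb{S}X$ is the union of the closed faces of $CA_{m-1}$ contained in $X$, and in the dictionary above these faces are exactly the ordered set partitions $(B_1,\dots,B_r)$ of $[m]$ for which every block $J_s$ of $\pi$ is contained in a single $B_\ell$; call such partitions $\pi$-saturated.

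The core of the argument is that coarsening along $\pi$ is an isomorphism of face posets. Sending a $\pi$-saturated ordered partition $(B_1,\dots,B_r)$ of $[m]$ to the ordered partition $(\widetilde B_1,\dots,\widetilde B_r)$ of the $d$-element set $\{J_1,\dots,J_d\}$, where $\widetilde B_\ell=\{\,J_s:J_s\subseteq B_\ell\,\}$, is a bijection onto all ordered set partitions of a $d$-set with $\ge 2$ parts, it preserves the number of parts, and it intertwines the two ``merge consecutive blocks'' relations; hence it is an isomorphism from the face poset of $S_X$ onto the face poset of $CA_{d-1}$. Since both complexes are simplicial, this induces a simplicial isomorphism $S_X\cong CA_{d-1}$. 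Finally $d$ is the number of blocks of $\pi$, which under the identification $\I(\B_m)\cong\I(\B_V)$ is the quantity $\dim X$ in the statement, so $CA_{d-1}=CA_{\dim X-1}$, as claimed.

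I do not expect a deep obstacle; the two points that need care are (i) the bookkeeping of indices --- it is worth checking the small case $m=4$ with $\pi$ having blocks $\{1,2\},\{3\},\{4\}$, where $S_X$ is the hexagonally subdivided circle $CA_2$ --- and (ii) interpreting $X\cap CA_{m-1}$ correctly as ``the faces of $CA_{m-1}$ contained in $X$'' and verifying, via the sign-vector argument above, that this collection of faces is closed under taking faces and exhausts $\mathbb{S}X$. An alternative, more geometric proof would identify the restriction of $\B_V$ to $X_\pi$ with the braid-type arrangement in the $d$ block-coordinates, cut out inside the hyperplane $\sum_s|J_s|y_s=0$, and then observe that the quotient by the all-ones line carries it isomorphically onto $\B_{V'}$ for $V'\subseteq\R^d$; but the poset approach is cleaner and sidesteps reproving that a linear isomorphism of essential arrangements yields an isomorphism of the cellulated spheres.
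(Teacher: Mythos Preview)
Your proof is correct and rests on the same idea as the paper's: treat the $d$ blocks of $\pi$ as a new ground set, so that the arrangement induced on $X$ is the braid arrangement in $d$ variables. The paper states this geometrically in a single sentence, while you carry it out rigorously as a face-poset isomorphism via ordered set partitions; the ``alternative, more geometric proof'' you sketch at the end is essentially what the paper actually writes.
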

\begin{proof}
Recall that $X=X_{\pi}$ for some partition  $\pi=(J_{1},\dots,J_{k})$ of $[m]$. 
Moreover, $\dim(X_{\pi})=k-2$. Note that $S_{X}$ is a sphere in $X$. We can think of the $k$ blocks of $X$ as the elements $\{1,2,\dots,k\}$. Then the induced cell structure on $S_{X}$ is equivalent to the cell structure on 
the unit sphere in $\R^k$ induced by the braid arrangement.
Therefore, $S_{X}\cong CA_{k-1}$. This proves the lemma. 
\end{proof}



\subsection{Motivation}\label{com}
Our article is motivated by the work of Hu \cite{stablepol} relating the Deligne-Knudson-Mumford compactification to the moduli space of spatial polygons, the work of Kapronov \cite{kapranov1993chow} expressing the aforementioned compactification as an iterative blow up and the work of Devadoss \cite{Devadoss} explaining the relationship over reals using combinatorial arguments. 
We briefly explain some of these ideas here. 

\begin{definition}
Let $\alpha=(\alpha_1,\dots,\alpha_m)$ be a generic length vector.
The spatial polygon spaces is defined as follows:
\[ \mathrm{N}_{\alpha}=\{(v_{1},v_{2},\dots,v_{m})\in (S^{2})^{m} : \displaystyle\sum_{i=1}^{m}\alpha_{i}v_{i} = 0 \}/\mathrm{SO}_{3},\]
where $\mathrm{SO}_{3}$ acts diagonally. 
\end{definition}
The spatial polygon spaces have been studied widely.
For example, the integer cohomology ring of $\mathrm{N}_{\alpha}$ was computed by Hausmann and Knutson in \cite{HK1}.  
 
The moduli space of $m$-punctured Riemann spheres (or the moduli space of genus zero curves) $\M_{0}^{m}$  is an important object in geometric invariant theory. There is the Deligne-Knudson-Mumford compactification $\overline{\M}_{0}^{m}$ of this space which has been studied widely. We refer the reader to  \cite{kapranov1993chow}, \cite{Kapranov1993ThePM}) for comprehensive introduction. 

In \cite{stablepol}, Hu introduced the notion of "stable polygons" (see \cite[Definition 4.13]{stablepol}). Roughly speaking, a \emph{stable} polygon is obtained from the following procedure: Let $P=(v_1,\dots,v_m)$ be a polygon and $J\subset [m]$ such that $v_i=v_j$ for $i,j\in J$. 
That is, sides of $P$ indexed by $J$ are parallel. 
Now introduce a new polygon without parallel edges, all whose sides except the longest one are indexed by $J$. 
The longest side is set to $\sum_{j\in J}\alpha_j-\epsilon$, where $\epsilon$ is a carefully chosen small positive real number. 
Denote this new polygon by $P_J$. 
Follow the same procedure for all sets of parallel sides and obtain such polygons without parallel edges. 
The stable polygon is a tuple of all such newly constructed polygons without parallel sides whose first coordinate is $P$.

Let $\mathcal{Y}$ be the collection of subvarities of $\mathrm{N}_{\alpha}$ defined in \cite[Section 6]{stablepol}.
The following theorem gives a relation between the moduli space of stable polygons $\mathfrak{M}_{\alpha,\epsilon}$, the Deligne-Knudson-Mumford compactification $\overline{\M}_{0}^{m}$ and spatial polygon space $\mathrm{N}_{\alpha}$.
\begin{theorem}[{\cite[Theorem 7.3, Theorem 6.5]{stablepol}}\label{HUTheorem}]
With the above notations
\begin{enumerate}
    \item The moduli space $\mathfrak{M}_{\alpha,\epsilon}$ is a complex manifold biholomorphic to $\overline{\M}_{0}^{m}$.
    \item The space $\mathfrak{M}_{\alpha,\epsilon}$ can be obtained from $\mathrm{N}_{\alpha}$, by iteratively blowing up along the elements of $\mathcal{Y}$.
\end{enumerate}

\end{theorem}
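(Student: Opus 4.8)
The plan is to realize all three spaces as (variations of) geometric invariant theory quotients of $(\mathbb{P}^1)^m$ by $\mathrm{PGL}_2(\mathbb{C})$ and to compare them by wall‑crossing and blow‑up analysis. First I would recall the classical identification, due to Klyachko and to Kapovich--Millson, that for a generic length vector $\alpha$ the spatial polygon space $\mathrm{N}_{\alpha}$ is the GIT quotient $(\mathbb{P}^1)^m /\!\!/_{\alpha}\,\mathrm{PGL}_2(\mathbb{C})$ with linearization $\mathcal{O}(\alpha_1,\dots,\alpha_m)$; genericity of $\alpha$ is precisely the condition that there are no strictly semistable configurations, so $\mathrm{N}_{\alpha}$ is a smooth projective variety of dimension $m-3$. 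On the other side, $\overline{\M}_{0}^{m}$ is the Chow quotient of $(\mathbb{P}^1)^m$ by $\mathrm{PGL}_2(\mathbb{C})$, and by Kapranov's description \cite{kapranov1993chow} it dominates every such GIT quotient; a point of $\overline{\M}_{0}^{m}$ is a stable $m$-pointed genus‑zero curve, i.e.\ a rooted tree of copies of $\mathbb{P}^1$ carrying the $m$ marks. The space $\mathfrak{M}_{\alpha,\epsilon}$ of stable polygons is, by construction, a concrete interpolation between these pictures: a stable polygon is a rooted tree of polygons without parallel sides, each vertex polygon being, after the $\epsilon$-rescaling of its longest side, a configuration of \emph{distinct} points on $S^2\cong \mathbb{P}^1$, hence a smooth rational curve with marks.

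For part~(1), I would define the comparison map $\Phi\colon \mathfrak{M}_{\alpha,\epsilon}\to \overline{\M}_{0}^{m}$ by sending a stable polygon to the stable curve obtained by taking, for each vertex of the defining tree, the smooth rational curve determined by the corresponding parallel‑free polygon, and gluing these curves along the nodes prescribed by the tree. Well‑definedness amounts to checking that the $\epsilon$-normalization makes each vertex polygon non‑degenerate and that the gluing data is canonical; bijectivity is then a combinatorial matching of trees of sub‑polygons with dual trees of stable curves, using that a parallel‑free polygon with generic weights is an honest (stable) configuration of points on $\mathbb{P}^1$. The substantive point is to promote $\Phi$ to a biholomorphism: I would do this by matching universal families and deformation data — the smoothing parameter at each node of a stable curve corresponds, under $\Phi$, to the rescaled longest side that ``opens up'' the corresponding sub‑polygon, so $\Phi$ is a morphism of complex manifolds in local charts. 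Since $\Phi$ is then a bijective morphism onto the smooth (hence normal) projective variety $\overline{\M}_{0}^{m}$, Zariski's main theorem forces it to be an isomorphism, giving the claimed biholomorphism and, in particular, showing $\mathfrak{M}_{\alpha,\epsilon}$ is a complex manifold.

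For part~(2), I would first pin down the family $\mathcal{Y}$: it consists of the subvarieties $Z_{J}\subset \mathrm{N}_{\alpha}$ of polygons whose sides indexed by $J$ are all mutually parallel, as $J$ ranges over the subsets of $[m]$ that are ``short'' for $\alpha$ in the relevant sense, so that $Z_J$ is nonempty, smooth, and of the expected codimension. The key combinatorial step is to verify that $\{Z_J\}$ is a building set in the sense of De Concini--Procesi \cite{dconciniprocesi}: the intersection pattern of the $Z_J$ is governed by a lattice, and the nesting of short subsets satisfies the building‑set axiom; this is where the genericity of $\alpha$ (the inequalities $\sum \pm\alpha_i \neq 0$, also restricted to all subsets) is used. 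Granting this, I would construct the forgetful morphism $\pi\colon \mathfrak{M}_{\alpha,\epsilon}\to \mathrm{N}_{\alpha}$ remembering only the root polygon $P$, observe that $\pi$ is an isomorphism over the complement of $\bigcup_{J} Z_J$, and analyze $\pi$ locally along each stratum: the extra data in a stable polygon supported on a nested family $\mathcal{S}$ of parallel classes is exactly a point of the iterated projectivized normal cone of the chain $Z_{J}$, $J\in\mathcal{S}$, so $\pi$ looks \'etale‑locally like the composition of blow‑ups along those centers. Order‑independence of these blow‑ups, and the fact that performing them along all of $\mathcal{Y}$ in order of increasing dimension yields exactly $\mathfrak{M}_{\alpha,\epsilon}$, then follow from the wonderful‑model formalism of \cite{dconciniprocesi}.

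The main obstacle, in both parts, is the local analytic identification rather than the set‑theoretic bijection: one must show that the $\epsilon$-rescaled sub‑polygon records exactly the blow‑up (equivalently, the deformation) coordinate, as an isomorphism of complex‑analytic germs and not merely a homeomorphism. This forces a careful choice of $\epsilon$ — small enough relative to the finite list of wall inequalities $\sum \pm\alpha_i = 0$ read off from all subsets of $[m]$ — and a bookkeeping of which subsets remain short after passing to a sub‑polygon, where the longest side has been replaced by $\sum_{j\in J}\alpha_j - \epsilon$; this is precisely the inductive data that makes the tree of sub‑polygons match the tower of blow‑ups. A secondary, purely combinatorial, obstacle is checking the building‑set axioms for $\{Z_J\}$, which is routine but must be carried out with the correct notion of ``short'' subset.
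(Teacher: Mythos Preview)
The paper does not prove this theorem at all: it is stated purely as a citation of Hu's results \cite[Theorem 7.3, Theorem 6.5]{stablepol} and serves only as background motivation in \Cref{com}. There is therefore no ``paper's own proof'' to compare against; the authors simply quote the statement and move on to the real analogue they wish to develop.

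Your outline is a reasonable sketch of how one might approach Hu's results, and the ingredients you name (GIT realization of $\mathrm{N}_\alpha$, Kapranov's Chow quotient description of $\overline{\M}_0^m$, building sets and wonderful compactifications) are indeed the circle of ideas in which Hu's paper lives. But since the present paper contains no argument for this theorem, there is nothing here to assess your proposal against; if you want to verify your sketch you will need to consult \cite{stablepol} directly.
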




Recall the definition of configuration space $m$ ordered, distinct points of $\R P^1$ from the Introduction.

\begin{definition}
The real moduli space of $m$-punctured Riemann spheres is  
\[\M_{0}^{m}(\R)=\frac{C_{m}(\R P^{1})}{\mathbb{P}\mathrm{Gl}_{2}(\R)}.\]
\end{definition}

Let $\mathbb{P}(\B_{m-1})$ be the projective braid arrangement in $\mathbb{P}V$ and $\M(\mathbb{P}(\B_{m-1}))$ be its complement.
Let $(\mathbb{P}CA_{m-1})_{\#}$ denote the space obtained from $\mathbb{P}CA_{m-1}$ by iterated blow-ups along the minimal building set of $\mathbb{P}(\B_{m-1})$.

Let  $\overline{\M}_{0}^{m+1}(\R)$ be the real points of the Deligne-Mumford-Knudson compactification $\overline{\M}_{0}^{m+1}$.
Kapranov \cite{ Kapranov1993ThePM} remarkably proved the following .  
\begin{theorem}\label{kapra}
With the above notations
\begin{enumerate}
    \item There are homeomorphisms  $\M(\mathbb{P}(\B))\cong\M_{0}^{m+1}(\R)$ and  $\overline{\M}_{0}^{m+1}(\R)\cong (\mathbb{P}CA_{m-1})_{\#}$
    
    \item The real moduli $\overline{\M}_{0}^{m+1}(\R)$ is tiled by associahedra.
\end{enumerate}
\end{theorem}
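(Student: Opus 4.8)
The plan is to prove part (1) in two stages — first the open piece, then the compactification — and to extract part (2) from the combinatorics of the blow-up restricted to a chamber.

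\emph{The open identification.} Since $\mathbb{P}\mathrm{Gl}_2(\R)$ acts simply transitively on ordered triples of distinct points of $\R P^1$, one normalises one of the $m+1$ marked points to $\infty$; the remaining $m$ points then lie in $\R P^1\setminus\{\infty\}\cong\R$, are pairwise distinct, and are considered up to the residual affine group $x\mapsto ax+b$ with $a\in\R^{\times}$. Quotienting $\{(x_1,\dots,x_m)\in\R^m\mid x_i\neq x_j \text{ for } i\neq j\}$ by translations lands in $V=\{\sum x_i=0\}$ with the points still distinct, i.e.\ in the complement of $\B_V$; quotienting further by the scaling $\R^{\times}$ identifies antipodes and yields precisely $\mathbb{P}V$ minus the projective braid arrangement, that is $\M(\mathbb{P}(\B))$. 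Independence of which point was sent to $\infty$ is the usual cross‑ratio check. This gives $\M(\mathbb{P}(\B))\cong\M_{0}^{m+1}(\R)$.

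\emph{The compactification.} I would invoke the realisation of $\overline{\M}_{0}^{m+1}$ as the De Concini--Procesi wonderful model \cite{dconciniprocesi} of the projective braid arrangement with respect to its minimal building set $\mathrm{Min}(\mathbb{P}(\B_{m-1}))$ — equivalently, Kapranov's description \cite{kapranov1993chow} of $\overline{\M}_{0}^{m+1}$ as the iterated blow‑up of $\mathbb{P}^{m-2}$ along the subspaces indexed by that building set, performed in order of increasing dimension. Passing to real points commutes with this construction, since each stage is a blow‑up along a smooth subvariety defined over $\R$ with nonempty real locus; hence $\overline{\M}_{0}^{m+1}(\R)$ is the iterated \emph{real} blow‑up of $\mathbb{P}V(\R)$ along the (proper transforms of the) elements of $\mathrm{Min}(\mathbb{P}(\B_{m-1}))$. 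Now $\mathbb{P}V(\R)$ carries exactly the cell structure of $\mathbb{P}CA_{m-1}$ induced by the braid hyperplanes, and a real blow‑up along a linear subspace that is a union of cells replaces a regular neighbourhood of that subcomplex by the projectivised normal‑sphere bundle in a way that refines the cell structure; iterating along the whole minimal building set is by definition the passage to $(\mathbb{P}CA_{m-1})_{\#}$. One has to check that the blow‑ups at a fixed dimension commute (De Concini--Procesi) and that the refinements glue consistently across charts; here \Cref{icc}, which identifies each blown‑up stratum with a lower Coxeter complex, keeps the bookkeeping under control. This yields $\overline{\M}_{0}^{m+1}(\R)\cong(\mathbb{P}CA_{m-1})_{\#}$.

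\emph{The associahedral tiling.} The top cells of $CA_{m-1}$ are the $m!$ Weyl chambers, each a geometric $(m-2)$‑simplex whose proper faces match the coordinate subspaces of the braid arrangement. Restricting the iterated blow‑up to one chamber is precisely truncating its faces, in order of increasing dimension, along the traces of the elements of $\mathrm{Min}(\mathbb{P}(\B_{m-1}))$, and this sequence of truncations of a simplex is the classical inductive construction of the Stasheff associahedron $K_m$; the nested set complex of the minimal building set restricted to a chamber is the face poset of $K_m$ (cf.\ \cite{Devadoss}). Passing to the projective quotient pairs antipodal chambers, leaving $m!/2$ associahedral tiles. I expect the main obstacle to be the middle step: making rigorous that the algebro‑geometric iterated blow‑up defining $\overline{\M}_{0}^{m+1}$, after taking real points, coincides cell‑by‑cell with the topological modification $(\mathbb{P}CA_{m-1})_{\#}$ — in particular controlling the ordering and commutativity of the blow‑ups and checking that real points are taken compatibly at every stage. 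Once that is in place, the associahedral description of the chambers is a combinatorial verification via the De Concini--Procesi nested set complex.
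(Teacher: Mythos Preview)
The paper does not prove this theorem at all: it is stated as a known result of Kapranov, with a citation to \cite{Kapranov1993ThePM}, and serves purely as motivation for the paper's own work on planar polygon spaces. There is therefore no ``paper's own proof'' to compare against.

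Your sketch is a reasonable outline of how the result is established in the literature --- the open identification via sending a point to $\infty$ and quotienting by affine transformations, the compactification via the De Concini--Procesi wonderful model with respect to the minimal building set, and the associahedral tiling via iterated truncation of simplices (as in Devadoss \cite{Devadoss}) --- but for the purposes of this paper no proof is expected or needed, and you should simply cite Kapranov.
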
 


It is known that for a generic $\alpha$, the polygon space $\mbalpha$ contains $\M_{0}^{n}(\R)$ as an open dense set. 
In particular, $\mbalpha$ form a compactification of $\M_{0}^{n}(\R)$( see \cite{KL}, \cite{MJ} and \cite{pan} for more details).
Therefore, it is natural to ask the following questions.
\begin{question}
Is there a real version of \Cref{HUTheorem}?
\end{question}
\begin{question}
 Is there an analogue of \Cref{kapra} for planar polygon spaces?
\end{question}

By a theorem of Hausmann \cite[Proposition 2.9]{geohausmann} it follows that the planar polygon spaces are related by iterated surgery. 
Using a suitable cell structure (first defined by Panina \cite{pan}) we show that the surgery operation can be defined at the level of CW complexes and the genetic code of the given length vector can be used to keep track of how these spaces change.

\section{Planar polygon spaces}
This section is devoted to planar polygon spaces; we define genetic codes and prove some results.
In the end, we introduce a collection of codimension-$1$ submanifolds that form a submanifold arrangement and a induced cell structure on $\malpha$ and $\mbalpha$. 
This cell structure coincides with the one that Panina described in \cite{pan}.

We denote the set $ \{1,\dots, m\}$ by $[m]$. 
There are two important combinatorial objects associated with the length vector $\alpha=(\alpha_{1},\alpha_{2},\dots,\alpha_{m})$.
\begin{definition}
A subset $I\subset [m]$ is called $\alpha$-short  if 
\[\sum_{i\in I} \alpha_i  < \sum_{j \not \in I} \alpha_j\]  and $\alpha$-long otherwise.
\end{definition}
We may write short for $\alpha$-short when the context is clear. The collection of short subsets may be very large. There is another combinatorial object associated with length vectors which further compactifies the  the short subset data. 
Note that the diffeomorphism type of a planar polygon spaces does not depend on the ordering of the side lengths of polygons. Therefore, we assume that the length vector  satisfies $\alpha_{1}\leq \alpha_{2} \leq \dots\leq \alpha_{m}$. 

\begin{definition}\label{gc}
For a length vector $\alpha$, consider the collection of subsets of $[m]$ :
\[ S_{m}(\alpha) := \{J\subset [m] : \text{ $m\in J$ and $J$ is short}\} \]
and a partial order $\leq$ on $S_{m}(\alpha)$ by $I\leq J$ if $I=\{i_{1},\dots,i_{t}\}$ and $\{j_{i},\dots,j_{t}\}\subseteq J$ with $i_{s}\leq j_{s}$ for $1\leq s\leq t$. 
The \emph{genetic code} of $\alpha$ is the set of maximal elements of $S_{m}(\alpha)$ with respect to this partial order. 
\end{definition}
If $A_{1}, A_{2},\dots, A_{k}$  are the maximal elements of $S_{m}(\alpha)$ with respect to $\leq$ then the genetic code of $\alpha$ is denoted by $\la A_{1},\dots,A_{k}\ra$; we will use the notation $G(\alpha)$ when it is convenient to do so.

\begin{example}
Let $\alpha=(1,\dots,1,m-2)$ ($m$-tuple) be a length vector. Then the genetic code of $\alpha$ is $\la m\ra$. Moreover, $\malpha\cong S^{m-3}$ and $\mbalpha\cong \R P^{m-3}$.
\end{example}
For a generic length vector $\alpha$, consider the collection of all short subsets \[S(\alpha) :=\{J\subset [m] \mid J \text{ is $\alpha$-short}\}.\] 
\begin{theorem}[{\cite[Lemma 4.2]{HausmannRidriguez}}]
For a generic length vector $\alpha$,
the collection $S(\alpha)$ is determined by $S_m(\alpha)$. Moreover, $S(\alpha)$ can also be reconstructed from the genetic code of $\alpha$.
\end{theorem}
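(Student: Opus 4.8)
The plan is to establish both assertions by a single inductive argument on the size of the short subsets, using the genetic code as the organizing combinatorial gadget. First I would recall the basic structural fact (essentially the content of \cite[Lemma 4.2]{HausmannRidriguez}) that $S_m(\alpha)$—short subsets containing the top index $m$—encodes all of $S(\alpha)$: if $J \subset [m]$ is any short subset, then either $m \in J$, in which case $J \in S_m(\alpha)$ directly, or $m \notin J$, in which case one checks that the complementary membership is controlled by comparing $J$ to the short subsets containing $m$. Concretely, since $\alpha_1 \le \cdots \le \alpha_m$, a subset $J$ with $m\notin J$ is short if and only if $J \cup \{m\} \setminus \{j\}$ is short for the appropriate index, and this reduction terminates. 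So the first step is to spell out the exact reconstruction map $S_m(\alpha) \rightsquigarrow S(\alpha)$, checking that shortness of an arbitrary $J$ is decided by membership of a canonically associated element of $S_m(\alpha)$.

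Second, I would show $S_m(\alpha)$ is recoverable from its set of maximal elements under $\le$, i.e. from the genetic code $\langle A_1,\dots,A_k\rangle$. This is the easier half: $S_m(\alpha)$ is a down-set (an order ideal) in the poset $(\{J : m\in J\}, \le)$, because if $J$ is short and $I \le J$ then $I$ is "componentwise no larger" hence $\sum_{i\in I}\alpha_i \le \sum_{j\in J}\alpha_j < \sum_{\text{rest}}\alpha_j$, and one must also check that the complement sum only increases—this uses monotonicity of $\alpha$ together with the fact that $I$ and $J$ have the same cardinality in the definition of $\le$ (both are $t$-element sets $\{i_1<\cdots<i_t\}$, $\{j_1<\cdots<j_t\}$). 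Since any finite poset ideal is the union of the principal ideals generated by its maximal elements, $S_m(\alpha) = \bigcup_{s=1}^k \{I : I \le A_s\}$, which is manifestly determined by the genetic code.

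Third, I would compose: $S(\alpha)$ is determined by $S_m(\alpha)$ (step one) which is determined by the genetic code (step two), giving the theorem. I expect the main obstacle to be step one—the precise bookkeeping showing that shortness of a subset \emph{not} containing $m$ is detected by the short-subsets-containing-$m$ data. The subtlety is that replacing an index by $m$ (the largest) makes a set "more long," so one needs the replacement to go the other direction, and one must verify that the resulting comparison is well-defined and that genericity ($\sum \pm\alpha_i \neq 0$) is exactly what prevents a subset from being simultaneously "on the boundary." I would handle this by observing that for $J$ with $m \notin J$, $J$ is short iff $[m]\setminus J$ is long iff (since $m \in [m]\setminus J$) $[m]\setminus J \notin S_m(\alpha)$ is false—i.e. $J$ short $\iff [m]\setminus J \notin S(\alpha)$, and $[m]\setminus J$ contains $m$ so its status is read off directly from $S_m(\alpha)$; genericity guarantees no subset and its complement are both short, so this dichotomy is clean. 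Once that equivalence is in hand, the reconstruction is immediate and the two displayed claims follow formally.
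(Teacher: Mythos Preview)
The paper does not give its own proof of this theorem; it is stated with a citation to \cite[Lemma 4.2]{HausmannRidriguez} and no argument is supplied. So there is nothing in the paper to compare your proposal against.

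That said, your proposal is correct and is essentially the standard argument. Your ``step one'' is the clean way to do it: for $J$ with $m\notin J$, genericity gives $J\in S(\alpha)\iff J^{c}\notin S(\alpha)$, and since $m\in J^{c}$ this is decided by $S_{m}(\alpha)$. Your ``step two'' is also right: $S_{m}(\alpha)$ is an order ideal for $\le$ (using $\alpha_{1}\le\cdots\le\alpha_{m}$ and the fact that subsets of short sets are short), hence is the union of the principal ideals below the genes. One small correction: in the definition of $\le$ the sets $I$ and $J$ need not have the same cardinality---one has $|I|=t$ and $\{j_{1},\dots,j_{t}\}\subseteq J$, so $|I|\le |J|$ in general. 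This does not affect your argument, since passing from $J$ to a $t$-element subset only decreases the sum, and then the coordinatewise inequality $i_{s}\le j_{s}$ together with monotonicity of $\alpha$ finishes the job.
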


Observe that the partial order defined above doesn't depend on the length vector. In particular, this partial order remains a partial order on the set of all subsets of $[m]$ containing $m$. This fact will help us to introduce the partial order on the collection of genetic codes. 
\begin{definition}\label{genord}
Let $G(\alpha) = \la A_{1},\dots,A_{k}\ra$ and $G(\beta) = \la B_{1},\dots,B_{l}\ra$ be two genetic codes. We say that
\[\la A_{1},\dots,A_{k}\ra \preceq \la B_{1},\dots,B_{l}\ra\] if for each $1\leq i \leq k$ there exist $1\leq j\leq l$ such that  $A_{i}\leq B_{j}$. 
We call this partial order the genetic order.
\end{definition}

\begin{remark}\label{remgen}
Since $G(\alpha)$ consists of maximal elements of the poset $(S_m(\alpha),\leq)$, it completely determines the set $S_{m}(\alpha)$. 
Therefore, it follows from \cite[Lemma 1.2]{geohausmann} that if the genetic code of length vectors $\alpha$ and $\beta$ are same then the corresponding planar polygon spaces are diffeomorphic. 
Moreover, $G(\alpha)\preceq G(\beta)$ is equivalent to $S_m(\alpha) \subseteq S_m(\beta)$. 
Finally, in \Cref{genord} we may have $l\leq k$, for example, $\la 126,36\ra\preceq \la136\ra$. 
\end{remark}



\subsection{Saturated chains}
{Recall that in a poset $(P, \leq)$ an element $y$ is said to \emph{cover} another element $x$ if $x\leq z\leq y$ implies either $z=x$ or $z=y$. 
A \emph{saturated chain} is a totally ordered subset $C$ if there does not exist $z\in P\setminus C$ such that $x\leq z\leq y$ for some $x,y\in C$ and that $C\cup\{z\}$ is a chain.}

We now characterize saturated chains in the poset of genetic codes.
In particular, we first show that if the collection $S_{m}(\beta)$ is obtained by adding just one element to the collection $S_{m}(\alpha)$ then the code $G(\beta)$ covers the code $G(\alpha)$ in the genetic order.

\begin{proposition}\label{crgc}
The genetic code $G(\beta)$ covers $G(\alpha)$, {denoted $G(\alpha)\precdot G(\beta)$}, if and only if $S_{m}(\beta)=S_{m}(\alpha)\cup \{J\}$ for some $J\subset[m]$. 
\end{proposition}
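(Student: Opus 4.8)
The plan is to prove both implications by working with the reformulation from \Cref{remgen}, namely that $G(\alpha)\preceq G(\beta)$ is equivalent to $S_m(\alpha)\subseteq S_m(\beta)$, together with the fact that a genetic code is precisely the antichain of maximal elements of $(S_m(\cdot),\leq)$ and hence is determined by, and determines, the downward-closed set $S_m(\cdot)$.

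For the easy direction, suppose $S_m(\beta)=S_m(\alpha)\cup\{J\}$ with $J\notin S_m(\alpha)$. Then $S_m(\alpha)\subsetneq S_m(\beta)$, so $G(\alpha)\prec G(\beta)$. To see this is a covering relation, I would argue that any genetic code $G(\gamma)$ with $G(\alpha)\preceq G(\gamma)\preceq G(\beta)$ satisfies $S_m(\alpha)\subseteq S_m(\gamma)\subseteq S_m(\alpha)\cup\{J\}$; since $S_m(\gamma)$ must be a genuine lower set under $\leq$ (being the set of short subsets containing $m$, it is closed under $\leq$ — a fact that follows because $I\leq J$ and $J$ short forces $I$ short), it equals either $S_m(\alpha)$ or $S_m(\alpha)\cup\{J\}$, hence $G(\gamma)=G(\alpha)$ or $G(\gamma)=G(\beta)$. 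One subtlety to check: that $S_m(\alpha)\cup\{J\}$ is actually realized as $S_m(\beta)$ for some \emph{generic} length vector $\beta$ is given to us by hypothesis, so we need not worry about realizability here.

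For the converse, suppose $G(\alpha)\precdot G(\beta)$; then $S_m(\alpha)\subsetneq S_m(\beta)$ and I must show the difference is a single element. Pick any $J\in S_m(\beta)\setminus S_m(\alpha)$ that is \emph{minimal} in $(S_m(\beta)\setminus S_m(\alpha),\leq)$, and consider the set $T:=S_m(\alpha)\cup\{K\in S_m(\beta): K\leq J\}$. The key point is that $T$ is a lower set in $\big(\{K\subseteq[m]: m\in K\},\leq\big)$: it is a union of the lower set $S_m(\alpha)$ with the principal ideal of $J$ intersected with the lower set $S_m(\beta)$, and minimality of $J$ guarantees no elements of $S_m(\beta)\setminus S_m(\alpha)$ other than $J$ itself slip in below $J$. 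Hence $T$ is itself of the form $S_m(\gamma)$ for a suitable length vector $\gamma$ — here I would invoke the characterization of which lower sets arise as $S_m$ of a generic length vector, or, more cheaply, use that $T\subseteq S_m(\beta)$ and pass to the genetic code directly via \Cref{remgen} and \cite[Lemma 1.2]{geohausmann}. Then $G(\alpha)\preceq G(\gamma)\preceq G(\beta)$, and by the covering hypothesis either $S_m(\gamma)=S_m(\alpha)$ — impossible since $J\in T\setminus S_m(\alpha)$ — or $S_m(\gamma)=S_m(\beta)$, which forces $S_m(\beta)=T=S_m(\alpha)\cup\{K: K\leq J, K\in S_m(\beta)\}$. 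A final short argument then shows this set can exceed $S_m(\alpha)$ by only one element: if $K\leq J$, $K\in S_m(\beta)\setminus S_m(\alpha)$, and $K\neq J$, then by minimality of $J$ we would contradict $J$ being minimal unless $K$ is incomparable to... — more carefully, any such $K$ lies strictly below $J$, contradicting the minimal choice of $J$ in $S_m(\beta)\setminus S_m(\alpha)$. Hence $S_m(\beta)\setminus S_m(\alpha)=\{J\}$.

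The main obstacle I anticipate is the realizability issue lurking in the converse: showing that the intermediate lower set $T$ actually equals $S_m(\gamma)$ for some \emph{generic} length vector $\gamma$. The cleanest route is probably to avoid constructing $\gamma$ at all and instead phrase the entire covering argument intrinsically in the poset of \emph{down-closed antichains} of subsets containing $m$ that are realizable as genetic codes, using \Cref{remgen} to translate covering in the genetic order into "adding one element to $S_m$". If the realizable lower sets are not all lower sets, one needs the combinatorial fact (traceable to Hausmann--Rodriguez, cited above) that adding a single $\leq$-maximal-feasible element to a realizable $S_m(\alpha)$ keeps it realizable; I would isolate this as the technical heart of the proof.
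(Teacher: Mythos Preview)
Your proof is correct and follows essentially the same route as the paper: both directions are handled by translating the genetic order into inclusion of the sets $S_m(\cdot)$ (via \Cref{remgen}) and then constructing an intermediate genetic code by adjoining a single element of $S_m(\beta)\setminus S_m(\alpha)$. The paper's version simply picks $J_1\in S_m(\beta)\setminus S_m(\alpha)$ and lets $G'$ be ``the genetic code obtained by adding $J_1$ as a gene to $G(\alpha)$''; your choice of a \emph{minimal} such $J$ is in fact the cleaner move, since it guarantees $S_m(\alpha)\cup\{J\}$ is already a lower set and that any second element $J_2$ of the difference is excluded from $S_m(G')$.

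The realizability obstacle you flag is not actually present in the paper's framework. As the later remark on $\la 2469\ra$ makes explicit, the poset under consideration is that of \emph{abstract} genetic codes (equivalently, suitable lower sets of subsets containing $m$), not only those arising from a length vector. Hence the intermediate code $G'$ (your $G(\gamma)$) need not be realized by any $\gamma$ to witness a failure of covering; one simply takes the antichain of $\leq$-maximal elements of $T=S_m(\alpha)\cup\{J\}$ as the genetic code directly. With that reading, your ``more cheaply, pass to the genetic code directly'' option is exactly what the paper does, and the technical heart you anticipated dissolves.
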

\begin{proof}
Let $G(\beta)$ cover $G(\alpha)$.
Hence the collection $S_m(\alpha)$ is a subcollection of $S_m(\beta)$.
On the contrary, assume that there exists $J_1, J_2\in S_m(\beta)\setminus S_m(\alpha)$. 
{Let $G'$ denote the genetic code obtained by adding $J_1$ as a gene to $G(\alpha)$. }
Then we have $G(\alpha) \preceq G' \preceq G(\beta)$.
This is a contradiction to the fact that $G(\beta)$ covers $G(\alpha)$.
Therefore, we have $S_{m}(\beta)=S_{m}(\alpha)\cup \{J\}$ for some $J\subset[m]$.

Now we prove the converse by contradiction. 
Let there be a genetic code $H$ such that $G(\alpha)\preceq H\preceq G(\beta)$.
Then, there is a gene $I_1$ of $H$ which is not a part of $G(\alpha)$ and another gene $I_2$ of $G(\beta)$ which is not a part of $H$ (see \Cref{remgen}).
This implies, $I_1, I_2 \in S_m(\beta)\setminus S_m(\alpha)$, a contradiction. 
This concludes the proposition.
\end{proof}


\begin{remark}\label{shortalpha}
Recall from \cite[Lemma 4.1]{HausmannRidriguez} that the collection $S(\alpha)$ is determined by $S_m(\alpha)$.
Suppose $S_{m}(\beta)=S_{m}(\alpha)\cup \{J\}$ for some $J\subset[m]$.
Let $J'< J$ with $m\in J'$. Note that $J'\in S_{m}(\beta)$. Since $S_{m}(\beta)=S_{m}(\alpha)\cup \{J\}$, $J'\in S_{m}(\alpha)$. Consequently, $S_{m}(\alpha)$ generates all $\beta$-short subsets except $J$. Therefore, $S(\beta)= (S(\alpha)\setminus \{J^c\})\cup \{J\}$. 
\end{remark}

Now we express the covering relation at the level of genetic codes and then show how to construct saturated chains. 

\begin{proposition}
Let $\Tilde{G}$ and $G$ be two monogenic codes of the same size such that $S_m(\Tilde{G})=S_m(G)\cup J$ for some $J\subset [m]$. Then $\Tilde{G}=\la J\ra$.     
\end{proposition}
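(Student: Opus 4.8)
The statement asserts that if two monogenic codes $\Tilde{G}$ and $G$ have the same number of genes (here "monogenic" and "same size" together force both to have exactly one gene each, so $\Tilde G=\la \Tilde J\ra$ and $G=\la J_0\ra$ for single subsets $\Tilde J, J_0\subseteq[m]$ containing $m$) and $S_m(\Tilde G)=S_m(G)\cup\{J\}$ for some $J\subset[m]$, then the new gene $J$ must itself be the unique gene of $\Tilde G$, i.e.\ $\Tilde G=\la J\ra$. The plan is to argue that $J$ is a maximal element of $S_m(\Tilde G)=S_m(G)\cup\{J\}$ and that it is the \emph{only} maximal element, hence equals the genetic code $\Tilde G$.

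First I would record the basic structure: since $G=\la J_0\ra$ is monogenic, $S_m(G)=\{I\subseteq[m] : m\in I,\ I\leq J_0\}$ is the principal order ideal generated by $J_0$ in the poset $(\{I : m\in I\},\leq)$ of Definition~\ref{gc}. Adding the single element $J$ to this ideal must again yield an order ideal, because $S_m(\Tilde G)$ is the down-set of short subsets containing $m$ (this is exactly \Cref{shortalpha}: every $J'<J$ with $m\in J'$ already lies in $S_m(\Tilde G)$, and since it is not the newly added element it lies in $S_m(G)$). So $S_m(G)\cup\{J\}$ is an order ideal with exactly one more element than the principal ideal $\downarrow\! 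J_0$. The key combinatorial observation is then: an order ideal obtained from a principal ideal $\downarrow\! J_0$ by adjoining a single new element $J$ must be the principal ideal $\downarrow\! J$ itself. Indeed $J_0 < J$ (since $J\notin\downarrow\! J_0$ forces $J\not\leq J_0$, and for $J$'s addition to keep the set a down-set with $J_0$ still inside, the only possibility consistent with a one-element enlargement is $J_0<J$ with $J$ covering $J_0$; anything strictly between would add more than one element). Therefore $\downarrow\! J = \downarrow\! J_0\cup\{J\}=S_m(\Tilde G)$, so $J$ is the unique maximal element of $S_m(\Tilde G)$, which by Definition~\ref{gc} means $\Tilde G=\la J\ra$.

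Concretely the steps are: (i) unwind the definitions to see both codes are singletons and $S_m(G)=\downarrow\! J_0$; (ii) invoke \Cref{shortalpha} to conclude $S_m(\Tilde G)$ is still a down-set, so every $J'\leq J$ with $m\in J'$ lies in $S_m(G)=\downarrow\! J_0$, forcing $J'\leq J_0$ for all such $J'\neq J$; (iii) deduce from this, together with $J\not\leq J_0$, that in fact $J$ covers $J_0$ and $\downarrow\! J=\downarrow\! J_0\cup\{J\}$; (iv) conclude $J$ is the unique maximal element of $S_m(\Tilde G)$ and hence $\Tilde G=\la J\ra$.

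The main obstacle I anticipate is step (iii): carefully justifying that adjoining a single new element to a principal down-set forces that element to be the new top, rather than, say, a new element incomparable to $J_0$ that happens to have all its proper predecessors already below $J_0$. One must rule out the incomparable case — but if $J$ were incomparable to $J_0$, then $J_0$ would remain maximal in $S_m(\Tilde G)$ alongside $J$, making $\Tilde G$ have two genes, contradicting monogenicity; this is where the hypothesis that $\Tilde G$ is monogenic does its work. Pinning down precisely why there can be \emph{at most one} new maximal element, and that it must dominate the old unique gene, is the crux; everything else is a direct translation of Definition~\ref{gc} and \Cref{shortalpha}.
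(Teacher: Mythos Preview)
Your argument is correct and essentially the same as the paper's: both show that the new element $J$ must dominate the old gene $J_0$ (otherwise $S_m(\Tilde G)$ would have two incomparable maximal elements, contradicting monogenicity of $\Tilde G$), giving $\Tilde G=\la J\ra$; the paper simply routes this through the covering relation via \Cref{crgc} and the sandwich $G\preceq\la S,J\ra\preceq\Tilde G$ rather than working directly with principal order ideals. One small remark: ``same size'' in the hypothesis means the single genes have equal cardinality, not that both codes have one gene (that is already what ``monogenic'' says)---but neither your proof nor the paper's actually uses this hypothesis.
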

\begin{proof}
First we note that $J$ contains $m$.    
Suppose $G=\la S\ra$ where $S\subset [m]$ containing $m$. Then we have $G\leq \la S, J\ra \leq \Tilde{G}$.
\Cref{crgc} imples $\Tilde{G}$ covers  $G$. Therefore, $\Tilde{G}=\la S, J\ra$. But $\Tilde{G}$ is monogenic covering $G$. Thus $\Tilde{G}=\la J\ra$.
\end{proof}


\begin{proposition}\label{prop: monogen}
Let $G=\la \{g_1,\dots,g_r,m\}\ra$ and $\Tilde{G}=\la \{h_1,\dots,h_r,m\}\ra$ be two genetic codes. Then $\Tilde{G}$ covers $G$ if $h_i=g_1+i$ for $i\geq 1$ and $g_i=g_1+i$ for $i\geq 2$.    
\end{proposition}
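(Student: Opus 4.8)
The plan is to verify directly that under the stated numerical hypotheses the two genetic codes are comparable, that the short-subset collections differ by exactly one element, and then to invoke \Cref{crgc}. First I would unwind the hypotheses: writing $a = g_1$, the gene of $G$ is $A = \{a, a+2, a+3, \dots, a+r, m\}$ (since $g_i = g_1 + i$ for $i \geq 2$, but $g_1 = a$ is \emph{not} of that form), while the gene of $\Tilde G$ is $B = \{a+1, a+2, \dots, a+r, m\}$ (here $h_i = g_1 + i$ for all $i \geq 1$). Componentwise these satisfy $a < a+1$ and $a+i = a+i$ for the remaining entries and $m = m$, so $A \leq B$ in the order of \Cref{gc}, hence $G \preceq \Tilde G$ in the genetic order; in particular $S_m(G) \subseteq S_m(\Tilde G)$ by \Cref{remgen}.

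Next I would identify precisely the elements of $S_m(\Tilde G) \setminus S_m(G)$. An element $K$ with $m \in K$ lies in $S_m(\Tilde G)$ iff $K \leq B$, and in $S_m(G)$ iff $K \leq A$. Since $A$ and $B$ agree in all positions except the first, where $A$ has $a$ and $B$ has $a+1$, the only subsets $K \leq B$ that fail $K \leq A$ are those whose first (smallest) element equals $a+1$ and which otherwise match the tail of $B$; a short case analysis on the entries of $K$ shows the unique such $K$ is $B$ itself. Thus $S_m(\Tilde G) = S_m(G) \cup \{B\}$, and \Cref{crgc} immediately gives $G \precdot \Tilde G$, i.e.\ $\Tilde G$ covers $G$.

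The step I expect to be the main obstacle is the combinatorial bookkeeping in the middle paragraph: showing that $B$ is the \emph{only} element added. One must be careful that a subset $K$ with $m \in K$, $\min K = a+1$, and $K \setminus \{m\} \subseteq \{a+1, \dots, a+r\}$ that is properly smaller than $B$ in the componentwise order actually also sits below $A$ — this uses that such a $K$ has strictly fewer than $r$ non-$m$ elements or has a "slack" entry, so one can shift to compare with the $a$ in $A$. It may be cleaner to argue contrapositively: if $K \leq B$ and $K \not\leq A$, list $K = \{k_1 < \dots < k_t < m\}$ and $A = \{a < a+2 < \dots < a+r < m\}$, $B = \{a+1 < \dots < a+r < m\}$; since $K \leq B$ we have $t \leq r$ and $k_s \leq$ (the $s$-th entry of $B$), and if $t < r$ or $k_1 \geq a+2$ one checks $K \leq A$ directly, forcing $t = r$ and $k_1 = a+1$, which together with $K \leq B$ pins down $k_s = a + s$ for all $s$, i.e.\ $K = B$. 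Once this uniqueness is nailed down, the proof closes by citing \Cref{crgc}.
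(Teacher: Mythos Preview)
Your argument is correct. The paper does not actually print a proof of this proposition: the stated direction is left as evident, in the same spirit as the one-line proof of \Cref{prop: coveringrel} (compute $S_m$ and cite \Cref{crgc}), and the remark afterwards only addresses the unstated converse. Your verification---showing $S_m(\tilde G)=S_m(G)\cup\{B\}$ via a case split on $t=|K\setminus\{m\}|$ and then invoking \Cref{crgc}---is exactly the expected route and fills in the omitted details.

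One minor correction in your final paragraph: for $t<r$ the relation $K\leq B$ yields $k_s\leq a+r-t+s$ (match against the top $t$ elements of $B\setminus\{m\}$), not ``$k_s\leq$ the $s$-th entry of $B$''. This does not affect the conclusion, since for $t\leq r-1$ the top $t$ elements of $A\setminus\{m\}$ and of $B\setminus\{m\}$ coincide (both equal $\{a+r-t+1,\dots,a+r\}$), so $K\leq A$ follows. The clause ``or $k_1\geq a+2$'' is then redundant: under $K\leq B$ it already forces $t<r$, because any dominating $t$-subset of $B\setminus\{m\}$ avoiding $a+1$ has at most $r-1$ elements. The case $t=r$ closes exactly as you wrote: $k_1\leq a+1$ from $K\leq B$, $k_1>a$ from $K\not\leq A$, and then $k_s=a+s$ by induction.
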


\begin{proposition}\label{prop: gen covering}
Let $G=\la \{g_1,\dots,g_r,m\}\ra$ and $\Tilde{G}=\la \{h_1,\dots,h_{r+1},m\}\ra$ be two genetic codes. Then $\Tilde{G}$ covers $G$  if $h_i=i$ for $1\leq i\leq r+1$ and $g_j=j+1$ for $1\leq j\leq r$.
\end{proposition}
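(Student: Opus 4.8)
The plan is to reduce the statement to the covering criterion established in \Cref{crgc}: it suffices to verify that under the stated hypotheses one has $S_m(\Tilde G) = S_m(G)\cup\{J\}$ for a single short subset $J$. First I would record the two monogenic codes explicitly. Under the hypotheses $g_j = j+1$ for $1\le j\le r$, so $G = \la\{2,3,\dots,r+1,m\}\ra$, while $\Tilde G = \la\{1,2,\dots,r+1,m\}\ra$. Both are monogenic, and the generator of $\Tilde G$ is obtained from that of $G$ by adjoining the element $1$; in particular every subset dominated (in the sense of \Cref{gc}) by $\{2,\dots,r+1,m\}$ is also dominated by $\{1,\dots,r+1,m\}$, so $G\preceq \Tilde G$ and hence $S_m(G)\subseteq S_m(\Tilde G)$ by \Cref{remgen}.

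The key step is to identify $S_m(\Tilde G)\setminus S_m(G)$ and show it is a singleton. An element of $S_m(\Tilde G)$ is a subset $K\subseteq[m]$ with $m\in K$ and $K\le \{1,2,\dots,r+1,m\}$; writing $K=\{k_1<\dots<k_t\}$ with $k_t=m$, the domination condition forces $t\le r+2$, and if $t=r+2$ then necessarily $k_s\le s$ for $1\le s\le r+1$, which together with $k_1\ge 1$ pins down $K=\{1,2,\dots,r+1,m\}$ itself. If instead $t\le r+1$, then $K$ also satisfies $K\le\{2,3,\dots,r+1,m\}$: indeed one can align the $t-1$ smallest elements of $K$ against the last $t-1$ of the chain $2,3,\dots,r+1$ and check $k_s \le (r+2-t)+s$ for $1\le s\le t-1$, using that the chain $1,2,\dots,r+1$ already dominates $K$ shifted appropriately. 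Hence every $K\in S_m(\Tilde G)$ with fewer than $r+2$ elements already lies in $S_m(G)$, and the only new element is $J = \{1,2,\dots,r+1,m\}$. Therefore $S_m(\Tilde G) = S_m(G)\cup\{J\}$, and \Cref{crgc} gives that $\Tilde G$ covers $G$.

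One technical point deserves care: the argument above is combinatorial and does not reference genericity or realizability of the length vectors, but \Cref{crgc} is phrased in terms of genetic codes $G(\alpha), G(\beta)$ of actual length vectors. I would therefore note at the outset that for any monogenic code $\la A\ra$ with $m\in A$ there is a generic length vector realizing it (for instance a suitable perturbation of the vector with all weights equal on $[m]\setminus A$ versus $A$), so the combinatorial poset of genetic codes coincides with the poset arising from length vectors, and \Cref{crgc} applies verbatim. The same realizability remark handles the two-generator code $G' = \la\{2,\dots,r+1,m\},\{1,\dots,r+1,m\}\ra$ used implicitly when checking no code sits strictly between $G$ and $\Tilde G$.

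The main obstacle I anticipate is the bookkeeping in the domination inequalities: showing that a $t$-element subset $K$ dominated by $\{1,\dots,r+1,m\}$ with $t\le r+1$ is automatically dominated by $\{2,\dots,r+1,m\}$ requires choosing the right injection of index sets and tracking the off-by-one shifts carefully. Everything else — the reduction to \Cref{crgc}, the realizability remark, and the identification of the unique new gene $J=\{1,\dots,r+1,m\}$ — is routine once that inequality is in hand.
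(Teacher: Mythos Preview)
Your approach is correct and is precisely the method the paper uses for the adjacent \Cref{prop: coveringrel}: exhibit the single subset by which $S_m(\Tilde G)$ exceeds $S_m(G)$ and invoke \Cref{crgc}. The paper in fact states \Cref{prop: gen covering} without proof, so your argument supplies what is omitted there.

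Two minor remarks. First, the realizability caveat is unnecessary: the proof of \Cref{crgc} is purely order-theoretic and applies to abstract genetic codes, a point the paper itself makes later when discussing short-subset systems. Second, the ``two-generator code'' $G' = \la\{2,\dots,r+1,m\},\{1,\dots,r+1,m\}\ra$ you mention collapses to $\Tilde G$, since the first listed gene is dominated by the second; but this is harmless, as your main argument never uses $G'$. The domination bookkeeping you flag as the main obstacle is dispatched cleanly by the observation that for $t\le r+1$ the $t$ largest elements of $\{1,\dots,r+1,m\}$ coincide with the $t$ largest elements of $\{2,\dots,r+1,m\}$, so the feared off-by-one shift never materialises.
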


\begin{proposition}\label{prop: coveringrel}
 Let $G=\la \{g_1,\dots,g_r,m\}\ra$ be a genetic code and $G_i=\{g_1,\dots,g_{i-1},g_i-1,g_{i+1},\dots,g_r,m\}$ for $1\leq i\leq r$. Then $G$ covers the genetic code $\la G_1,\dots, G_r\ra$. 
\end{proposition}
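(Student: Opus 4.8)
The plan is to reduce everything to a single identity about order ideals. Write $J=\{g_1,\dots,g_r,m\}$ for the unique gene of $G$ and put $H=\la G_1,\dots,G_r\ra$. For a genetic code $K=\la A_1,\dots,A_k\ra$, let $S_m(K)$ denote the order ideal generated by $A_1,\dots,A_k$ in the poset of subsets of $[m]$ containing $m$ ordered by $\leq$ (this extends the notation $S_m(\alpha)$, which by \Cref{remgen} is exactly the ideal generated by the genes of $G(\alpha)$). By \Cref{genord} and \Cref{remgen} the genetic order between codes is precisely inclusion of these ideals, and a code is recovered from its ideal as the antichain of $\leq$-maximal elements. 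Hence it is enough to establish that $S_m(G)=S_m(H)\cup\{J\}$ with $J\notin S_m(H)$: granting this, any genetic code $K$ with $H\preceq K\preceq G$ satisfies $S_m(H)\subseteq S_m(K)\subseteq S_m(G)$, and since the two outer ideals differ in exactly one element we must have $K\in\{H,G\}$; alternatively, once this ideal identity is in hand \Cref{crgc} applies directly.

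I would then dispatch the two easy containments. Each $G_i$ is $J$ with one entry decreased by $1$ — or, when $g_i-1=g_{i-1}$ (and when $i=1$, $g_1=1$), with one entry deleted — so $G_i\leq J$ in every case and therefore $S_m(H)\subseteq S_m(G)$. On the other hand $J\in S_m(G)$ trivially, while $J\not\leq G_i$ for every $i$: when $|G_i|=r+1$ the sorted tuple of $G_i$ is entrywise $\leq$ that of $J$ with a strict drop in the $i$-th slot, and when $|G_i|=r$ the set $J$ is simply larger; either way $J$ cannot be dominated by $G_i$, so $J\notin S_m(H)$.

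The substantive step is the reverse containment $S_m(G)\setminus\{J\}\subseteq S_m(H)$: every $I$ with $m\in I$, $I\leq J$ and $I\neq J$ must satisfy $I\leq G_i$ for some $i$. If $|I|\leq r$, then modifying the smallest entry of $J$ leaves its top $r$ entries intact, so the top $|I|$ entries of the sorted tuples of $J$ and of $G_1$ agree and $I\leq J$ already forces $I\leq G_1$. If $|I|=r+1$, write $I=(i_1<\dots<i_r<m)$, so that $i_k\leq g_k$ for all $k$ with strict inequality for at least one $k$; starting from such a $k$ I would descend the index while $g_j=g_{j-1}+1$ — permissible because $i_{j-1}<i_j\leq g_j-1=g_{j-1}$ keeps the strict inequality $i_{j-1}<g_{j-1}$ alive — until reaching an index $j$ at which $G_j$ is a genuine $(r+1)$-element set; this must occur, since a stop at $j=1$ with $g_1=1$ would force $i_1<1$. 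Then $i_k\leq g_k$ for $k\neq j$ together with $i_j\leq g_j-1$ is precisely the assertion $I\leq G_j$. Combined with the easy half, this gives $S_m(G)=S_m(H)\cup\{J\}$ and hence the proposition. I expect this index-descent to be the only real obstacle — it is where the degenerate $G_i$ (entry collisions $g_i-1=g_{i-1}$, or an entry falling out of $[m]$) have to be steered around, and one has to verify the descent always lands on a non-degenerate $G_j$ — whereas the passage to order ideals and the two easy containments are routine bookkeeping.
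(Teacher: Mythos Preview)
Your proof is correct and follows the same route as the paper: both arguments reduce the covering claim to the single identity $S_m(G)=S_m(\la G_1,\dots,G_r\ra)\cup\{g_1,\dots,g_r,m\}$ and then invoke \Cref{crgc}. The paper simply asserts this identity as an observation, whereas you supply the detailed verification (including the index-descent handling of degenerate $G_i$), so your write-up is a fully fleshed-out version of the paper's one-line proof.
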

\begin{proof}
The proof follows from the observation  $S_m(G)=S_m(\la G_1,\dots, G_r,\ra) \cup \{g_1,\dots,g_r,m\}$ and \Cref{crgc}.     
\end{proof}

\begin{remark}
We observe that the converse of \Cref{prop: monogen} and \Cref{prop: gen covering} is also true. Since we dont need the converse in the context of this paper, we opt not to write it here.    
\end{remark}

We now explain a procedure to construct a saturated chain of genetic codes. 
For simplicity we write the set $\{i_1,\dots,i_k\}$ as $i_1\dots i_k$.
Let $G=\la 25m\ra$. 
A genetic code covered by $G$ can be obtained using \Cref{prop: coveringrel}. 
For example, $G$ covers $\la 24m, 15m\ra$. 
We denote this genetic code by $C_1$. 
Then our first task is to obtain a saturated chain starting from $\la 24m \ra$ to $C_1$. 
Using \Cref{prop: coveringrel} for $\la 15m \ra$. In particular, we get the genetic code $C_2=\la 24m, 05m, 14m\ra=\la 24m,5m \ra$ which is covered by $C_1$. 
In the next step we reduce $C_2$ to $C_3=\la 24m,4m \ra=\la 24m\ra $. Thus we are done with the first task and the saturated chain is 

\begin{equation}\label{eq: satchain1}
C_3 = \la 24m \ra\precdot\la 24m, 5m \ra\precdot\la 24m, 15m \ra\precdot\la 25m \ra = G.    
\end{equation}

The next task is to keep using \Cref{prop: coveringrel} to reach from $C_3$ to $\la 23m \ra$. More precisely, we get this saturated chain as 
\begin{equation}\label{eq: satchain2}
\la 23m \ra\precdot\la 4m, 23m \ra\precdot\la 14m, 23m \ra\precdot C_3.    
\end{equation}
Now we can use \Cref{prop: monogen} to conclude that $\la 23m \ra$ covers $\la 13m \ra$. 
Then, we will reduce $\la 13m \ra$ to $\la 12m \ra$ using \Cref{prop: coveringrel}. Then we get the saturated chain 
\begin{equation}\label{eq: satchain3}
\la 12m\ra\precdot\la 3m,12m\ra\precdot\la 13m \ra. 
\end{equation}
Now there is an obvious way to reduce $\la 12m \ra$ to $\la m \ra$, which is 
\begin{equation}\label{eq: satchain4}
\la m \ra\precdot\la 1m \ra\precdot\la 2m \ra\precdot\la 12m \ra.    
\end{equation}
Finally, we use $\eqref{eq: satchain1}$, \eqref{eq: satchain2}, \eqref{eq: satchain3}, and \eqref{eq: satchain4} to obtain a saturated chain which starts with $\la m\ra$ and ends with $\la 25m \ra$ as follows 
\begin{align*}
\la m \ra\precdot\la 1m \ra\precdot\la 2m \ra\precdot\la 12m \ra\precdot\la 3m,12m\ra\precdot\la 13m \ra\precdot\la 23m \ra\precdot\la 4m, 23m \ra \\
\precdot\la 14m, 23m \ra\precdot\la 24m \ra\precdot\la 24m, 5m \ra\precdot\la 24m, 15m \ra\precdot\la 25m \ra.\\
\end{align*}
Given any monogenic code $G$, the general idea of constructing a saturated chain of genetic codes which starts with $\la m\ra$ and ends with $G$ is similar.  \
More precisely, start with a genetic code $G=\la\{ g_1,\dots, g_r,m\}\ra$. 
Then use \Cref{prop: coveringrel} repeatedly to get the covering chain that starts from $\la \{g_1,\dots,g_r-1,m\} \ra$ and reaches $G$ and then from $\la \{g_1,\dots,g_{r-1},g_{r-1}+1,m\} \ra$ to $\la \{g_1,\dots,g_r-1,m\} \ra$. 
One can use these ideas iteratively to construct a saturated chain from $\la \{g_1, g_1+2,\dots, g_1+r,m\}$ to $G$.
Then one can use \Cref{prop: monogen} to proceed further. 
Then again by using similar ideas to get a saturated chain starts from $\{1,2,\dots,r,m\}$ to   $\la \{g_1, g_1+2,\dots, g_1+r,m\}$. 
Then, there is an obvious saturated chain starts from $\la \{m\}\ra$ and ends with $\{1,2,\dots,r,m\}$.
Finally, we can combine all these saturated chains to get the saturated chain from $\la\{m\}\ra$ to $G=\la \{g_1,\dots g_r,m\}\ra$.

These ideas can be generalized to construct a saturated chain from $\la \{m\} \ra$ to any genetic code $G=\la G_1, \dots G_k\ra$ by constructing saturated chain between $\la G_i \ra$ and $\la\{m\}\ra$ for each $1\leq i\leq k$ and fixing all other genes in $G$.


\begin{remark}
    The answer to the question - would any arbitrary collection of subsets of $[m]$ that contain $m$ be a genetic code?- is no.  
    Since, in a genetic code any two genes can't be comparable and the complement of a gene can't be a short subset.
    For an arbitrary collection, any of the previous two conditions may fail. 
    However, it is possible to have a genetic code that does not correspond to any length vector. 
    Note that, we can define a short subset system abstractly as a collection of subsets of $[m]$ that satisfy: singletons are there, it is an abstract simplicial complex and if a subset is there in the collection then its complement is not there. 
    This definition gives rise to genetic codes that do not correspond to any length vector. 
    For example, $\la 2469 \ra$ is a genetic code that doesn't correspond to any length vector \cite[Lemma 4.6]{HausmannRidriguez}. 

    Another way of looking at (realizable) genetic codes is via hyperplane arrangements. 
    Equations of the form $\sum\pm x_i =0$ define a set of hyperplanes in $\R^m$. 
    Connected components of the complement of the union of these hyperplanes are called chambers. 
    An $m$-tuple in a chamber gives us a generic length vector. 
    It is not hard to check that changing the length vector in a chamber does not change the diffeomorphism type of the corresponding polygon spaces. 
    The same is true for genetic codes: there is a one-to-one correspondence between chambers of this arrangement in (the positive orthant of) $\R^m$ and (realizable) genetic codes in $[m]$.
    One can describe saturated chains of genetic codes using chambers of this arrangement. 
    Providing full details here will involve introducing more vocabulary and some technical results. 
    Since this language of arrangements and chambers is not directly relevant to the aim of this paper, we won't provide any details. 
    However, for the interested reader we only mention that the partial order on chambers is defined using `the set of separating hyperplanes' and the covering relation is given by `wall crossing'. 
\end{remark}

\subsection{Submanifold arrangements} 
There are smoothly embedded closed codimension-$1$ submanifolds of planar polygon spaces corresponding to $2$-element short subsets. In fact, the collection of all such submanifolds forms a submanifold arrangement. In this section, we study some combinatorial properties of this arrangement. We also study the induced cell structure.

\begin{definition}
Let $X$ be a finite dimensional smooth, closed manifold. 
A \emph{submanifold arrangement} is a finite collection $\A=\{N_{1},\dots,N_{r}\}$ of codimension-$1$ submanifolds such that,
\begin{enumerate}
    \item each element of $\A$ is smoothly embedded as a closed subset;
    \item for every point $x\in \cup_{i=1}^{r} N_{i}$ has a coordinate neighbourhood $V_{x}$ such that the collection $\{N_{1}\cap V_{x},\dots,N_{r}\cap V_{x}\}$ is a hyperplane arrangement in $V_{x}$ with $x$ as the origin;
    \item the intersections of members of $\A$ induces a regular cell structure o $X$ and each cell is combinatorially equivalent to simple convex polytope of an appropriate dimension.
\end{enumerate}
\end{definition}
There is an important combinatorial object associated with the submanifold arrangement. 
\begin{definition}
The intersection poset $\I(\A)$ is the set of connected components of all possible intersections of $N_i$’s ordered by reverse inclusion.
\end{definition}

Corresponding to every $2$-element short subset $\{i,j\}$ we have polygonal configurations with $i$-th and $j$-th  sides are in the same direction. Collection of such polygonal configurations forms codimension-$1$
submanifold of $\malpha$. 
In particular we write \[N_{i,j}=\{(v_{1},\dots,v_{i},\dots,v_{j},\dots,v_{m} )\in \malpha : v_{i}=v_{j}\}.\]
Let \[\alpha(i,j)=(\alpha_{1},\dots, \hat{\alpha_{i}},\dots,\hat{\alpha_{j}},\dots,\alpha_{i}+\alpha_{j},\dots,\alpha_{m})\] be the $(m-1)$-tuple such that $\alpha_{i}$ and $\alpha_{j}$ are absent in $\alpha(i,j)$. 
Observe that $\alpha(i,j)$ is a generic length vector. It is easy to see that $N_{i,j}\cong \mathrm{M}_{\alpha(i,j)}$. Similarly we define 
\[\overline{N}_{i,j}=\{(v_{1},\dots,v_{i},\dots,v_{j},\dots,v_{m} )\in \mbalpha : v_{i}=v_{j}\}.\] We also have $\overline{N}_{i,j}\cong \overline{\mathrm{M}}_{\alpha(i,j)}$. 
For a length vector $\alpha$, we define the finite collections of submanifolds of $\malpha$ and $\mbalpha$ as follows
\[\aalpha :=\{N_{i,j} : \{i,j\} \hspace{.1cm} \text{is} \hspace{.1cm} \alpha-\text{short}\},\] \[\abalpha :=\{\overline{N}_{i,j} : \{i,j\} \hspace{.1cm} \text{is} \hspace{.1cm} \alpha-\text{short}\}.\]

Let $\alpha$ be a generic length vector. Let \[\Pi_{m}(\alpha)=\{\pi\in \Pi_{m} : \textit{blocks of $\pi$ are $\alpha$-short}\}\] and \[\overline{\Pi}_{m}(\alpha)=\{\overline{\pi} : \pi\in \Pi_{m} \textit{ and
$\mathrm{M}_{\alpha(\pi)}$ is disconnected }\}.\]
Let $\mathcal{L}_{\alpha}= \Pi_{m}(\alpha) \sqcup \overline{\Pi}_{m}(\alpha)$ be the poset under the reverse refinement as a partial order. 

\begin{lemma}
The intersection posets $\I(\aalpha)$ and $\I(\mathcal{\overline{A}}_{\alpha})$ are isomorphic to the posets $\mathcal{L}_{\alpha}$ and $\Pi_{m}(\alpha)$, respectively.
\end{lemma}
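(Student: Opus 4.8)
The plan is to exhibit explicit order-preserving bijections in both cases, using the fact that a connected component of an intersection of the $N_{i,j}$'s is naturally indexed by a partition of $[m]$ together with a choice of connected component of the polygon space of the blown-down length vector. First I would set up notation: given a partition $\pi = J_1 - \cdots - J_k$ of $[m]$ all of whose blocks are $\alpha$-short, define $\alpha(\pi)$ to be the $k$-tuple obtained by merging the lengths within each block, and let $N_\pi = \bigcap_{i,j \text{ in the same block}} N_{i,j}$. As in the earlier discussion showing $N_{i,j} \cong \mathrm{M}_{\alpha(i,j)}$ (iterated), one identifies $N_\pi$ with $\mathrm{M}_{\alpha(\pi)}$; the $\alpha$-shortness of each block is exactly what guarantees $\alpha(\pi)$ is a genuine (generic) length vector with a nonempty polygon space, so $N_\pi$ is nonempty. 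The condition that each block is short is also what is needed for $N_\pi$ to be a transverse intersection of the codimension-$1$ submanifolds, so it is itself a submanifold of the expected dimension; this is where one invokes the submanifold arrangement axioms already established.

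Next I would treat the two cases separately. For $\mathcal{\overline{A}}_\alpha$: since $\mbalpha$ is obtained by quotienting $\malpha$ by the free involution $\tau$, and $\tau$ preserves each $\overline{N}_{i,j}$, every intersection of the $\overline{N}_{i,j}$'s is connected whenever the corresponding polygon space $\overline{\mathrm{M}}_{\alpha(\pi)}$ is connected — and for generic length vectors $\overline{\mathrm{M}}_{\alpha(\pi)}$ is always connected (it is a closed manifold which is a quotient of a polygon space, and polygon spaces taken up to $\mathrm{O}_2$ are connected in positive dimension; in dimension $0$ genericity rules out the empty or disconnected cases among the short partitions). Hence the map $\pi \mapsto \overline{N}_\pi$ is a well-defined bijection from $\Pi_m(\alpha)$ onto $\I(\mathcal{\overline{A}}_\alpha)$, and it is clearly order-reversing-into-order-reversing, i.e. order-preserving for the reverse-refinement order on $\Pi_m(\alpha)$: refining $\pi$ means intersecting with more of the $\overline{N}_{i,j}$'s, hence passing to a smaller subspace, which is exactly the partial order on $\I(\mathcal{\overline{A}}_\alpha)$.

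For $\I(\aalpha)$ the subtlety is that $\malpha$ (and more generally $\mathrm{M}_{\alpha(\pi)}$) can be disconnected — precisely when it has dimension $0$, i.e. consists of two points swapped by $\tau$. So a single short partition $\pi$ may give an $N_\pi \subset \malpha$ with one or two components. I would define the map $\mathcal{L}_\alpha \to \I(\aalpha)$ by sending $\pi \in \Pi_m(\alpha)$ (with $\mathrm{M}_{\alpha(\pi)}$ connected) to $N_\pi$, and sending each $\overline\pi \in \overline{\Pi}_m(\alpha)$ to the pair of components of $N_\pi$; one must check this is well-defined, injective and surjective, which amounts to checking that the only way an intersection of the $N_{i,j}$'s is disconnected is via this $0$-dimensional phenomenon, and that $\overline{\Pi}_m(\alpha)$ as defined records exactly those partitions. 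Order-preservation then follows as before, together with the convention (implicit in the definition of $\mathcal{L}_\alpha$ as a poset under reverse refinement) for how the two-component symbols $\overline\pi$ sit relative to the coarser partitions below them.

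The main obstacle I expect is the disconnectedness bookkeeping in the $\aalpha$ case: one has to verify carefully that the decomposition $\mathcal{L}_\alpha = \Pi_m(\alpha) \sqcup \overline{\Pi}_m(\alpha)$ really does enumerate connected components of intersections without over- or under-counting — in particular that a partition $\pi$ with $\mathrm{M}_{\alpha(\pi)}$ connected never yields a disconnected $N_\pi$ inside $\malpha$ (which needs the identification $N_\pi \cong \mathrm{M}_{\alpha(\pi)}$ to be used correctly, not merely a homeomorphism of some cover), and that comparability in $\mathcal{L}_\alpha$ between a $0$-dimensional symbol $\overline\pi$ and a partition below it is matched by honest inclusion of components. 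Everything else is a routine translation between "intersecting more submanifolds" and "refining the partition," which is precisely the content of \Cref{icc} transported from the linear Coxeter setting to the polygon-space setting.
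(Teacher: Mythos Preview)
Your overall strategy matches the paper's: associate to each intersection of the $N_{i,j}$ a partition of $[m]$, identify $N_\pi$ with $\mathrm{M}_{\alpha(\pi)}$, and then keep track of connected components. Your treatment of the $\overline{\A}_\alpha$ case is essentially what the paper does.

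For $\aalpha$, however, there are two related errors that constitute a genuine gap. First, the claim that $\mathrm{M}_{\alpha(\pi)}$ is disconnected ``precisely when it has dimension $0$'' is false. Planar polygon spaces can be disconnected in positive dimension: for instance with $m=5$ and genetic code $\langle 125\rangle$ one has $\mathrm{M}_\alpha \cong T^2\sqcup T^2$ (this is worked out explicitly in the final example of the paper). The paper's proof makes no dimension restriction; it simply observes that when $N_\pi$ is disconnected it is a disjoint union of two tori, and labels one component $\pi$ and the other $\overline{\pi}$.

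Second, and relatedly, your bijection $\mathcal{L}_\alpha\to \I(\aalpha)$ is mis-specified. The set $\Pi_m(\alpha)$ contains \emph{all} partitions with short blocks, including those $\pi$ for which $\mathrm{M}_{\alpha(\pi)}$ is disconnected. For such a $\pi$, both $\pi\in\Pi_m(\alpha)$ and $\overline{\pi}\in\overline{\Pi}_m(\alpha)$ belong to $\mathcal{L}_\alpha$, and each must be sent to one of the two components of $N_\pi$. Your map leaves the disconnected $\pi$'s in $\Pi_m(\alpha)$ unassigned and sends $\overline{\pi}$ to ``the pair of components,'' which is not an element of $\I(\aalpha)$ at all. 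Once you fix this---assign $\pi$ to one component and $\overline{\pi}$ to the other, with no dimension hypothesis---the argument becomes exactly the paper's.
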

\begin{proof}
Consider the following intersection \[X=N_{i_1j_1}\cap N_{i_2j_2}\cap \dots \cap N_{i_rj_r}.\] Then by clubbing together pairwise intersecting $2$-element short subsets \[\{i_l,j_l : 1\leq l \leq r\}\] we can write
\[X=N_{I_1}\cap N_{I_2}\cap\dots \cap N_{I_s},\] where $N_{I_t}=\bigcap_{\{i,j\}\subset I_t}N_{ij}$ For $1\leq t \leq s$. Note that $I_1-I_2-\dots-I_s$ is a partition of $\{i_1,j_1,\dots,i_r,j_r\}$. By putting together remaining singletons we get the partition of $[m]$. Let's denote this partition by $\pi$. Recall that if $X$ is disconnected then it is the disjoint union of tori. We label one of the connected component of $\pi$ and the other one by $\overline{\pi}$.
Otherwise, label $X$ by $\pi$.
Conversely, we define an element of $\I(\aalpha)$ corresponding to a  partition $\pi=J_1-\dots-J_k$ of $[m]$ with all $J_i$'s are short. 
Consider the following intersection.
\[X=(\cap_{\{i_{1},j_{1}\}\subset J_{1}}N_{i_{1}j_{1}})\cap \dots \cap (\cap_{\{i_{k},j_{k}\}\subset J_{k}}N_{i_{k}j_{k}}).\] As done above if $X$ is disconnected we label one of the connected component by $\pi$ and the other one by $\overline{\pi}$.

Note that if the intersection corresponding to $2$-element short subsets $\{i_l,j_l : 1\leq l \leq r\}$ \[\overline{X}=\overline{N}_{i_1j_1}\cap \overline{N}_{i_2j_2}\cap \dots \cap \overline{N}_{i_rj_r}\] is nonempty then $\overline{X}$ is connected. Now the isomorphism between $\I(\mathcal{\overline{A}}_{\alpha})$ and $\Pi_{m}(\alpha)$ is clear.
\end{proof}

\begin{remark}
Let $\alpha$ be a generic length vector.
and $\pi=J_1-\dots-J_k$ be a partition of $[m]$ with all $J_i$'s are $\alpha$-short.
Consider the shorter length vector $\alpha(\pi)=(\alpha_{J_{1}},\dots,\alpha_{J_{k}})$ where $\alpha_{J_{l}}= \sum_{i\in J_{l}}\alpha_{i}$ for $1\leq l \leq k$. 
Let \[X=(\cap_{\{i_{1},j_{1}\}\subset J_{1}}N_{i_{1}j_{1}})\cap \dots \cap (\cap_{\{i_{k},j_{k}\}\subset J_{k}}N_{i_{k}j_{k}})\] and \[\overline{X}=(\cap_{\{i_{1},j_{1}\}\subset J_{1}}\overline{N}_{i_{1}j_{1}})\cap \dots \cap (\cap_{\{i_{k},j_{k}\}\subset J_{k}}\overline{N}_{i_{k}j_{k}}).\] Then
it is easy to see that $X\cong \mathrm{M}_{\alpha(\pi)}$ and $\overline{X}\cong \overline{\mathrm{M}}_{\alpha(\pi)}$.
\end{remark}

\begin{corollary}\label{lcbraid}
Both the collections $\aalpha$ and $\abalpha$ are locally isomorphic to either braid arrangement or the product of braid arrangement.
\end{corollary}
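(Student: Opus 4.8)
The plan is to reduce the statement to the local picture guaranteed by the axioms of a submanifold arrangement together with the identification of intersections $N_{i,j}$ (resp.\ $\overline{N}_{i,j}$) with lower-dimensional polygon spaces. Fix a point $x\in\malpha$ lying in the union of the submanifolds in $\aalpha$, and let $V_x$ be a coordinate neighbourhood as in the definition of a submanifold arrangement, so that the traces $\{N_{i,j}\cap V_x : \{i,j\}\text{ short}\}$ form a hyperplane arrangement in $V_x$ with $x$ at the origin. First I would determine exactly which submanifolds pass through $x$: a given $N_{i,j}$ contains $x$ iff the $i$-th and $j$-th sides of the polygon represented by $x$ point in the same direction. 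Grouping the indices of the sides of $x$ by common direction gives a partition $I_1 - \dots - I_s$ of a subset of $[m]$ (with the remaining indices forming singleton blocks whose sides are in distinct, generic directions), and the submanifolds through $x$ are precisely those $N_{i,j}$ with $\{i,j\}\subseteq I_t$ for some $t$ with $|I_t|\ge 2$.

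Next I would analyze the local hyperplane arrangement associated to each such $I_t$. The normal directions to $N_{i,j}$ at $x$, for $\{i,j\}\subseteq I_t$, are governed by the infinitesimal deformations that break the equality $v_i=v_j$; since moving each side of $I_t$ infinitesimally away from the common direction is an independent first-order motion (subject only to the closing condition, which is absorbed by the ambient manifold), the arrangement $\{N_{i,j}\cap V_x : \{i,j\}\subseteq I_t\}$ is linearly isomorphic to the braid arrangement $\B_{|I_t|}$ on the deformation parameters of the block $I_t$. Because deformations in distinct blocks $I_1,\dots,I_s$ are mutually independent, the full local arrangement $\{N_{i,j}\cap V_x : \{i,j\}\text{ short}\}$ is isomorphic to the product $\B_{|I_1|}\times\dots\times\B_{|I_s|}$ of braid arrangements (a single braid arrangement in the special case $s=1$). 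I would make the isomorphism precise either by exhibiting explicit local coordinates on $\malpha$ near $x$ in which $N_{i,j}$ becomes the hyperplane $\{t_i - t_j = 0\}$, or, more cleanly, by using the identification $N_{i,j}\cong \mathrm{M}_{\alpha(i,j)}$ and the earlier remark that an iterated intersection $N_{I_1}\cap\dots\cap N_{I_s}$ is diffeomorphic to $\mathrm{M}_{\alpha(\pi)}$, so that passing to the deepest stratum through $x$ and reading off the transverse directions recovers exactly the braid combinatorics. The argument for $\mbalpha$ and $\abalpha$ is identical, using $\overline{N}_{i,j}\cong\overline{\mathrm{M}}_{\alpha(i,j)}$; here every nonempty intersection is connected, so only the $\Pi_m(\alpha)$-part of the intersection poset appears, but the local model is still a product of braid arrangements.

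The main obstacle I anticipate is verifying, rigorously and in coordinates, that the transverse directions to the submanifolds through $x$ really do assemble into the braid arrangement on the block-deformation parameters rather than some other reflection-type arrangement — in other words, pinning down the linear isomorphism type of the local arrangement rather than merely its dimension. Concretely one must check that for a block $I_t$ of size $k$, the $\binom{k}{2}$ local hyperplanes have normal covectors spanning a $(k-1)$-dimensional space with exactly the incidence pattern of $\B_k$, and that no extra coincidences are forced by the closing relation $\sum_j \alpha_j v_j = 0$. This is where genericity of $\alpha$ is used: it guarantees that the relevant sub-sums are nonzero, so the closing condition cuts down dimension cleanly and introduces no degeneracy among the transverse hyperplanes. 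Once this local computation is in hand, the conclusion is immediate, since "locally isomorphic to a (product of) braid arrangement(s)" is exactly a statement about these coordinate neighbourhoods.
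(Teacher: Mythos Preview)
Your proposal is correct, and the underlying combinatorial observation---that the submanifolds through a point are indexed by the $2$-element subsets of the blocks of a partition, hence locally look like a product of braid arrangements---is exactly the content of the paper's proof. However, the paper executes this purely at the level of the intersection poset rather than in coordinates: for $X\in\I(\aalpha)$ corresponding to a partition $J_1-\dots-J_k$, it simply observes that the lower interval $\I(\A)_X=\{Y\in\I(\aalpha):X\subseteq Y\}$ consists of all refinements of this partition, whence $\I(\A)_X\cong\prod_{i=1}^k\I(\B_{|J_i|})$. No local charts, normal covectors, or infinitesimal deformations are invoked.

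The payoff of the paper's approach is that it entirely sidesteps the ``main obstacle'' you flag. You worry about verifying that the local hyperplanes have precisely the linear incidence pattern of $\B_k$ and that the closing relation introduces no extra degeneracies; the paper never needs this, because for its purposes (deducing the induced regular cell structure) the isomorphism type of the local intersection poset is all that is required, and that is read off immediately from the partition combinatorics. Your route would also work and is more explicitly geometric, but it does more than is needed and leaves you with an analytic verification that the paper's combinatorial argument avoids.
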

\begin{proof}
Let $X \in \I(\aalpha)$ be a connected submanifold. Then without loss of generality assume that $X=J_1-J_2-\dots-J_k$, where $J_{i}$'s are  $\alpha$-short. Consider the collection \[\I({\A})_{X}=\{Y\in \I(\aalpha) : X\subseteq Y\}.\] Note that any element of $\mathcal{\A}_{X}$ has the labelled by the refined partition of $X$. Therefore, the poset $\I({\A})_{X}$ is isomorphic to the poset of all refinements of of partition $J_1-J_2-\dots-J_k$. This concludes \[\I({\A})_{X}\cong \prod_{i=1}^{k}\I(\B_{|J_i|}).\] Similar arguments work for $\abalpha$. 
\end{proof}

The following result is an immediate consequence of the above corollary. 
\begin{corollary}
The collections $\aalpha$ and 
$\abalpha$  induces a regular cell structure on $\malpha$ and $\mbalpha$, respectively such that each cell is combinatorially equivalent to some simple polytopes.
\end{corollary}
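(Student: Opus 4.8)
The plan is to deduce this from the previous corollary (\Cref{lcbraid}) together with the definition of a submanifold arrangement. The key point is that the only two axioms of a submanifold arrangement that are non-trivial are the local triviality condition (2) and the condition (3) asserting that the induced cells are combinatorially simple polytopes; and by \Cref{lcbraid} both $\aalpha$ and $\abalpha$ are, near every intersection, isomorphic either to the braid arrangement $\B_n$ (restricted to $V$, i.e.\ its essentialization) or to a product of such essentialized braid arrangements. So everything reduces to the corresponding statement for the braid arrangement.

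First I would recall that the essentialized braid arrangement $\B_V$ induces on the sphere $\mathbb{S}V$ the Coxeter complex $CA_{m-1}$, whose closed cells are spherical simplices; a simplex is a simple polytope, and more generally the face structure near any point of the braid arrangement is the face lattice of a product of simplices (this is the standard fact that the link of a flat in a Coxeter arrangement is again a Coxeter complex, which is exactly \Cref{icc} in the text). Hence the induced cell structure on $\malpha$ and $\mbalpha$ is regular: each cell is the intersection of half-spaces coming from the local hyperplanes, and by \Cref{lcbraid} this local model is a product of braid chambers, whose closures are products of simplices. Products of simplices are simple convex polytopes, so condition (3) holds. Concretely, I would argue: pick a cell $\sigma$ of the induced structure, let $X$ be the smallest element of $\I(\aalpha)$ (or $\I(\abalpha)$) containing $\sigma$ in its closure; then a neighbourhood of a point in the relative interior of $\sigma$ is modelled on $\I(\A)_X \cong \prod_i \I(\B_{|J_i|})$ by \Cref{lcbraid}, and in this model $\sigma$ is a product of faces of the braid chambers, hence combinatorially a product of simplices, hence simple.

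For the local-triviality-to-regularity passage, I would invoke the fact that the braid arrangement's restriction to any flat is again (a product of) a braid arrangement together with the observation that $N_{i,j}\cong \mathrm M_{\alpha(i,j)}$ and more generally $X\cong \mathrm M_{\alpha(\pi)}$ (stated in the Remark preceding \Cref{lcbraid}); transversality of the $N_{i,j}$ then follows from transversality of the $H_{ij}$ in the local chart, giving the regular CW structure. The main obstacle — and the only place real work is needed — is verifying that the intersections $N_{i,j}$ genuinely meet transversally and that the induced subdivision is a \emph{regular} CW complex (closed cells are embedded disks, attaching maps are homeomorphisms onto their images); this is exactly what the local hyperplane-arrangement model in \Cref{lcbraid} buys us, since a hyperplane arrangement in $\R^n$ cuts a neighbourhood of the origin into (cones on) the faces of a polytopal fan, which is manifestly regular. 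Once regularity and the product-of-simplices local model are in hand, simplicity of each cell is immediate, and the corollary follows.
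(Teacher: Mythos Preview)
Your proposal is correct and follows the same approach as the paper, which simply records the statement as an immediate consequence of \Cref{lcbraid} without further argument. You have essentially written out the details that the paper leaves implicit: the local model is a product of braid arrangements, whose chambers are products of simplices, hence simple polytopes, and the regular CW structure comes from the local hyperplane-arrangement picture.
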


The following proposition is now clear.
\begin{proposition}
The collections $\aalpha$ and 
$\abalpha$  are submanifold arrangements in $\malpha$ and $\mbalpha$, respectively. 
\end{proposition}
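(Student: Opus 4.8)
# Proof Proposal

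The plan is to show that $\aalpha$ (and likewise $\abalpha$) satisfies the three axioms in the definition of a submanifold arrangement, drawing on the structural results already established. The first axiom — that each $N_{i,j}$ is smoothly embedded as a closed subset of $\malpha$ — follows immediately from the identification $N_{i,j} \cong \mathrm{M}_{\alpha(i,j)}$ recorded earlier: since $\alpha(i,j)$ is a generic length vector, $\mathrm{M}_{\alpha(i,j)}$ is a closed smooth manifold of dimension $m-4$, hence of codimension one in $\malpha$, and the defining condition $v_i = v_j$ is a closed smooth condition. The same remark applies verbatim to $\overline{N}_{i,j} \cong \overline{\mathrm{M}}_{\alpha(i,j)}$.

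For the second axiom, I would produce, around any point $x$ lying in the union, a coordinate chart in which the members of $\aalpha$ passing through $x$ look like a central hyperplane arrangement. Here I would invoke \Cref{lcbraid}: near such a point the intersection poset of the collection is (a product of copies of) the intersection lattice of a braid arrangement. Concretely, if $x$ represents a polygon whose parallel sides are grouped according to a partition $\pi = J_1 - \cdots - J_k$ with each $J_\ell$ short, then a neighbourhood of $x$ in $\malpha$ can be modelled on a neighbourhood of the corresponding point in the essentialized braid arrangement $\B_{V}$ — this is the local-triviality content of the earlier remark that $X \cong \mathrm{M}_{\alpha(\pi)}$ together with the observation that the normal structure to each $N_{i,j}$ is linear. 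In that chart the $N_{i,j}$ through $x$ become the hyperplanes $x_i = x_j$, so the collection $\{N_{i,j} \cap V_x\}$ is a genuine hyperplane arrangement with $x$ as origin, as required.

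The third axiom — that the induced decomposition is a regular cell structure with each cell combinatorially a simple polytope — is precisely the content of the two corollaries stated just above the proposition (the corollary following \Cref{lcbraid}). So for this axiom I would simply cite those corollaries; no new argument is needed. Assembling the three verified axioms gives that $\aalpha$ is a submanifold arrangement in $\malpha$, and the identical reasoning (using the $\Pi_m(\alpha)$ part of the intersection-poset lemma in place of $\mathcal{L}_\alpha$) gives that $\abalpha$ is a submanifold arrangement in $\mbalpha$.

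The main obstacle, and the only step requiring genuine care, is the second axiom: one must argue convincingly that the local model around an arbitrary intersection point is \emph{linear}, i.e. that the submanifolds $N_{i,j}$ meet in coordinates exactly like hyperplanes rather than merely transversally. The cleanest route is to exploit the description of a neighbourhood of the ``parallel-sides'' stratum via the polygon space $\mathrm{M}_{\alpha(\pi)}$ of the collapsed length vector, noting that deforming a parallel pair $v_i = v_j$ to $v_i \neq v_j$ is governed, to first order, by the difference of the corresponding angular coordinates — a linear condition. Once this linearization is pinned down, everything else is bookkeeping against results already in hand.
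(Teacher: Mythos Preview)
Your proposal is correct and takes essentially the same approach as the paper: the paper's proof is simply ``The following proposition is now clear,'' relying on the two corollaries immediately preceding it (the local-braid-arrangement corollary and the regular-cell-structure corollary) together with the identification $N_{i,j}\cong \mathrm{M}_{\alpha(i,j)}$; you have just unpacked which result verifies which axiom. Your added discussion of the linearity in axiom~(2) is a reasonable elaboration of what the paper leaves implicit in \Cref{lcbraid}.
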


We denote the cell structures induced from the submanifold arrangements $\aalpha$ and 
$\abalpha$ on $\malpha$ and $\mbalpha$ by $\kalpha$ and $\kbalpha$, respectively.


\begin{remark}
It can be observed that the cell structure $\mathrm{K}_{\alpha}$ induced by the submanifold arrangement coincides with the cell structure introduced by Panina in \cite{pan}. Panina also showed that for a generic length vector $\malpha$ is a PL-manifold. 
\end{remark}

\begin{example}\label{pl}
Let $\la m\ra$ be the genetic code of $\alpha$. Then we have  \[\abalpha=\{\overline{N}_{ij}: \{i,j\}\subset[m-1]\}.\] 
Note that for any proper subset of $[m-1]$ is $\alpha$-short if the genetic code of $\alpha$ is $\la m\ra$. Therefore, corresponding to any partition of $[m-1]$, we have nonempty intersection of $\overline{N}_{i,j}$'s. Therefore, it is easy to see that  \[\I({\overline{\A}_{\la m \ra}})\cong \Pi_{m-1}\setminus \{\hat{1}\},\] where $\Pi_{m-1}$ is the lattice of partitions of $[m-1]$. 
Note that $\malpha\cong S^{m-3}$ and the arrangement \[\aalpha=\{\overline{N}_{ij}: \{i,j\}\subset[m-1]\}\] is the braid arrangement $\B_{m-1}$ intersected with $S^{m-3}$.

\begin{figure}[H]
    \centering
    \includegraphics[scale=.35]{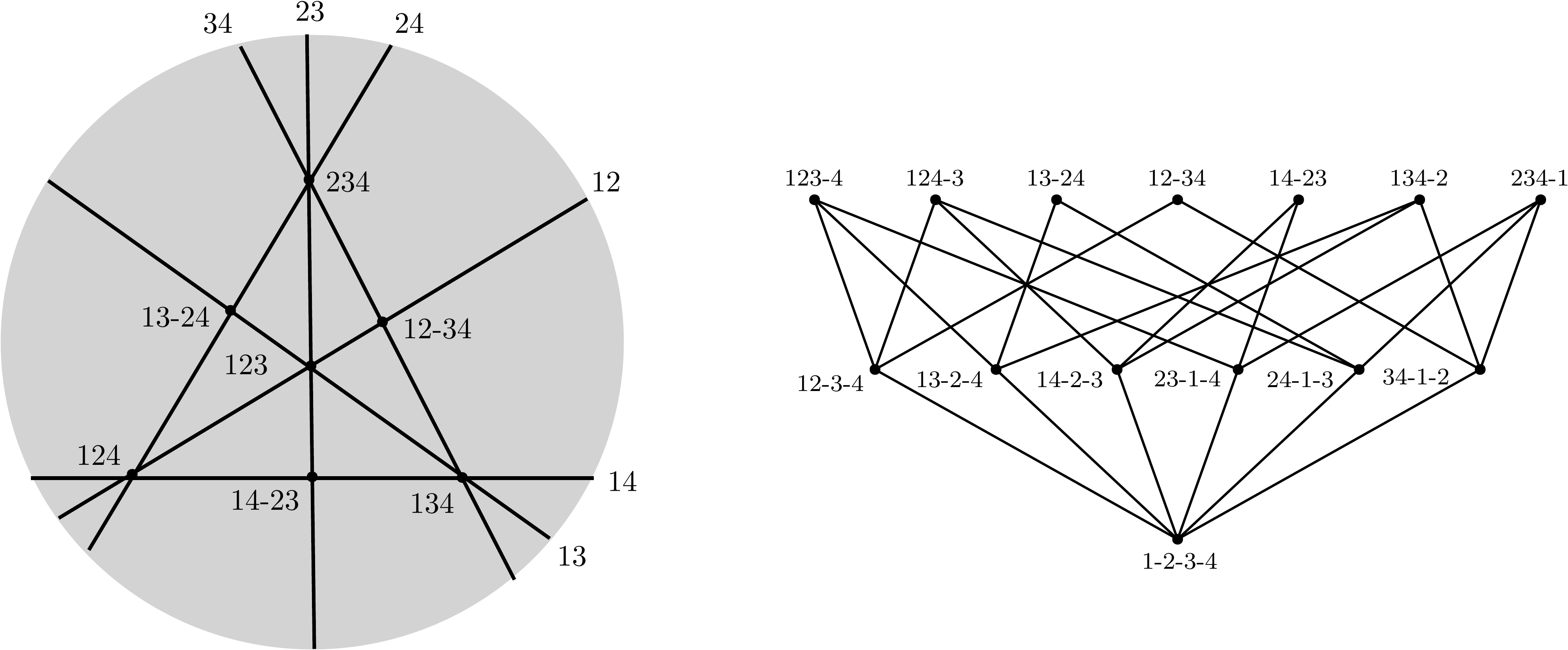}
    \vspace{.5cm}
    \caption{$\overline{\mathrm{K}}_{\la5\ra}\cong \mathbb{P}CA_{3}$ and $\I(\overline{\aalpha})\cong\Pi_{4}\setminus \{\hat{1}\}$}
    \label{fig: }
\end{figure}

\begin{figure}[H]
    \centering
    \includegraphics[scale=.26]{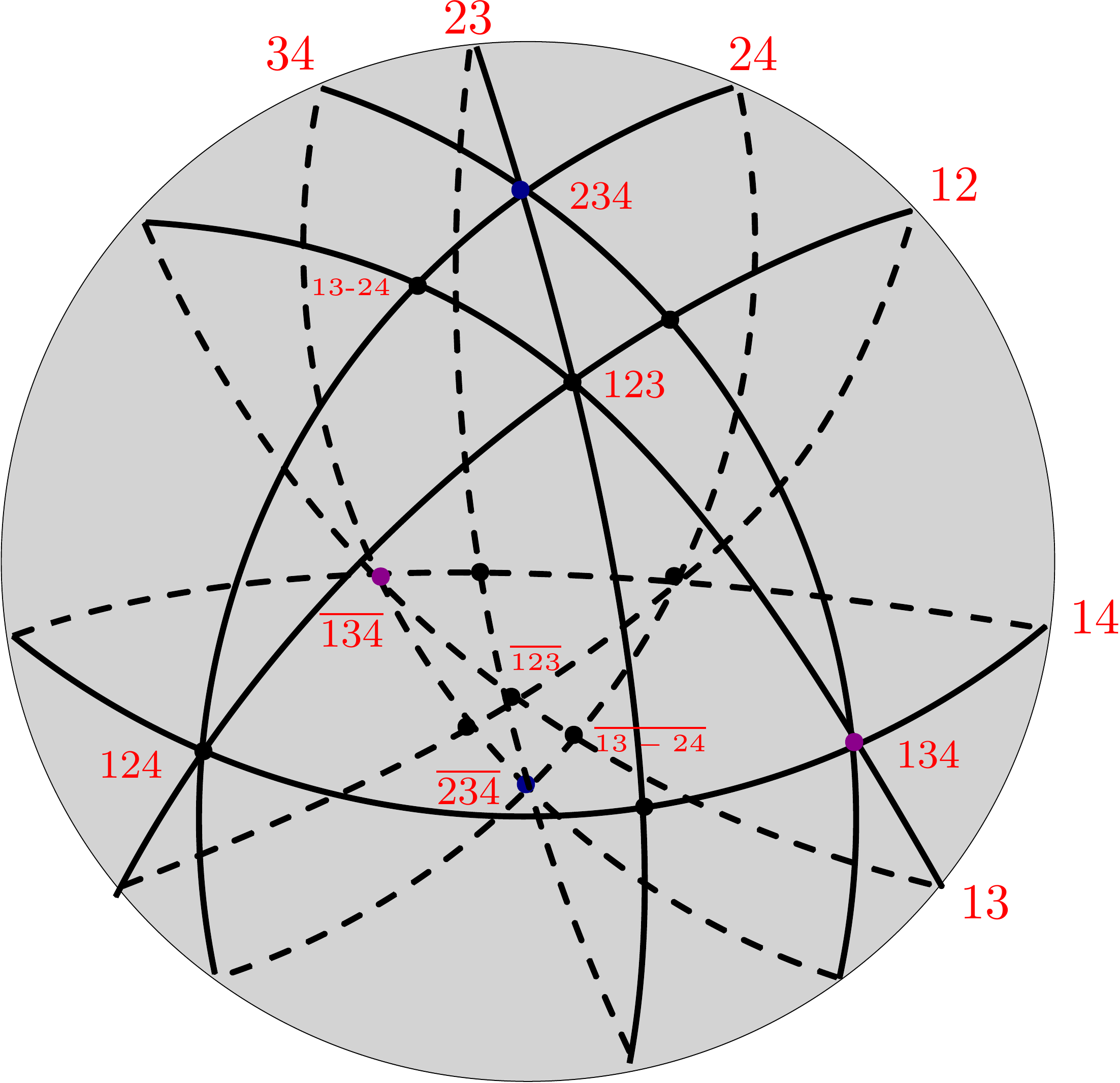}
    \vspace{.5cm}
    \caption{$\mathrm{K}_{\la5\ra}\cong CA_{3}$ with $\I(\mathrm{A}_{\alpha})$}
    
\end{figure}  

\end{example}

\begin{proposition}\label{Kcox}
The cell complex $\mathrm{K}_{\la m \ra}$ (respectively $\overline{\mathrm{K}}_{\la m \ra}$) is isomorphic to the Coxeter complex (respectively projective Coxeter complex) of type $A_{m-2}$.
\end{proposition}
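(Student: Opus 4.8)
\textbf{Proof plan for Proposition \ref{Kcox}.}

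The plan is to show that when the genetic code of $\alpha$ is $\la m\ra$, the polygon spaces $\malpha$ and $\mbalpha$ together with their induced cell structures are literally the Coxeter complex and the projective Coxeter complex of type $A_{m-2}$. I would proceed in three steps. First, I would identify the underlying spaces: when $G(\alpha)=\la m\ra$, the example following \Cref{gc} records that $\malpha\cong S^{m-3}$ and $\mbalpha\cong \R P^{m-3}$, and \Cref{pl} further observes that every proper subset of $[m-1]$ is $\alpha$-short, so $\aalpha = \{N_{ij} : \{i,j\}\subset[m-1]\}$ is exactly the braid arrangement $\B_{m-1}$ restricted to the sphere $\mathbb{S}V$ inside $V=\{\sum x_i = 0\}\subset\R^{m-1}$. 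This is precisely the defining data of $CA_{m-2}$.

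Second, I would make the homeomorphism $\malpha\cong S^{m-3}$ arrangement-preserving, i.e. match up the cell structures. Concretely, for a length vector with genetic code $\la m\ra$ (for instance $\alpha=(1,\dots,1,m-2)$), the closure condition $\sum_{i=1}^m \alpha_i v_i = 0$ forces $v_m$ to be determined (up to the $\mathrm{SO}_2$-quotient one can normalize $v_m$), and the remaining freedom is governed by the directions $v_1,\dots,v_{m-1}$ of the unit-length short sides subject to $\sum_{i=1}^{m-1}\alpha_i v_i = -\alpha_m v_m$; after the rigid-motion quotient this is a sphere of dimension $m-3$, and the submanifolds $N_{ij}$ (where $v_i=v_j$) correspond exactly to the trace of the hyperplanes $H_{ij}$ on that sphere. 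So the induced regular cell structure $\kalpha$ is combinatorially the one cut out by $\B_{m-1}$ on $\mathbb{S}V$, which is $CA_{m-2}$ by definition. The count is consistent: $CA_{m-2}$ has $(m-1)!$ top cells, matching the chambers of $\B_{m-1}$.

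Third, for the projective statement, I would invoke the involution $\tau$ of \eqref{invo}: since $\alpha$ is generic, $\tau$ is free and $\malpha\to\mbalpha$ is a double cover, and under the identification of Step 2 the involution $\tau$ is carried to the antipodal map on $\mathbb{S}V$ (reflecting a polygon across the $X$-axis sends each $v_i=(x_i,y_i)$ to $(x_i,-y_i)$, which on the sphere of directions is the antipodal involution relative to the $X$-axis, conjugate to $-\mathrm{id}$). Hence $\mbalpha = \malpha/\tau \cong \mathbb{S}V/(\pm 1)$ with the induced cell structure, which is $\mathbb{P}CA_{m-2}$. The main obstacle is Step 2: making the homeomorphism $\malpha\cong S^{m-3}$ explicit enough that it visibly sends the submanifold $N_{ij}$ to the hyperplane section $H_{ij}\cap\mathbb{S}V$ and is cellular for the two cell structures. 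I would handle this by choosing coordinates in which the $\mathrm{SO}_2$-action is normalized (e.g. pin down the direction of $v_m$), writing the remaining data as a point of the sphere in $V\cong\R^{m-2}$, and checking that the defining equation $v_i=v_j$ of $N_{ij}$ becomes $x_i=x_j$, so the combinatorial data (faces, incidences, the intersection poset $\Pi_{m-1}\setminus\{\hat 1\}$ from \Cref{pl}) match those of the braid arrangement on the nose.
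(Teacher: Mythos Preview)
Your proposal is correct and follows essentially the same route as the paper. The paper's proof is extremely terse: it cites that $\mathrm{M}_{\la m\ra}\cong S^{m-3}$, $\overline{\mathrm{M}}_{\la m\ra}\cong\R P^{m-3}$, and that $\A_{\la m\ra}$ is the braid arrangement $\B_{m-1}$ (all from \Cref{pl}), then declares the conclusion ``evident''. Your Steps~1 and~2 are exactly an unpacking of why this is evident.

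One small caution on Step~3: your parenthetical that the reflection $\tau$ is ``the antipodal involution relative to the $X$-axis, conjugate to $-\mathrm{id}$'' is loose --- reflection across an axis and $-\mathrm{id}$ are not conjugate in $\mathrm{GL}_2(\R)$, and what you really need is that under the identification of Step~2 the induced involution on $S^{m-3}$ is (cellularly equivalent to) the antipode. The paper sidesteps this entirely: since \Cref{pl} already records that $\overline{\mathrm{M}}_{\la m\ra}\cong\R P^{m-3}$ and that $\abalpha$ has intersection poset $\Pi_{m-1}\setminus\{\hat 1\}$, the induced cell structure on the quotient is the projective braid decomposition by definition, without ever naming $\tau$ explicitly. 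You could do the same and drop the conjugacy remark.
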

\begin{proof}
Recall that $\mathrm{M}_{\la m \ra}\cong S^{m-3}$ and $\overline{\mathrm{M}}_{\la m \ra}\cong \R P^{m-3}$. 
Moreover, the submanifold arrangement $\A_{\la m \ra}$ is isomorphic to the braid arrangement $\B_{m-1}$; see \Cref{pl}. 
Therefore, it is evident that $\mathrm{K}_{\la m \ra}\cong CA_{m-2}$ and $\overline{\mathrm{K}}_{\la m \ra}\cong \mathbb{P}CA_{m-2}$.
\end{proof}

\section{A theorem of Hausmann}\label{Hausmann}
Let $\alpha$ and $\beta$ be two length vectors such that $S_{m}(\beta)= S_{m}(\alpha)\cup J$ for some $J\subset [m]$. Hausmann \cite{geohausmann} used techniques from Morse theory to obtain a relation between corresponding planar polygon spaces $\malpha$ and $\mathrm{M}_{\beta}$. He proved the following theorem.

\begin{theorem}[{\cite[Proposition 2.9]{geohausmann}\label{ht}}]
The space
$\mathrm{M}_{\beta}$ is obtained from $\malpha$ by an $O(1)$-equivarient surgery of index $|J|-2$. i.e.,
\[M_{\beta}\cong \left(M_{\alpha}\setminus S^{|J|-2}\times D^{m-1-|J|}\right)\underset{S^{|J|-2}\times S^{m-2-|J|}}{\bigcup}\left(D^{|J|-1}\times S^{m-2-|J|}\right) ,\]
where $O(1)$ acts antipodally on $D^{m-1-|J|}$ and $D^{|J|-1}$.
\end{theorem}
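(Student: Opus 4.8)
The plan is to realize both polygon spaces as level sets (or gradient-flow quotients) of a single Morse-theoretic family and then read off the surgery from the change in topology as one crosses a critical level. First I would set up the standard bending-flow / Morse function picture for planar polygon spaces. Recall from \cite{geohausmann} that the assignment of short-subset data $S_m(\alpha)$ to a length vector is locally constant on the chambers of the walls $\{\sum \pm\alpha_i = 0\}$, and that two chambers sharing a wall differ by adding exactly one short subset $J$ to $S_m(\alpha)$ (this is precisely the combinatorial situation $S_m(\beta) = S_m(\alpha) \cup \{J\}$, by \Cref{crgc}). So I would choose a path $\alpha_t$ of (possibly non-generic at one instant) length vectors connecting $\alpha$ to $\beta$ that crosses this wall exactly once, and consider the universal family of polygon spaces over this path. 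The non-generic length vector $\alpha_{1/2}$ on the wall has $\sum_{i\in J}\alpha_i = \sum_{i\notin J}\alpha_i$, so $\mathrm{M}_{\alpha_{1/2}}$ acquires exactly the singular configurations where all sides indexed by $J$ point one way and the rest point the opposite way — a single degenerate ``lined-up'' polygon up to the symmetry, whose index computation is the heart of the matter.

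Next I would identify the local model at this degeneration. The space of configurations with the $J$-sides collinear-and-aligned and the complementary sides collinear-and-aligned (but the two groups antiparallel) is, after quotienting, a point; the normal data splits into the directions in which the $J$-block can ``open up'' and the directions in which the complementary block can open up. Opening the $J$-block (of $|J|$ sides summing to a length that is just barely long or short relative to the rest) contributes a factor whose link is $S^{|J|-2}$, while opening the complementary block contributes $S^{m-2-|J|}$; this is exactly the standard computation that the bending-type Morse function on $\mathrm{M}_{\alpha_t}$ has a single nondegenerate critical point of index $|J|-2$ on the wall, when the flow is set up so that $J$ becomes short on the $\beta$-side. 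The $O(1)$-equivariance comes from the reflection involution $\tau$ of \eqref{invo}: $\tau$ preserves the whole family, fixes the degenerate configuration, and acts on the normal disc $D^{|J|-1}$ and on $D^{m-1-|J|}$ by $-1$ (reflecting the opened-up polygon across the axis of alignment reverses the ``bending angles''). Feeding a critical point of index $|J|-2$ into the standard handle-attachment / surgery description of how a manifold changes as one passes a critical value then yields exactly
\[
\mathrm{M}_{\beta}\cong \left(\mathrm{M}_{\alpha}\setminus S^{|J|-2}\times D^{m-1-|J|}\right)\underset{S^{|J|-2}\times S^{m-2-|J|}}{\bigcup}\left(D^{|J|-1}\times S^{m-2-|J|}\right),
\]
with the $O(1)$-action antipodal on both disc factors, which is the claim.

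I expect the main obstacle to be making the Morse-theoretic comparison genuinely rigorous across the non-generic wall: $\mathrm{M}_{\alpha_{1/2}}$ is not a manifold, so one cannot simply quote ``manifolds on two sides of a nondegenerate critical value differ by surgery.'' The clean way around this, which I would follow, is the one in \cite{geohausmann}: rather than degenerating the target, fix the length vector $\beta$ and instead use a Morse function on $\mathrm{M}_\beta$ (a bending function associated to the diagonal joining two vertices that separates the sides in $J$ from the rest) whose sublevel sets interpolate between $\mathrm{M}_\alpha$ and $\mathrm{M}_\beta$; the subset $J$ becoming short on the $\beta$-side is exactly the condition that one new critical point appears, and its index is computed by a Hessian calculation at the unique configuration where the $J$-sides and the complementary sides are each aligned. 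Checking that this Hessian is nondegenerate with signature giving index $|J|-2$, and that the stable/unstable sphere sizes are $S^{|J|-2}$ and $S^{m-2-|J|}$, is the one computation I would actually carry out in detail; everything else (the equivariance, the gluing region $S^{|J|-2}\times S^{m-2-|J|}$, the descent to $\overline{\mathrm{M}}$) then follows formally.
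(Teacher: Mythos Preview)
The paper does not actually prove this theorem: it is stated with an explicit citation to \cite[Proposition 2.9]{geohausmann} and is used as a black box (see the remark immediately following it and the proof of \Cref{main thm}). So there is no ``paper's own proof'' to compare against.

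That said, your proposal is essentially a sketch of Hausmann's original argument. The strategy of crossing a single wall of the chamber decomposition, identifying the unique degenerate (collinear) configuration appearing at the wall, and reading off the surgery index from the Hessian of a bending/diagonal-length Morse function is exactly the mechanism behind \cite[Proposition 2.9]{geohausmann}. You correctly flag the only genuine subtlety --- that one should work with a Morse function on a fixed smooth $\mathrm{M}_\beta$ rather than with the singular intermediate space --- and you identify the index computation as the one step requiring an actual calculation. The $O(1)$-equivariance via the reflection $\tau$ is also handled correctly. So while there is nothing here to compare with the present paper, your outline is a faithful reconstruction of the cited proof and would go through once the Hessian computation is written out.
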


Note that using  \Cref{crgc}, we can say that if the genetic code $G$ covers $G'$ then $\mathrm{M}_{G}$ is obtained from $\mathrm{M}_{G'}$ by an $O(1)$-equivariant surgery. 
In fact, one can iterate this process over any saturated chain of genetic codes. 
Note that $\mathrm{M}_{\la m \ra}\cong S^{m-3}$ and $\overline{\mathrm{M}}_{\la m \ra}\cong \R P^{m-3}$.
The iterated version of \Cref{ht} is given by the following proposition.

\begin{proposition}
Let $\la m\ra=G_{1}\preceq G_{2}\preceq \dots \preceq G_{r}=G$ be the saturated chain of genetic codes. Then the space $\mathrm{M}_{G}$ is obtained from $S^{m-3}$ by an iterated $O(1)$-equivarient surgery.
\end{proposition}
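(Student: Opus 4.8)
The plan is to induct on the length $r$ of the saturated chain, feeding each covering relation into Hausmann's \Cref{ht}. For the base case, recall $\mathrm{M}_{\la m\ra}\cong S^{m-3}$ (see \Cref{pl}), so $\mathrm{M}_{G_1}$ is $S^{m-3}$ itself and there is nothing to do.

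For the inductive step, suppose $\mathrm{M}_{G_i}$ has already been exhibited as an iterated $O(1)$-equivariant surgery on $S^{m-3}$. Since $G_{i+1}$ covers $G_i$, \Cref{crgc} produces a subset $J_i\subset[m]$ (necessarily containing $m$) with $S_m(G_{i+1})=S_m(G_i)\cup\{J_i\}$. Pick length vectors $\alpha^{(i)},\alpha^{(i+1)}$ whose $S_m$-data equal $S_m(G_i)$ and $S_m(G_{i+1})$ respectively; by \Cref{remgen} the corresponding polygon spaces are diffeomorphic to $\mathrm{M}_{G_i}$ and $\mathrm{M}_{G_{i+1}}$, and these two length vectors now satisfy exactly the hypothesis of \Cref{ht}. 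Hence $\mathrm{M}_{G_{i+1}}$ is obtained from $\mathrm{M}_{G_i}$ by an $O(1)$-equivariant surgery of index $|J_i|-2$, and composing this with the surgeries supplied by the inductive hypothesis finishes the step. After $r-1$ steps we reach $\mathrm{M}_{G_r}=\mathrm{M}_G$, as desired.

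The one point that needs care is that \Cref{ht} is stated for genuine (realizable) length vectors, whereas a priori an intermediate code $G_i$ in a saturated chain could be abstract. For the chains produced by the explicit construction following \Cref{prop: monogen}, \Cref{prop: gen covering} and \Cref{prop: coveringrel} this is harmless: as observed in the remark on chambers of the arrangement $\{\sum\pm x_i=0\}$, a saturated chain of genetic codes corresponds to a gallery of chambers of that arrangement, so every $G_i$ occurring along it is realizable. Alternatively, one may note that Hausmann's Morse-theoretic proof of \Cref{ht} only uses the short-subset combinatorics and so applies verbatim to abstract short subset systems. I expect this realizability bookkeeping to be the only genuine subtlety; the remainder is a routine induction assembling the individual surgeries of \Cref{ht}.
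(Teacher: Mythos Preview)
Your argument is correct and follows essentially the same route as the paper: both proofs unwind the saturated chain into covering relations via \Cref{crgc} and then apply \Cref{ht} at each step, iterating from $\mathrm{M}_{\la m\ra}\cong S^{m-3}$ up to $\mathrm{M}_G$. Your additional paragraph on realizability of the intermediate codes is a point the paper does not explicitly address; it is a reasonable caveat, though for the proposition as stated one may simply take the chain to consist of realizable codes from the outset.
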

\begin{proof}
Note that 
$S_{m}(G_{i+1})=S_{m}(G_{i})\cup J_{i}$ for $1\leq i \leq r-1$. Therefore, $\mathrm{M}_{G_{i+1}}$ is obtained from $\mathrm{M}_{G_{i}}$ by an $O(1)$-equivarient surgery along $S^{|J_{i}|-2}$. Observe that $S_{m}(G_{r})=\{m\}\displaystyle \cup_{i=1}^{r-1} J_{i}$. Now the propositions follows from iteratively applying  \Cref{ht}. 
\end{proof}

\begin{remark}
Observe that  \Cref{ht} doesn't describe how to keep track of iterations, also there is no CW-complex analogue of the procedure. 
\end{remark}

We now define a projective version of the surgery operation for certain quotient manifolds.
Let $M$ be a smooth manifold of dimension $n$ with a free $\Z_2$-action.
Suppose the $k$-dimensional sphere $S^k$ and its trivial tubular neighbourhood $S^k\times D^{n-k}$, embeds $\Z_2$-equivariantly in $M$. Let $\overline{M}$ denote the quotient of $M$ by the free $\Z_2$-action. Note that $\R P^k$ and the quotient $\frac{S^k\times D^{n-k}}{(x,y)\sim (-x,-y)}$ embed in $\overline{M}$.
With this information we introduce the following notations.
\begin{enumerate}
    \item \[\overline{D\mathbb{P}}(k):=\displaystyle \frac{S^k\times D^{n-k}}{(x,y)\sim (-x,-y)},\]
    
    \item \[\underline{D\mathbb{P}}(k):=\displaystyle \frac{D^k\times S^{n-k}}{(x,y)\sim (-x,-y)},\]
    
    \item \[\partial(\overline{D\mathbb{P}}(k))=\displaystyle\frac{S^k\times S^{n-k-1}}{(x,y)\sim(-x,-y)}=\partial(\underline{D\mathbb{P}}(k+1)).\]
\end{enumerate}




\begin{remark}
The space $\partial(\overline{D\mathbb{P}}(k))$ is the total space of the sphere bundle of the $(n-k)$-direct sum of canonical line bundles over $\R P^{k}$ and $\overline{D\mathbb{P}}(k)$ is the total space of the disc bundle of the $(n-k)$-direct sum of canonical line bundles over $\R P^{k}$.
\end{remark}

With the above notations, we define projective cellular surgery.
\begin{definition}
An \emph{index $k$-projective surgery} on a manifold $\overline{M}$ along $\R P^k$, produces a manifold $\mathbb{P}\mathrm{S}_k(\overline{M})$ defned as follows \[\mathbb{P}\mathrm{S}_{k}(\overline{M}):=\bigg(\overline{M}\setminus \overline{D\mathbb{P}}(k)\bigg)\bigcup_{\partial(\overline{D\mathbb{P}}(k))} \bigg(\underline{D\mathbb{P}}(k+1)\bigg).\]
\end{definition}

\begin{proposition}
We now have the following :
\begin{enumerate}
    \item The index-$0$ surgery on a manifold $M$ along $S^0$, produces a manifold homeomorphic to the connected sum $M\sharp (S^1\times S^{n-1})$.
    \item The index-$0$ projective surgery on a manifold $\overline{M}$ along $\R P^0$, produces a manifold homeomorphic to the connected sum $\overline{M}\sharp \R P^{n}$.
\end{enumerate}
\end{proposition}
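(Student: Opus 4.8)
The plan is to treat both statements as direct unwindings of the surgery definition once one identifies the relevant disc- and sphere-bundles as trivial. For part (1), I would start from the definition of index-$0$ surgery on $M$ along $S^0$: by \Cref{ht} (specialized to $|J|=2$), the surgered manifold is
\[
\bigl(M\setminus (S^0\times D^{n})\bigr)\underset{S^0\times S^{n-1}}{\bigcup}\bigl(D^1\times S^{n-1}\bigr).
\]
Here $S^0$ is two points, so $S^0\times D^n$ is a disjoint pair of $n$-discs, and removing them from $M$ leaves $M$ with two open balls deleted. The attaching region is $S^0\times S^{n-1}$, two copies of $S^{n-1}$, and we glue in $D^1\times S^{n-1}$, a cylinder $[0,1]\times S^{n-1}$, identifying $\{0\}\times S^{n-1}$ with one boundary sphere and $\{1\}\times S^{n-1}$ with the other. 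The standard observation is that attaching a cylinder across two deleted balls is precisely the operation of forming a connected sum with the manifold obtained by taking $S^{n-1}\times S^1$ and deleting a ball — equivalently, one recognizes $[0,1]\times S^{n-1}$ with its two ends as $(S^1\times S^{n-1})$ minus two discs, after which the gluing is the defining picture of $M\sharp(S^1\times S^{n-1})$. I would make this precise by writing $S^1\times S^{n-1}$ as the union of two copies of $D^1\times S^{n-1}$ along $S^0\times S^{n-1}$ and checking that removing one of the two solid pieces — a single $D^1\times S^{n-1}$, whose interior is an open ball's worth of... actually the cleaner route: $S^1 \times S^{n-1}$ minus a ball deformation retracts appropriately; I would instead directly invoke the well-known fact that index-$0$ surgery equals connected sum with $S^1\times S^{n-1}$, supplying the cylinder-gluing identification as the one-line justification.

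For part (2), I would run the same argument $\Z_2$-equivariantly and then pass to the quotient. The free $\Z_2$-action on $M$ swaps the two points of the embedded $S^0$ (this is forced: an equivariant embedding of $S^0$ with $S^0$ carrying the antipodal action sends the two points to an orbit), hence swaps the two deleted $n$-discs and swaps the two ends of the cylinder $D^1\times S^{n-1}$ while acting antipodally on the $D^1$ and $S^{n-1}$ factors, as in the statement of \Cref{ht}. Therefore the quotient is $\overline{M}$ with a single $n$-disc deleted, with $\underline{D\mathbb{P}}(1) = (D^1\times S^{n-1})/((x,y)\sim(-x,-y))$ glued along $\partial(\overline{D\mathbb{P}}(0)) = (S^0\times S^{n-1})/\!\sim \;\cong S^{n-1}$. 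Now the key identification is $\underline{D\mathbb{P}}(1)\setminus(\text{open ball}) \cong \R P^n\setminus(\text{open ball})$, or rather: $\underline{D\mathbb{P}}(1) = (D^1\times S^{n-1})/((x,y)\sim(-x,-y))$ is the twisted $S^{n-1}$-bundle over $\R P^1=S^1$... hmm, I should instead recognize $\underline{D\mathbb{P}}(1)$ directly. The quotient $(D^1\times S^{n-1})/((x,y)\sim(-x,-y))$: fixing a fundamental domain for the $D^1=[-1,1]$ antipodal map, say $[0,1]$, we see this is $S^{n-1}\times[0,1]$ with the single end $S^{n-1}\times\{0\}$ folded by the antipodal map to give $\R P^{n-1}$'s... no — $\{0\}$ is fixed by $t\mapsto -t$, so over $t=0$ we quotient $S^{n-1}$ by its antipodal map getting $\R P^{n-1}$, and the bundle restricted to $[0,1]$ is the mapping cylinder of $S^{n-1}\to\R P^{n-1}$ with the other end $S^{n-1}$ free. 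The mapping cylinder of the double cover $S^{n-1}\to\R P^{n-1}$ is exactly $\R P^n$ with an open ball removed (this is the standard CW/handle decomposition $\R P^n = \R P^{n-1}\cup_{\text{deg 2}} D^n$ read as: $\R P^n$ minus a ball deformation retracts to $\R P^{n-1}$ with attaching map the double cover). Hence $\underline{D\mathbb{P}}(1)\cong \R P^n\setminus B^n$, and gluing this into $\overline{M}\setminus B^n$ along the common boundary $S^{n-1}$ is the definition of $\overline{M}\sharp\R P^n$.

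The main obstacle — and the only place requiring care — is the identification $\underline{D\mathbb{P}}(1)\cong\R P^n\setminus(\text{open }n\text{-ball})$, since one must check that the boundary sphere along which the connected sum is taken matches (orientations/framings are irrelevant here as $\R P^n$ need not be orientable, but the homeomorphism type of the gluing region and that it is an $S^{n-1}$ glued by a homeomorphism must be confirmed). I would handle this via the mapping-cylinder description above: $\partial(\overline{D\mathbb{P}}(0))\cong S^{n-1}$ is the "free" end $S^{n-1}\times\{1\}$ of the mapping cylinder, and the inclusion into $\R P^n\setminus B^n$ as its boundary sphere is standard. Everything else is a routine unpacking of definitions, and part (1) is the non-equivariant shadow of part (2) (or conversely, part (2) follows from part (1) by taking $\Z_2$-quotients), so I would present (2) in full and obtain (1) either in parallel or as the upstairs statement.
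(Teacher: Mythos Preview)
Your proposal is correct and follows essentially the same approach as the paper: unwind the surgery definitions, identify the removed piece as a single disc and the attached piece as $\R P^n$ minus a disc, and recognize the result as a connected sum. The one noteworthy difference is how you justify $\underline{D\mathbb{P}}(1)\cong \R P^n\setminus B^n$: the paper observes in one line that $D^1\times S^{n-1}\cong S^n\setminus(D^n_+\sqcup D^n_-)$ and then quotients by the antipodal map, whereas you take a fundamental domain and recognize the mapping cylinder of the double cover $S^{n-1}\to\R P^{n-1}$; both are valid, the paper's route is shorter. (A minor quibble: your citation of \Cref{ht} for the general surgery formula is misplaced, since that theorem is the polygon-space statement, not the definition of surgery --- but the formula you write down is the standard one and needs no citation.)
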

\begin{proof}[Proof of (1)]
Without loss of generality $M=S^n$. 
Let $D^+$ and $D^-$ be two small and disjoint antipodal discs containing the north pole and south pole, respectively.
Then the surgery on $S^n$ along $S^0$ tells us that,  remove $D^+$ and $D^-$ from $S^n$ and attach $D^1\times S^{n-1}$ to $S^n\setminus(D^+\sqcup D^-)$. This clearly gives  $\mathrm{S}_0(S^n)=S^1\times S^{n-1}$.  
Observe that $S^n\sharp (S^1\times S^{n-1})=S^1\times S^{n-1}$.
Without loss of generality, we can assume that there is a bigger disc $D$ such that $D^+\sqcup D^-\subseteq D$
Now observe that the index-$0$ surgery on $S^n$ is an equivalent operation to removing $D$ from $S^n$ and attaching $(S^1\times S^{n-1})\setminus D'$ to $S^n\setminus D$, for some disc $D'$ in $S^1\times S^{n-1}$. This is same as the connected sum of $S^n$ and $S^1\times S^{n-1}$. 
The same idea works for general $M$.

\vspace{1mm}

\noindent \emph{Proof of (2).}
We make the following observations:
\begin{enumerate}
    \item \[\overline{D\mathbb{P}}(0)=\displaystyle \frac{S^0\times D^{n}}{(x,y)\sim (-x,-y)}=D^n,\] 
    \item \[\underline{D\mathbb{P}}(1)=\displaystyle \frac{D^1\times S^{n-1}}{(x,y)\sim (-x,-y)}=\displaystyle\frac{S^n\setminus (D^n_{+}\sqcup D^n_{-})}{x\sim -x}= \R P^n\setminus D^n,\]
    \item \[\partial(\overline{D\mathbb{P}}(0))=\displaystyle\frac{S^0\times S^{n-1}}{(x,y)\sim(-x,-y)}=S^{n-1}=\partial(\underline{D\mathbb{P}}(1)).\]
\end{enumerate}
Therefore, \[\mathbb{P}\mathrm{S}_{0}(\overline{M}):=\bigg(\overline{M}\setminus D^n\bigg)\bigcup_{S^{n-1}} \bigg(\R P^n\setminus D^n \bigg)=\overline{M}\sharp \R P^{n}.\]
This proves the result.
\end{proof}

\begin{theorem}[{\cite[Proposition 2.9]{geohausmann}}]\label{gdscr}
If the genetic code $G$ covers $G'$, i.e., $S_{m}(G)=S_{m}(G')\cup J$ for some $J\subset[m]$ then  $\overline{\mathrm{M}}_{G}$  is homeomorphic to $\mathbb{P}\mathrm{S}_{|J|-2}(\overline{\mathrm{M}}_{G'})$.
\end{theorem}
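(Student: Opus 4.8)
## Proof Strategy for Theorem \ref{gdscr}

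The plan is to deduce the projective statement from Hausmann's $O(1)$-equivariant surgery result (\Cref{ht}) by passing to quotients. Recall that \Cref{ht} says $\mathrm{M}_G$ is obtained from $\mathrm{M}_{G'}$ by an $O(1)$-equivariant surgery of index $|J|-2$, where $O(1)=\Z_2$ acts antipodally on the disc factors $D^{m-1-|J|}$ and $D^{|J|-1}$. The first step is to record that this $\Z_2$-action is precisely the restriction of the free involution $\tau$ from \eqref{invo} on $\mathrm{M}_G$ and $\mathrm{M}_{G'}$, and that $\overline{\mathrm{M}}_G=\mathrm{M}_G/\tau$, $\overline{\mathrm{M}}_{G'}=\mathrm{M}_{G'}/\tau$. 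Because $\tau$ is free (genericity of the length vector), the quotient map is a double cover, and the equivariant surgery decomposition of \Cref{ht} descends cellwise to a decomposition of $\overline{\mathrm{M}}_G$.

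Next I would identify the pieces of the descended decomposition with the spaces $\overline{D\mathbb{P}}$, $\underline{D\mathbb{P}}$ introduced above. Set $n=m-3=\dim \mathrm{M}_{G'}$ and $k=|J|-2$. The embedded sphere in \Cref{ht} is $S^{k}=S^{|J|-2}$ with trivial tubular neighbourhood $S^{|J|-2}\times D^{m-1-|J|}=S^k\times D^{n-k}$; the involution acts as $(x,y)\mapsto(-x,-y)$ on this neighbourhood (antipodal on the sphere factor coming from the ambient antipodal action, antipodal on the disc factor by Hausmann). Hence the quotient of the removed tube is exactly $\overline{D\mathbb{P}}(k)$, the quotient of the glued-in piece $D^{|J|-1}\times S^{m-2-|J|}=D^{k+1}\times S^{n-k-1}$ is $\underline{D\mathbb{P}}(k+1)$, and the gluing locus $S^{|J|-2}\times S^{m-2-|J|}=S^k\times S^{n-k-1}$ descends to $\partial(\overline{D\mathbb{P}}(k))=\partial(\underline{D\mathbb{P}}(k+1))$. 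Assembling these identifications gives
\[
\overline{\mathrm{M}}_G\;\cong\;\bigl(\overline{\mathrm{M}}_{G'}\setminus\overline{D\mathbb{P}}(k)\bigr)\bigcup_{\partial(\overline{D\mathbb{P}}(k))}\bigl(\underline{D\mathbb{P}}(k+1)\bigr)\;=\;\mathbb{P}\mathrm{S}_{|J|-2}(\overline{\mathrm{M}}_{G'}),
\]
which is the assertion.

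The one genuinely delicate point — and the step I expect to be the main obstacle — is verifying that the $\Z_2$-action on the tubular neighbourhood is the \emph{product} antipodal action $(x,y)\mapsto(-x,-y)$ rather than some twisted action, i.e.\ that the normal bundle of the embedded sphere and the surgery framing are genuinely $\tau$-equivariant with the diagonal sign convention. This requires unpacking how $\tau$ interacts with the Morse-theoretic construction in \cite{geohausmann}: the embedded $S^{|J|-2}$ arises (roughly) as a sphere of polygons with the $J$-indexed sides parallel but free to rotate together, and $\tau$ reflects all these directions simultaneously, which is the antipodal map on that sphere; the normal $D^{m-1-|J|}$ parametrizes the complementary polygon, on which $\tau$ again acts by reflection, i.e.\ antipodally after a suitable choice of coordinates. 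Once this equivariance with the stated signs is pinned down, everything else is the formal quotient bookkeeping already carried out in the two preceding propositions (the computations of $\overline{D\mathbb{P}}(0)$, $\underline{D\mathbb{P}}(1)$, etc.), so the bulk of the write-up is routine. I would therefore structure the proof as: (i) recall \Cref{ht} and the identification of the $O(1)$-action with $\tau$; (ii) check the product-antipodal form of the action on the tube (the crux); (iii) quotient each piece and match it to the $\mathbb{P}$-notation; (iv) conclude.
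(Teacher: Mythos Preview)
The paper does not give a proof of this theorem: it simply attributes the statement to Hausmann, citing it as \cite[Proposition 2.9]{geohausmann} exactly as it did for \Cref{ht}. In other words, the authors regard \Cref{gdscr} as nothing more than the $O(1)$-equivariant surgery of \Cref{ht} rewritten on the quotient, using the $\overline{D\mathbb{P}}$, $\underline{D\mathbb{P}}$ notation they have just set up; they do not spell out the passage to the quotient.

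Your proposal supplies precisely that passage, and the strategy is correct: quotienting the equivariant decomposition of \Cref{ht} by the free involution $\tau$ identifies the three pieces with $\overline{D\mathbb{P}}(|J|-2)$, $\underline{D\mathbb{P}}(|J|-1)$ and their common boundary, which is the definition of $\mathbb{P}\mathrm{S}_{|J|-2}(\overline{\mathrm{M}}_{G'})$. You are also right that the only nontrivial check is that the $\Z_2$-action on the handle is the diagonal antipodal one $(x,y)\mapsto(-x,-y)$; this is exactly what ``$O(1)$ acts antipodally on $D^{m-1-|J|}$ and $D^{|J|-1}$'' in the statement of \Cref{ht} is recording, and your geometric explanation (reflection of the $J$-parallel configuration and of the complementary polygon) is the correct picture behind it. So your write-up is more detailed than, but entirely in line with, what the paper leaves implicit.
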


One can iterate the projective surgery to any chain $G_{1}\preceq G_{2}\preceq \dots \preceq G_{r}=G$ such that for each $1\leq i\leq r-1$, $G_{i}$ is covered by $G_{i+1}$. We denote the space after iterated projective surgery as  $\mathbb{P}\mathrm{S}_{(j_{1},\dots,j_{r})}(M_{G_{1}})$ where $j_{i}=|J_{i}|-2$ such that $S_{m}(G_{i+1})=S_{m}(G_{i})\cup J_{i}$. In fact we have $S_{m}(G_{r})=S_{m}(G_{1})\displaystyle \cup_{i=1}^{r-1} J_{i}$. 
With this, we have the following version of \Cref{ht}.
\begin{proposition}
The planar polygon space $\overline{\mathrm{M}}_{G}$  is homeomorphic to $\mathbb{P}\mathrm{S}_{(j_{1},\dots,j_{r})}(\R P^{m-3})$.
\end{proposition}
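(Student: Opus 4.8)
The plan is to deduce this proposition by combining two ingredients already established in the excerpt: first, the existence of a saturated chain of genetic codes connecting $\la m\ra$ to any prescribed genetic code $G$ (constructed explicitly in the subsection on saturated chains via \Cref{prop: coveringrel}, \Cref{prop: monogen}, and \Cref{prop: gen covering}); and second, \Cref{gdscr}, which identifies $\overline{\mathrm{M}}_{G_{i+1}}$ with $\mathbb{P}\mathrm{S}_{|J_i|-2}(\overline{\mathrm{M}}_{G_i})$ whenever $G_{i+1}$ covers $G_i$ with $S_m(G_{i+1})=S_m(G_i)\cup J_i$. So the first step is simply to invoke the saturated-chain construction to fix a chain $\la m\ra = G_1 \precdot G_2 \precdot \cdots \precdot G_r = G$.

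Next I would set up the bookkeeping: for each $1\le i\le r-1$, write $S_m(G_{i+1}) = S_m(G_i)\cup\{J_i\}$, which is legitimate precisely because each step is a covering relation (\Cref{crgc}). Put $j_i = |J_i| - 2$. Then I would apply \Cref{gdscr} at each stage of the chain, obtaining $\overline{\mathrm{M}}_{G_{i+1}} \cong \mathbb{P}\mathrm{S}_{j_i}(\overline{\mathrm{M}}_{G_i})$. Iterating from $i=1$ and using the base case $\overline{\mathrm{M}}_{\la m\ra} \cong \R P^{m-3}$ (established in \Cref{pl} / \Cref{Kcox}) gives $\overline{\mathrm{M}}_G \cong \mathbb{P}\mathrm{S}_{(j_1,\dots,j_{r-1})}(\R P^{m-3})$, which is the assertion, with the telescoping identity $S_m(G_r) = S_m(G_1)\cup\bigcup_{i=1}^{r-1} J_i = \{m\}\cup\bigcup_{i=1}^{r-1} J_i$ recording exactly which handles get attached.

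The one point requiring a little care — and the main (modest) obstacle — is that iterating \Cref{gdscr} along a chain of length more than two presupposes that after performing the surgery $\mathbb{P}\mathrm{S}_{j_1}$ the resulting manifold $\mathbb{P}\mathrm{S}_{j_1}(\R P^{m-3})$ is genuinely homeomorphic to $\overline{\mathrm{M}}_{G_2}$ as a manifold with a free $\Z_2$-lift (so that the hypotheses of \Cref{gdscr} — a smooth manifold carrying a free $\Z_2$-action with an equivariantly embedded sphere and trivial tubular neighbourhood — are again available for the next step). This is exactly what \Cref{gdscr}, as a homeomorphism statement at each stage, provides, since $\overline{\mathrm{M}}_{G_{i}}$ is itself a smooth closed manifold double-covered by $\mathrm{M}_{G_i}$ with the free involution $\tau$; the embedded $\R P^{j_i+1}$ along which the next surgery is performed comes from the codimension-$1$ submanifold arrangement $\overline{\A}_{G_{i+1}}$. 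So the iteration is well-posed, and no new geometric input beyond \Cref{gdscr} and the saturated-chain construction is needed; the proof is essentially a clean induction on the length $r$ of the chain, with the definition of $\mathbb{P}\mathrm{S}_{(j_1,\dots,j_{r-1})}$ as iterated projective surgery doing the rest.
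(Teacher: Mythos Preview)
Your proposal is correct and follows the same approach as the paper: iterate \Cref{gdscr} along a saturated chain from $\la m\ra$ to $G$, using $\overline{\mathrm{M}}_{\la m\ra}\cong\R P^{m-3}$ as the base case. The paper in fact states the proposition without a separate proof, treating it as an immediate consequence of the preceding paragraph on iterating projective surgery; your write-up supplies the routine induction and the check that the hypotheses of \Cref{gdscr} persist at each stage, and you also correctly note that the tuple of surgery indices should have length $r-1$ rather than $r$.
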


\section{Combinatorial surgery on a meet semi-lattice}

The notion of combinatorial blow-up was introduced by Feichtner and Kozlov in \cite{FeichtnerKozlov}.
Here, we introduce a similar notion in the contexts of surgery.

\begin{definition}
Let $\mathcal{L}$ be a meet semilattice. 
For an element $x\in \mathcal{L}$, we define a poset $\CS_x(\mathcal{L})$, the combinatorial surgery on $\mathcal{L}$ along $x$, as follows:

\begin{itemize}
    \item elements of $\CS_x(\mathcal{L})$:
    \begin{enumerate}
    \item $y\in \mathcal{L}$, $y\neq x$ and $y\ngeq x$
    \item $[x,y]$ , $y<x$
\end{enumerate}
\item order relations in $\CS_{x}(\mathcal{L})$:
\begin{enumerate}
    \item $y>z$ in $\CS_{x}(\mathcal{L})$ if $y>z$ in $\mathcal{L}$
    \item $[x,y]>[x,z]$ in $\CS_{x}(\mathcal{L})$ if $y>z$ in $\mathcal{L}$
    \item $[x,y]>z$ in $\CS_{x}(\mathcal{L})$ if $y\geq z$ in $\mathcal{L}$.
    \item $y<[x,\hat{0}]$ if $y\vee x\in \mathcal{L}$.
\end{enumerate}
\end{itemize}
\end{definition}

\begin{remark}
The element $[x,\hat{0}]$ can be thought of as a result of combinatorial surgery along $x$.
\end{remark}

\begin{theorem}
The poset $\CS_x(\mathcal{L})$ is a meet semilattice. Moreover, for $x\in \mathcal{L}$,
the posets $\mathcal{L}$ and $\CS_x(\mathcal{L})$ are of equal rank
$k$ the rank of $\mathcal{L}$,
then \[\mathrm{rk}([x,y])=k-\mathrm{rk}(x)+\mathrm{rk}(y)+1.\]

\end{theorem}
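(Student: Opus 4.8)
The plan is to verify the semilattice property directly from the definition and then compute ranks by tracking how the four order relations interact. First I would show that $\CS_x(\mathcal{L})$ has a minimum element, namely $\hat{0}$ (which survives since $\hat 0 \neq x$ and $\hat 0 \ngeq x$ as long as $x \neq \hat 0$; the case $x = \hat 0$ must be handled separately or excluded), and that every pair of elements has a meet. The elements of $\CS_x(\mathcal{L})$ fall into two classes: ordinary elements $y \in \mathcal{L}$ with $y \not\geq x$, and ``blown-up'' intervals $[x,y]$ with $y < x$. I would check the meet case-by-case: (i) for two ordinary elements $y, z$, their meet $y \wedge z$ in $\mathcal{L}$ still satisfies $y\wedge z \not\geq x$ (since $y \not\geq x$), so it is the meet in $\CS_x(\mathcal{L})$ as well — here relation (4), $w < [x,\hat 0]$ when $w \vee x \in \mathcal{L}$, needs to be checked not to create a spurious lower bound; (ii) for $[x,y]$ and $[x,z]$, the meet should be $[x, y\wedge z]$, using relation (2); (iii) the mixed case $[x,y] \wedge z$ is the subtle one — by relation (3), $[x,y] > w$ iff $y \geq w$, so the lower bounds of $[x,y]$ among ordinary elements are exactly the $w \leq y$, hence $[x,y]\wedge z = y \wedge z$ when this is an ordinary element, with a special subcase when the candidate meet would be forced up to some $[x,\cdot]$ via relation (4).

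Next, for the rank statement, I would first argue that $\CS_x(\mathcal{L})$ is graded of the same rank $k$ as $\mathcal{L}$. The key local observation is that replacing the principal filter $\mathcal{L}_{\geq x}$ (an interval of rank $k - \mathrm{rk}(x)$ sitting above $x$) by the single new element $[x,\hat 0]$, while simultaneously attaching a new copy $[x,y]$ above each $y < x$, should preserve the existence of maximal chains of length $k$. Concretely, the maximal chains through $[x,\hat 0]$ have the form $\hat 0 = y_0 \precdot y_1 \precdot \cdots \precdot y_s = [x,\hat 0] \precdot [x, \text{(something)}]$ — but here I must be careful: the intervals $[x,y]$ with $y < x$ form an order-isomorphic copy of the \emph{proper} part of $\mathcal{L}_{<x}$, and $[x,\hat 0]$ is their common minimum, so chains inside the ``$[x,-]$'' part have length at most $\mathrm{rk}(x)$. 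A maximal chain of $\CS_x(\mathcal{L})$ of full length should be: a maximal chain from $\hat 0$ up to some element $y < x$ of rank $\mathrm{rk}(x) - 1$, jumping to $[x,\hat 0]$ (one step down then up, so this needs checking against relations (3) and (4) to see that $[x,\hat 0]$ covers such $y$), then climbing through the copies $[x, y']$. I would compute $\mathrm{rk}([x,y])$ by exhibiting a maximal chain from $\hat 0$ to $[x,y]$ and counting: one goes $\hat 0 \to \cdots \to [x,\hat 0]$ in $\mathrm{rk}(x) + 1$ steps (the $+1$ being the ``surgery'' step into $[x,\hat 0]$), then $[x,\hat 0] \to \cdots \to [x,y]$ in $\mathrm{rk}(y)$ steps inside the isomorphic copy of $[\hat 0, y]$; since every maximal chain of $\CS_x(\mathcal{L})$ has length $k$, and the chain must then continue from $[x,y]$ up to a top element in $k - \mathrm{rk}([x,y])$ more steps matching $k - \mathrm{rk}(x) + \mathrm{rk}(y) - \mathrm{rk}(y) = k - \mathrm{rk}(x)$ steps available above $y$ in $\mathcal{L}$ minus adjustments, I get $\mathrm{rk}([x,y]) = (\mathrm{rk}(x) + 1) + \mathrm{rk}(y)$. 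Wait — that must be reconciled with the claimed formula $k - \mathrm{rk}(x) + \mathrm{rk}(y) + 1$; the reconciliation is that the ordinary-element rank function is \emph{unchanged} from $\mathcal{L}$ only below $x$, while everything incomparable to $x$ gets no shift, and the two expressions agree precisely when one measures $\mathrm{rk}([x,y])$ from the top: $\mathrm{rk}([x,y]) = k - \mathrm{corank}$, so I should instead compute the corank of $[x,y]$ as $\mathrm{rk}(x) - \mathrm{rk}(y) - 1$ by counting chains \emph{upward} from $[x,y]$, giving $\mathrm{rk}([x,y]) = k - \mathrm{rk}(x) + \mathrm{rk}(y) + 1$ as stated.

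The main obstacle I anticipate is establishing gradedness cleanly — i.e., that all maximal chains in $\CS_x(\mathcal{L})$ have the same length $k$ — because the ``surgery'' genuinely changes the Hasse diagram near $x$: the old filter above $x$ (rank $k - \mathrm{rk}(x)$) is deleted, and a new element $[x,\hat 0]$ is glued in that covers all $y \lessdot x$ and is covered by all $[x,y']$ with $y' \lessdot x$, $y' < x$. One must verify that the ``detour'' through $[x,\hat 0]$ (going down to some $y < x$ then up into $[x,\hat 0]$) does not shorten or lengthen maximal chains relative to passing through $x$ and its filter in $\mathcal{L}$. I would handle this by a careful case analysis of covering relations in $\CS_x(\mathcal{L})$: show $y \lessdot [x,\hat 0]$ iff $y \lessdot x$ in $\mathcal{L}$ (using relations (3)/(4) to rule out other covers), show $[x,\hat 0] \lessdot [x,y]$ iff $\hat 0 \lessdot y$, and show that for ordinary $y,z$ the relation $y \lessdot z$ is inherited from $\mathcal{L}$ \emph{provided} $z$ is not of the form forcing interpolation by some $[x,\cdot]$. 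Once the covering relations are pinned down, gradedness and the rank formula follow by a straightforward chain-length bookkeeping, and the semilattice claim follows from the meet computation above.
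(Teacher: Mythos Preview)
The paper states this theorem without proof — after the statement the text moves directly to an example and then a proposition — so there is no argument in the paper to compare against.

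Your outline for the meet-semilattice claim (case analysis on the two kinds of elements) is reasonable. The rank computation, however, has a real gap. You repeatedly model a maximal chain to $[x,\hat 0]$ as climbing inside $\mathcal{L}_{<x}$ to some $y\lessdot x$ and then jumping, and in your last paragraph you explicitly set out to prove ``$y \lessdot [x,\hat 0]$ iff $y \lessdot x$ in $\mathcal{L}$.'' That characterization is incorrect, and it is exactly why your first count produced $\mathrm{rk}(x)+1+\mathrm{rk}(y)$ rather than $k-\mathrm{rk}(x)+\mathrm{rk}(y)+1$. Relation~(4) places $[x,\hat 0]$ above \emph{every} ordinary $y$ with $y\vee x\in\mathcal{L}$, which in the intended setting (a truncated intersection lattice, cf.\ \Cref{pl}) is a much larger set than $\mathcal{L}_{<x}$; the maximal such $y$ sit at rank $k-\mathrm{rk}(x)$, not at rank $\mathrm{rk}(x)-1$. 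Geometrically this is the statement that the new ``complementary'' stratum has codimension $k-\mathrm{rk}(x)+1$, and it is the actual source of the stated formula. Your corank patch at the end lands on the right number but never engages with relation~(4), so the gradedness and covering-relation analysis you propose would have to be redone: the ordinary elements covered by $[x,\hat 0]$ are the maximal $y$ (with $y\ngeq x$) for which $y\vee x$ exists, and you would need to verify that these all have the same rank $k-\mathrm{rk}(x)$.
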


\begin{example}
Let $G=\la\{2,6\}\ra$ be the genetic code and $\I(\A_{G})$ be the corresponding meet semilattice. 
Let $(1,2,345,6)\in \I(\A_{G})$. 
We denote this partition by $345$.
Then 

\begin{equation*}
\begin{split}
    \CS_{345}(\I(\A_G))& =\bigg(\I(\A_G)\setminus \I(\A_G)_{\geq 345} \bigg)~\bigsqcup~ \bigg\{[345,y] : y<345 \bigg\}\\
    & \cong ~\bigg(\I(\A_G)\setminus \I(\A_G)_{\geq 345} \bigg)~\bigsqcup~ \bigg\{(126,\pi) : \pi<(1,,2,345,6)\bigg\}\\
    & = \I(\A_{\la\{1,2,6\}\ra}),
\end{split} 
\end{equation*}
where $(126,\pi)$ denotes an unordered partition of $[6]$.
Observe that the genetic code $\la\{1,2,6\}\ra$ covers $\la\{2,6\}\ra$ with respect to the genetic order.

\begin{figure}[H]
    \centering
    \includegraphics[scale=0.4]{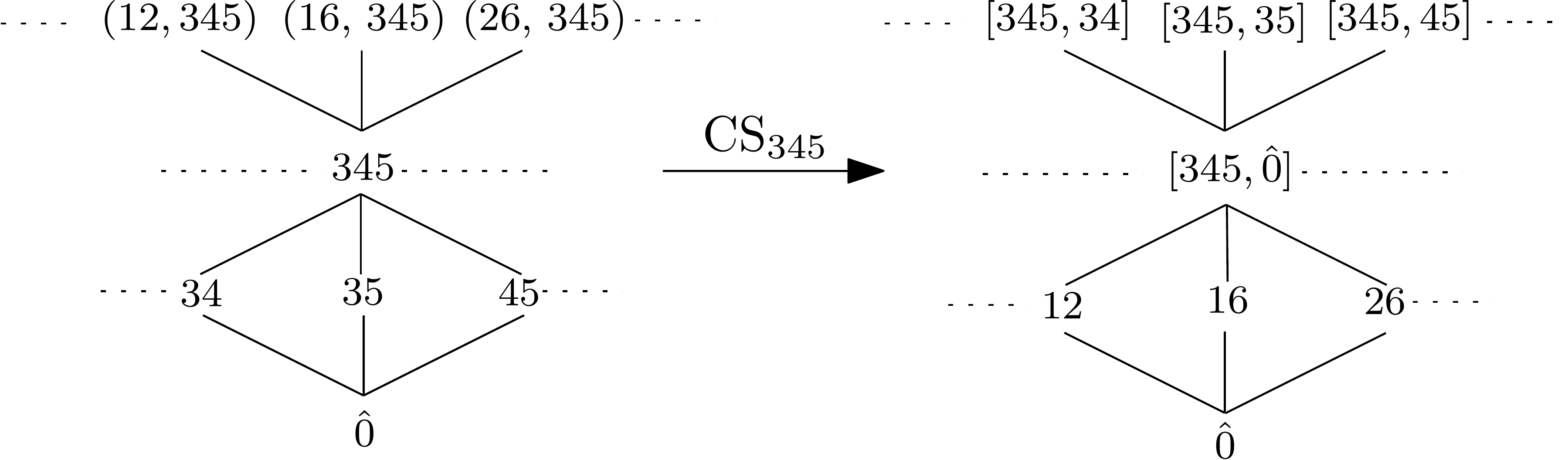}
    \caption{Combinatorial surgery along $345$}
    \label{}
\end{figure}
\end{example}
 
Let $G$ and $G'$ be two genetic codes of $m$-length vectors such that $G'$ covers $G$. 
It follows from \Cref{crgc} that there exists a subset $J\subseteq [m]$ with $S_m(G')=S_m(G)\cup J$.  
With this, now the following result is straightforward.
\begin{proposition}
$\CS_{J^c}(\I(\A_{G}))\cong \I(\A_{G'}) $.
\end{proposition}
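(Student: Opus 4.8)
The statement asserts that the combinatorial surgery operation on the meet semilattice $\I(\A_G)$ along the flat $J^c$ exactly reproduces the intersection poset $\I(\A_{G'})$ of the polygon space with the next genetic code. My plan is to make both sides explicit as posets of set partitions and then exhibit an order-preserving bijection. First I would recall (from \Cref{crgc} and \Cref{shortalpha}) the precise change in combinatorial data: $S_m(G') = S_m(G)\cup\{J\}$, and correspondingly $S(G') = (S(G)\setminus\{J^c\})\cup\{J\}$, so the only short subset that disappears is $J^c$ and the only one that appears is $J$. Using the description of $\I(\aalpha)$ from the earlier lemma as $\mathcal{L}_\alpha = \Pi_m(\alpha)\sqcup\overline{\Pi}_m(\alpha)$, the flat $J^c\in \I(\A_G)$ corresponds to the partition $\pi_0$ whose only non-singleton block is $J^c$ (this is a genuine flat for $G$ precisely because $J^c$ is $G$-short, i.e. $J$ is $G$-long, which is what it means for $J\notin S_m(G)$).

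The core of the proof is a bookkeeping comparison, split into the two families of elements in $\CS_{J^c}(\I(\A_G))$. Elements of type (1) are flats $Y$ of $\I(\A_G)$ with $Y\neq J^c$ and $Y\not\geq J^c$: since "$Y\geq J^c$ in the reverse-refinement order" means the partition of $Y$ has $J^c$ contained in a single block, the surviving type-(1) elements are exactly the $G$-partitions none of whose blocks contains $J^c$; these are precisely the partitions all of whose blocks are $G'$-short (a block containing $J^c$ would be forbidden, every other $G$-short block remains $G'$-short, and no new blocks are available below this threshold), together with the copies $\overline{\pi}$ that survive. Elements of type (2), the intervals $[J^c,Y]$ for $Y<J^c$ in $\I(\A_G)$, should be matched with the partitions of $[m]$ having $J$ as a block: merging a refinement of the $J^c$-block back with the complementary singletons into the single block $J$ is exactly the "collapse" that combinatorial surgery performs, and the join condition $y\vee J^c\in\mathcal L$ in relation (4) translates to: the partition obtained by merging $y$'s structure with $J$ is still a legal $G'$-flat. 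I would check that the four order relations in the definition of $\CS_{J^c}$ correspond, under this dictionary, to reverse refinement in $\I(\A_{G'})$ — relations (1) and (2) are immediate, (3) is the statement that a $J$-block partition refines down to the $G$-side, and (4) encodes that $J$-block flats sit below the flats of $\I(\A_G)$ not lying above $J^c$.

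The one subtlety that needs care, and which I expect to be the main obstacle, is the treatment of the disconnected flats — the $\overline{\pi}$ elements coming from $\overline{\Pi}_m$ and the possible change of connectivity status when passing from $\alpha$ to $\beta$ (equivalently from $G$ to $G'$). I would argue that because $S(G')$ differs from $S(G)$ only by swapping $J^c$ for $J$, the set of partitions $\pi$ for which $\mathrm{M}_{\alpha(\pi)}$ is disconnected is affected in a controlled way, and that the "doubling" of flats is respected by the surgery construction precisely because $\CS_{J^c}$ leaves untouched all flats not above $J^c$ and uniformly reindexes those below. In fact the cleanest route may be to verify the isomorphism blockwise using \Cref{lcbraid}: locally every flat of $\aalpha$ looks like a product of braid-arrangement intersection lattices $\prod_i \I(\B_{|J_i|})$, and combinatorial surgery along $J^c$ is exactly the Feichtner–Kozlov combinatorial blow-up operation (transported through the surgery reformulation) applied in the one factor indexed by $J^c$, which is known to turn $\Pi_{|J^c|}$ into the appropriate product; assembling these local isomorphisms and checking they glue gives the global statement. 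I would then close by noting that under the isomorphism $\phi$ identifying partitions with flats, this matches the passage $G\rightsquigarrow G'$, completing the proof.
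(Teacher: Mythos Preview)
The paper gives no proof: it calls the result ``straightforward'' on the strength of the definition and the worked example for $G=\la 26\ra$. Your plan --- identifying $\I(\A_G)$ with $\Pi_m(G)\sqcup\overline\Pi_m(G)$, sending type-(1) elements to the $G'$-partitions with no $J$-block and type-(2) elements $[J^c,y]$ to the partition obtained from $y$ by merging the singletons indexed by $J$ into the single block $J$ --- is exactly the bijection that example displays, so you are on the intended route and considerably more careful than the paper, which never addresses the $\overline\Pi_m$-layer you rightly flag.

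One genuine pitfall you do not mention: order relation~(4) in the definition of $\CS_x$, read literally, declares $y<[J^c,\hat0]$ whenever $y\vee J^c$ lies in $\I(\A_G)$. This condition is satisfied by any type-(1) element $y$ whose nontrivial blocks lie inside $J^c$ (for such $y$ the join with the $J^c$-partition is the $J^c$-partition itself), for instance $y$ consisting of a single pair $\{i,j\}\subset J^c$. But such a $y$ does \emph{not} refine the partition with block $J$ and singletons on $J^c$, so in $\I(\A_{G'})$ it does not sit below the image of $[J^c,\hat0]$. To make your order-isomorphism go through you will have to read relation~(4) with the additional hypothesis $y\wedge J^c=\hat0$ (equivalently, $y$ has no nontrivial block contained in $J^c$); this is presumably what the authors intend, and you should be prepared to supply that correction when you check the order relations.
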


\section{Cellular surgery on simple cell complexes and the proof of  \texorpdfstring{\Cref{mt}}{Lg}}\label{cscpc}
Let $K$ be a simple cell complex of dimension $n$ such that there is a subcomplex homeomorphic to the $k$-sphere $S^k$.
Let us denote this subcomplex by $KS^k$. 
Moreover, assume that for any $k$-simplices $\sigma, \sigma'\in KS^k$, $\mathrm{Lk}(\sigma,K)\cong\mathrm{Lk}(\sigma',K)\cong S^{n-k-1}$. 
\begin{definition}
The index $k$ cellular surgery on $K$ along $KS^k$ is defined in two steps:
\begin{enumerate}
    \item[Step 1:]  Truncate all cells whose closure intersects $KS^k$.
    \item[Step 2:] Let $D(KS^k)$ be the cellular disc with the boundary $KS^k$. Note that the boundary complex of the truncated part around $KS^k$ is $KS^k\times \mathrm{Lk}(\sigma,K)$ for $\sigma\in KS^k$. Now attach another simple cell complex $D(KS^k)\times \mathrm{Lk}(\sigma,K)$ to $K$ along $KS^k\times \mathrm{Lk}(\sigma,K)$.
\end{enumerate}
\end{definition}
In particular, if $\Tilde{K}$ denotes the cell complex obtained by the cellular surgery on $K$ then \[\Tilde{K}:= \bigg(K\setminus  KS^k\times D(\mathrm{Lk}(\sigma,K))\bigg) \underset{KS^k\times \mathrm{Lk}(\sigma,K)}{\bigcup}\bigg(D(KS^k)\times \mathrm{Lk}(\sigma,K) \bigg).\]

Let $K$ be a simple cell complex with free $\Z_2$-action such that $S^k$
embeds in $K$ as an $\Z_2$-equivariant subcomplex. 
Assume that, for any $k$-simplices $\sigma, \sigma'\in KS^k$ we have $\mathrm{Lk}(\sigma,K)\cong\mathrm{Lk}(\sigma',K)\cong S^{n-k-1}$ such that the quotient of $\mathrm{Lk}(\sigma,K)$ by $\Z_2$-action is again a cell complex.
With these assumptions, we are ready to define the projective version of a cellular surgery on the quotient of $K$ by the $\Z_2$-action.
\begin{definition}
Let $\mathbb{P}KS^k$ and $\overline{K}$ be the quotients of $KS^k$ and $K$ by the $\Z_2$-action, respectively.
The index $k$ projective cellular surgery on $\overline{K}$ along $\mathbb{P}KS^k$ is a cell complex $\Tilde{\overline{K}}$ defined as 
\[\Tilde{\overline{K}}:=\bigg(\overline{K}\setminus KS^k\times_{\Z_2} D(\mathrm{Lk}(\sigma,K))\bigg) \underset{KS^k\times_{\Z_2} \mathrm{Lk}(\sigma,K)}{\bigcup}\bigg(D(KS^k)\times_{\Z_2} \mathrm{Lk}(\sigma,K) \bigg),\]
where $KS^k\times_{\Z_2} D(\mathrm{Lk}$ denotes the quotient of $KS^k\times D(\mathrm{Lk}$ by diagonal $\Z_2$-action. Similarly, $KS^k\times_{\Z_2} \mathrm{Lk}(\sigma,K)$ and $D(KS^k)\times_{\Z_2} \mathrm{Lk}(\sigma,K)$ are defined.
\end{definition}

Let $CA_{m-1}$ be the Coxeter complex corresponding to the braid arrangement $\B_{m}$. 
Let $X\in \mathrm{Min}(\B_{m})$. Recall that $X$ can be represented by the partition of $[m]$ with at most one block of size greater equal $2$. Let $X=J-i_{1}-i_{2}-\dots- i_{k}$. 
Consider the subcollection 
\[\B_{X}=\{H\in \B_{m} : X\subset H\}\] of $\B_{m}$. 
It is easy to see that the following isomorphism  \[\B_{X}=\{H_{ij}\in \B_{m} : \{i,j\}\subset J\}\cong \B_{|J|}.\] 
Let $\sigma \in S_{X}$ be a cell such that $\dim(\sigma)=\dim(S_{X})$. 
From the above discussion, it is clear that $\mathrm{Lk}(\sigma, CA_{m-1})\cong CA_{|J|-1}$.
\begin{definition}\label{cs}
Let $X\in \mathrm{Min}(\B_{m})$.
Cellular surgery on $CA_{m-1}$ along $S_{X}$ is defined as
\begin{enumerate}
\item Truncate all cells which are adjacent to $S_{X}$.
\item Note that the boundary complex of the truncated part around $S_{X}$ is $X\times CA_{|J|-1}$. Let $D(S_{X})$ be the cellular disc whose boundary is $S_{X}$. Attach the complex $D(S_{X})\times CA_{|J|-1}$ along the boundary $S_{X}\times CA_{|J|-1}$. 
\end{enumerate}

\end{definition}

Similarly, we can define a cellular surgery on the projective Coxeter complex by replacing $S_{X}$ and $CA_{m-1}$ by $\mathbb{P}S_{X}$ and $\mathbb{P}CA_{m-1}$ respectively in the \Cref{cs}. Note that after truncating cells adjacent to $\mathbb{P}S_{X}$, the boundary of the truncated part will be $S_{X}\times_{O(1)} CA_{|J|-1}$. Accordingly, attach the $D(S_{X})\times_{O(1)} CA_{|J|-1}$ to the truncated complex.  

\begin{remark} We have the following observations.
\begin{enumerate}
\item It is easy to see that truncation of all cells adjacent to $S_{X}$ in $CA_{m-1}$ is an equivalent operation to removing $S^{m-|J|-1}\times D^{|J|-1}$ tubular neighbourhood of $S_{X}$, since $S_{X}\cong S^{m-|J|-1}$ and $ D^{|J|-1}$ is the $(|J|-1)$-dimensional disc. In step $2$ of the above definition, we attach $D^{m-|J|}\times S^{|J|-2}$ since, $CA_{|J|-1}\cong S^{|J|-2}$. Therefore, the \Cref{cs} is a cellular analogue of the surgery on manifolds.
    
\item If $dim(X)=0$ then the cellular surgery on $CA_{m-1}$ along $S_{X}$ gives the cell complex homeomorphic to $S^{1}\times S^{m-3}$. On the other hand, the cellular surgery on $\mathbb{P}CA_{m-1}$ along $\mathbb{P}S_{X}$ gives a cell complex which is homeomorphic to $\R P^{m-2}\#\R P^{m-2}$. 

\end{enumerate}

\end{remark}

\begin{example} \normalfont{
Let $X=123$ be an element of the minimal building set $\mathrm{Min}(\B_{4})$. Note that $S_{123}=S^0=\{123, \overline{123}\}$ is a $0$-dimensional sphere.  We can see that the subarrangement $\B_{X}$ is isomorphic to the braid arrangement $\B_{3}$. Note that there are $6$ triangles that are adjacent to $123$. Therefore, if we truncate all these triangles, the boundary of the truncated part will be a hexagonal circle, $CA_{2}$ (see the red hexagonal circle in \Cref{csoncc}). 
Similarly, truncating cells adjacent to $\overline{123}$ we get another hexagonal circle. 
Therefore, truncating cells adjacent to $S_{123}$ creates the disjoint union of two hexagonal circles as the boundary of the truncated part. 
Note that this boundary is isomorphic to the complex $S_{123}\times CA_{2}$. 
Let $D(S_{123})$ be the cellular disc with the boundary $S_{123}$.
In the next step of cellular surgery along $S_{123}$ we have to attach a hexagonal cylinder $D(S_{123})\times CA_{2}$, to the truncated complex along with the boundary complex  $S_{123}\times CA_{2}$ of the truncated part in the step-$1$ .
Now it is easy to see that the complex obtained after the cellular surgery is the torus cellulated by $18$ squares and $12$ triangles.}

\begin{figure}
$\includegraphics[scale=0.35]{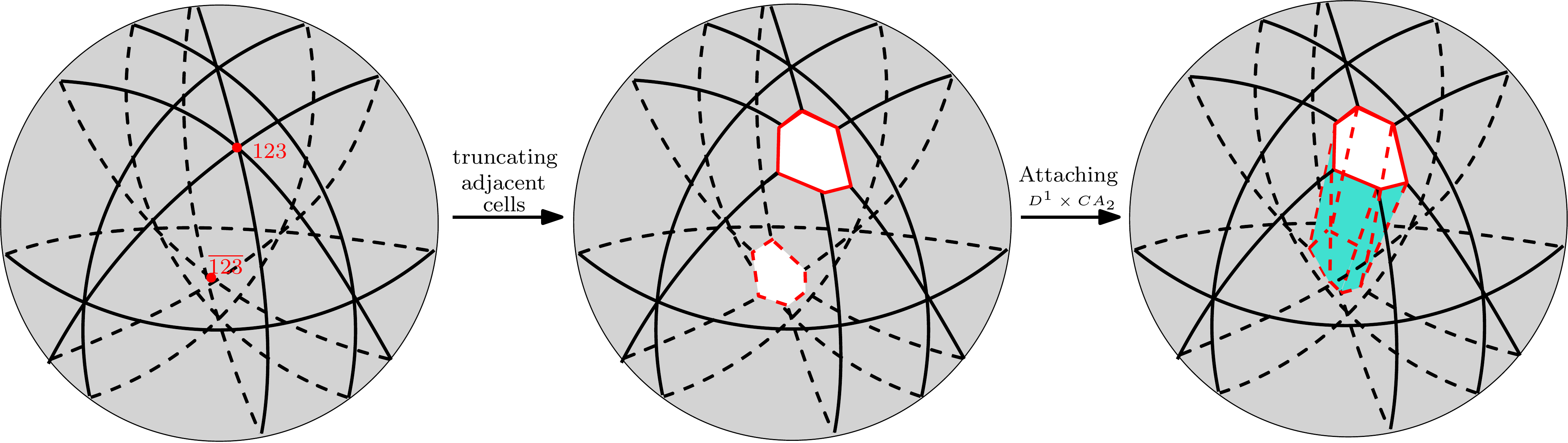}$
\vspace{.3cm}
\caption{Cellular surgery on $CA_{3}$ along $S_{123}$.}
\label{csoncc}
\end{figure}
\end{example}\

\begin{example}
\normalfont{
Let $X\in \mathrm{Min}(\B_{4})$ such that it is represented by an unordered partition $123-4$. Without loss of generality we can omit the singletons and write $X=123$. Consider the $0$-dimensional projective Coxeter complex $\mathbb{P}S_{123}$ in $\mathbb{P}X$. Similarly, as in the previous example we have $\mathbb{P}\B_{\mathbb{P}X}\cong \B_{3}$. Now truncating cells of $\mathbb{P}CA_{3}$ adjacent to $\mathbb{P}S_{123}$ gives boundary of truncated part to be $S_{123}\times_{O(1)}CA_{2}$, a hexagonal circle.  Note that the boundary $\partial(D(S_{123})\times_{O(1)} CA_{2})=S_{123}\times_{O(1)}CA_{2}$.
Now in the next step we attach $D(S_{123})\times_{O(1)} CA_{2}$ to $S_{123}\times_{O(1)CA_{2}}$. Note that $D(S_{123})\times_{O(1)} CA_{2}$ is a cell complex homeomorphic to the Mobius band. 
Now it is easy to see that the resulting complex after the projective cellular surgery is cellulated by $6$ triangles and $9$ squares (see \Cref{csonpcs}).}
\begin{figure}
$\includegraphics[scale=0.37]{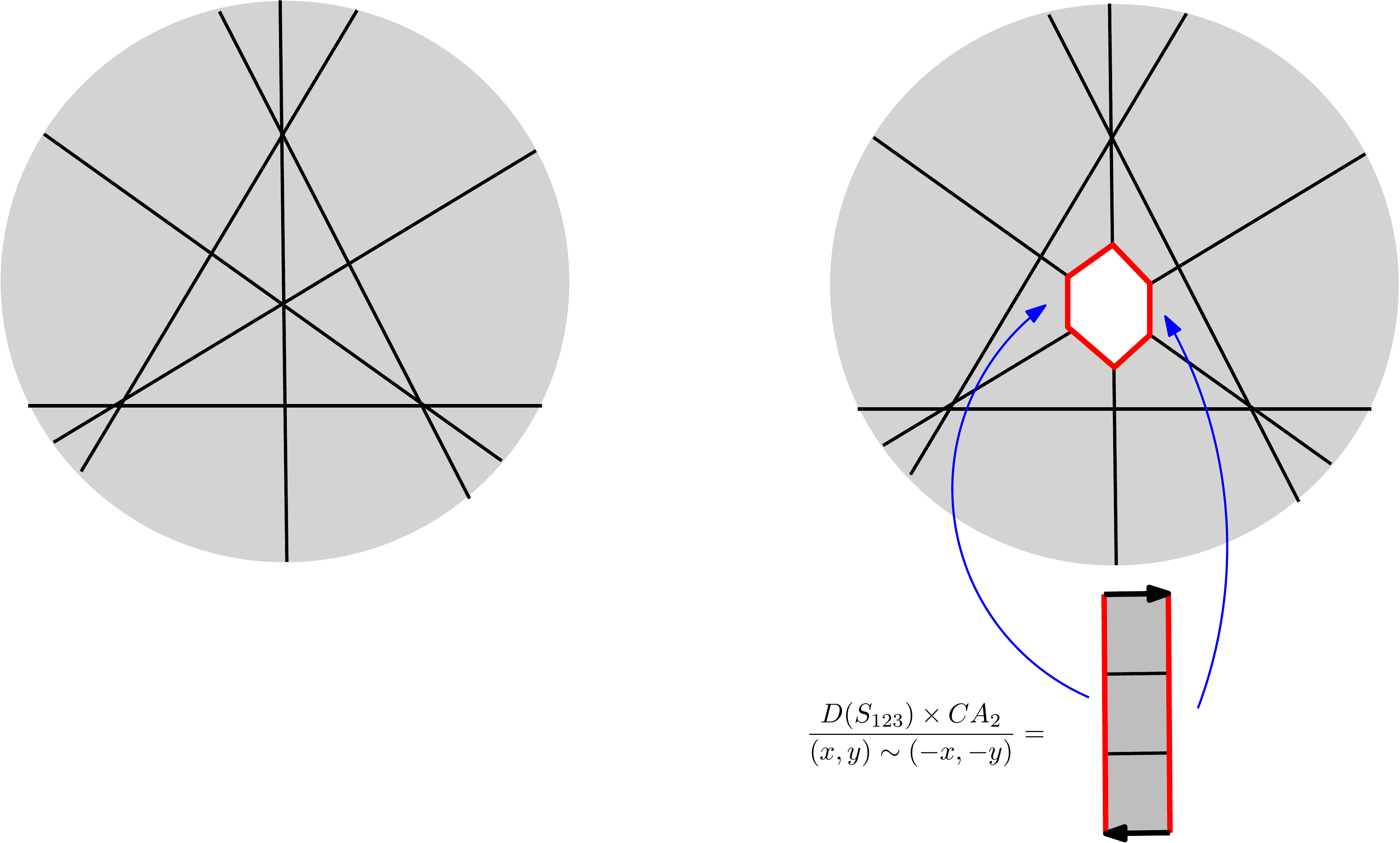}$
\caption{Cellular surgery on $\mathbb{P}CA_{3}$ along $\mathbb{P}S_{123}$.}
\label{csonpcs}
\end{figure} 
\end{example}

Let $\la m\ra=G_{1}\preceq G_{2}\preceq \dots \preceq G_{r}=G$ be a saturated chain of genetic codes such that $G_{i+1}$ covers $G_{i}$ for $1\leq i \leq r-1$ and  $S_{m}(G)=\{m\} \displaystyle \cup_{i=1}^{r-1} J_{i}$. 
Note that $m\notin J_{i}^c$. Therefore, $J_{i}^c$'s are short subsets with respect to the genetic code $\la m\ra$. Note that each $J_{i}^c$ represents the partition $J_{i}^c-j_{1}-j_{2}-\dots- j_{k}$ of $[m]$. 
Now it follows from the \Cref{min} that $\{J_{1}^c,\dots,J_{r-1}^c\}\subseteq \mathrm{Min}(\B_{m-1})$.
Consider the collections \[\mathcal{G}_{G}=\bigg\{S_{J_{1}^c}, S_{J_{2}^c},\dots,S_{J_{r-1}^c}\bigg\}\] and \[\mathbb{P}\mathcal{G}_{G}=\bigg\{\mathbb{P}S_{J_{1}^c}, \mathbb{P}S_{J_{2}^c},\dots,\mathbb{P}S_{J_{r-1}^c}\bigg\}.\]

\begin{theorem}\label{main thm}
Let $G$ be the genetic code of a length vector $\alpha$. Then
the iterated cellular surgery on $CA_{m-2}$ (respectively on $\mathbb{P}CA_{m-2}$) along the elements of $\mathcal{G}_{G}$ (respectively $\mathbb{P}\mathcal{G}_{G}$) produces the cell complex $\tilde{K}_{\alpha}$ (respectively $\tilde{\overline{K}}_{\alpha}$) homotopy equivalent to $\kalpha$ (respectively $\kbalpha$).
\end{theorem}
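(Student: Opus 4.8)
The plan is to proceed along a saturated chain of genetic codes $\la m\ra = G_1 \precdot G_2 \precdot \cdots \precdot G_r = G$, matching each step of the iterated cellular surgery on the combinatorial side with a single equivariant surgery on the manifold side, and then arguing that both sides agree \emph{cell-by-cell up to homotopy}. By \Cref{Kcox} the starting complex $CA_{m-2}$ (respectively $\mathbb{P}CA_{m-2}$) is precisely $\mathrm{K}_{\la m\ra}$ (respectively $\overline{\mathrm{K}}_{\la m\ra}$), so it suffices to show: whenever $G'$ covers $G''$ via $S_m(G') = S_m(G'')\cup J$, the cellular surgery on $\mathrm{K}_{G''}$ along $S_{J^c}$ produces a complex homotopy equivalent to $\mathrm{K}_{G'}$, and likewise in the projective case. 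The iterate then follows immediately since, by the construction of $\mathcal{G}_G$, the subset $J_i^c$ at step $i$ is an element of $\mathrm{Min}(\B_{m-1})$ (by \Cref{min}, as $m\notin J_i^c$), so $S_{J_i^c}$ is a well-defined subcomplex by \Cref{icc}, and the links of its top-dimensional cells are all isomorphic to $CA_{|J_i^c|-1} \cong S^{|J_i|-2}$, exactly the hypothesis needed to perform the cellular surgery of \Cref{cs}.

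The heart of the argument is the single-step comparison. First I would identify, on the manifold side, what the cellular surgery does topologically: truncating all cells of $\mathrm{K}_{G''}$ adjacent to $S_{J^c}$ removes a trivial tubular neighbourhood $S^{|J^c|-1}\times D^{|J|-1} = S^{m-1-|J|}\times D^{|J|-1}$ of the embedded sphere $S_{J^c}\cong S^{m-1-|J|}$ (here $|J^c| = m-|J|$ inside $[m-1]$, and the codimension count uses $\dim \mathrm{M}_{G''} = m-3$), and gluing in $D(S_{J^c})\times CA_{|J|-1} = D^{m-|J|}\times S^{|J|-2}$ along the common boundary $S^{m-1-|J|}\times S^{|J|-2}$. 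This is exactly an $O(1)$-equivariant surgery of index $|J|-2$ in the sense of \Cref{ht}; the equivariance is visible because the $\Z_2$-action on $CA_{m-2}$ is the antipodal action, it restricts freely to $S_{J^c}$, and it acts antipodally on the normal disc factor. By Hausmann's \Cref{ht}, the result of this surgery is homeomorphic to $\mathrm{M}_{G'}$. The projective case is handled identically using \Cref{gdscr} in place of \Cref{ht}: the projective cellular surgery realizes the operation $\mathbb{P}\mathrm{S}_{|J|-2}$, whose output is homeomorphic to $\overline{\mathrm{M}}_{G'}$.

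What remains is to upgrade ``homeomorphic to $\mathrm{M}_{G'}$'' to ``carries a cell structure homotopy equivalent to $\mathrm{K}_{G'}$''. Here I would use the local description of $\aalpha$ from \Cref{lcbraid}: the cell structure $\mathrm{K}_{G'}$ is the one induced by the submanifold arrangement $\A_{G'}$, and near the newly created sphere $D(S_{J^c})\times CA_{|J|-1}$ the arrangement looks like a product of braid arrangements, while $\mathrm{K}_{G''}$ away from $S_{J^c}$ is combinatorially unchanged. On the combinatorial side, the surgery replaces the meet-semilattice $\I(\A_{G''})$ by $\CS_{J^c}(\I(\A_{G''})) \cong \I(\A_{G'})$ by the Proposition following the combinatorial surgery definition, and the face poset of $\Tilde{K}_{G''}$ is built from this surgered lattice by the standard ``polytopal realization of a simple poset'' procedure. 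Thus $\Tilde{K}_{G''}$ and $\mathrm{K}_{G'}$ have the same face poset outside a collar, and inside the collar both are regular cell decompositions of the same trivial bundle $D^{m-|J|}\times S^{|J|-2}$; a regular-CW / nerve argument (or simply observing that both are simple cell complexes with isomorphic barycentric subdivisions) then gives the homotopy equivalence. Iterating along the saturated chain, and using \Cref{crgc} to guarantee that every covering step has the required form $S_m(G_{i+1}) = S_m(G_i)\cup J_i$, yields $\Tilde{K}_\alpha \simeq \mathrm{K}_\alpha$ and $\Tilde{\overline{K}}_\alpha \simeq \overline{\mathrm{K}}_\alpha$.

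\textbf{Main obstacle.} I expect the genuine difficulty to be the last step: controlling the cell structure, not merely the homeomorphism type. Hausmann's theorem is purely topological, so it does not directly tell us that the surgered \emph{cell complex} $\Tilde{K}$ is the arrangement-induced complex $\mathrm{K}_{G'}$; one must check that the truncation-and-attachment recipe of \Cref{cs} produces cells combinatorially matching those cut out by $\A_{G'}$, which is why the statement is phrased only up to homotopy equivalence rather than isomorphism. Making the collar-matching argument precise — ensuring the gluing region $S_{J^c}\times CA_{|J|-1}$ is identified compatibly on both sides, and that no global monodromy obstructs assembling the local product pictures — is the technical crux, and is where the hypothesis that each cell of $\mathrm{Lk}(\sigma, K)$ is simple and that the links of all top cells of $S_{J^c}$ agree (so the attached piece is genuinely a product) does the real work.
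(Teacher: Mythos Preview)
Your inductive scaffolding matches the paper exactly: reduce to a single covering step $G''\precdot G'$ with $S_m(G')=S_m(G'')\cup J$, identify the cellular surgery with Hausmann's equivariant surgery, and then upgrade to a statement about cell complexes. The appeal to \Cref{Kcox}, \Cref{crgc}, \Cref{icc}, \Cref{min} is appropriate.

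There is, however, a genuine error in your dimension bookkeeping that makes the middle paragraph internally inconsistent. The subcomplex $S_{J^c}\subset \mathrm{K}_{G''}$ corresponds to the partition of $[m-1]$ with $J^c$ as its single non-singleton block; this partition has $|J|$ blocks, so $S_{J^c}\cong CA_{|J|-1}\cong S^{|J|-2}$, \emph{not} $S^{|J^c|-1}=S^{m-1-|J|}$. Dually, the link of a top cell of $S_{J^c}$ is $CA_{|J^c|-1}\cong S^{m-|J|-2}$, \emph{not} $S^{|J|-2}$. You have interchanged the sphere and its link (check against the example $m=5$, $J=\{1,5\}$, where $S_{234}=S^0$ and the link is the hexagonal $S^1$). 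Consequently your sentence ``This is exactly an $O(1)$-equivariant surgery of index $|J|-2$'' is not consistent with your own formulas: with your dimensions the surgery would have index $m-1-|J|$. Once the swap is corrected, everything lines up with \Cref{ht}.

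For the step you flag as the obstacle, the paper's argument is more direct than your proposed face-poset matching via $\CS_{J^c}(\I(\A_{G''}))\cong\I(\A_{G'})$. Rather than compare intersection posets and invoke a nerve/barycentric argument, the paper simply collapses the attached piece $D(S_{J^c})\times CA_{|J^c|-1}$ onto its second factor, which it identifies with the subcomplex $S_J\subset \mathrm{K}_{G'}$. The resulting complex is literally $(\mathrm{K}_{G''}\setminus S_{J^c})\cup S_J$, and \Cref{shortalpha} (which says $S(G')=(S(G'')\setminus\{J^c\})\cup\{J\}$) then shows this is exactly the arrangement-induced complex $\mathrm{K}_{G'}$. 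This sidesteps the collar-matching issue you anticipated: no global gluing analysis is needed because the collapse produces the target complex on the nose. Your combinatorial-surgery route would presumably also work but is more circuitous.
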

\begin{proof}
Following the inductive argument, it is enough to prove the theorem for a saturated chain of length $1$. Let $G\preceq G'$ be a saturated chain of length $1$. 
It follows from the \Cref{crgc} that, $S_{m}({G'})=S_{m}(G)\cup J$ for some $J\subset [m]$. 
Since $J^c$ is the maximal short subset (i.e., adding an extra element in $J^c$ makes it into long),
the subcomplex $S_{J^{c}}$ of $\mathrm{K}_G$ is isomorphic to the Coxeter complex $CA_{|J|-1}$ of dimension $|J|-2$.
Note that $J$ is short subset with respect to the genetic code $G'$. 
We also have $G'=\la G,J\ra$. Since $J$ is maximal short subset the subcomplex $S_J$ of $\mathrm{K}_{G'}$ represents the Coxeter complex $CA_{m-2-|J|}$. 
Now we see that the $\mathrm{Lk}(\sigma, \mathrm{K}_G)$ is isomorphic to the Coxeter complex for $\sigma\in S_{J^c}$ with $\dim(\sigma)=|J|-1$.
Recall that $\mathrm{M}_{G}$ is a PL-manifold. 
Therefore, $\mathrm{Lk}(\sigma, \mathrm{K}_G)\cong S^{n-|J|-2}$ if $\dim(\sigma)=|J|-1$.
The cell structure on $S^{n-|J|-2}$ is induced by the collection 
\[\{N_{i,j} : \{i,j\}\subset J^c\}.\]
Note that the above collection is isomorphic to the braid arrangement $\mathcal{B}_{m-|J|}$.
Therefore, $\mathrm{Lk}(\sigma, \mathrm{K}_G)\cong CA_{m-2-|J|}$.
Let $\Tilde{\mathrm{K}}_{G}$ be the complex obtained by the index $|J|-1$  cellular surgery on $\mathrm{K}_{G}$ along $S_{J^{c}}$. Then
\[\Tilde{\mathrm{K}}_{G}= \bigg(\mathrm{K}_{G}\setminus S_{J^{c}}\times D(CA_{m-2-|J|})\bigg) \underset{S_{J^c}\times CA_{m-2-|J|}}{\bigcup}\bigg(D(S_{J^c})\times CA_{m-2-|J|}\bigg).\]
Now if we collapse $D(S_{J^c})\times CA_{m-2-|J|}$ onto $CA_{m-2-|J|}$, $\Tilde{\mathrm{K}}_{G}$ becomes homotopy equivalent the complex $(\mathrm{K}_{G}\setminus S_{J^c})\cup S_{J}$. 
It follows from \Cref{Hausmann} that $\Tilde{\mathrm{K}}_{G}\cong \mathrm{M}_{G'}$.
Note that collapsing $D(S_{J^c})\times CA_{m-2-|J|}$ onto $CA_{m-2-|J|}$ doesn't change the homeomorphism type of $\Tilde{\mathrm{K}}_{G}$.
Therefore, $(\mathrm{K}_{G}\setminus S_{J^c})\cup S_{J}\cong\mathrm{M}_{G'}$.
Now it follows from \Cref{shortalpha} that the cell complex $(\mathrm{K}_{G}\setminus S_{J^c})\cup S_{J}$ is induced from the submanifold arrangement $\aalpha$. 
Therefore, $(\mathrm{K}_{G}\setminus S_{J^c})\cup S_{J}=\mathrm{K}_{G'}$.

Let $\mathbb{P}S_{J^c}$ be the projective Coxeter complex $\mathbb{P}CA_{|J|-1}$ in $\overline{\mathrm{K}}_{G}$ represented by a partition $J^c$ of $[m]$ and let $\mathbb{P}S_{J}$ be the subcomplex of $\Tilde{\overline{\mathrm{K}}}_{G'}$ isomorphic to the projective Coxeter complex $\mathbb{P}CA_{m-2-|J|}$.
The index $|J|-1$ projective cellular surgery on $\overline{\mathrm{K}}_{G}$ along $\mathbb{P}S_{J^c}$ gives \[\Tilde{\overline{\mathrm{K}}}_{G}= \bigg(\overline{\mathrm{K}}_{G}\setminus S_{J^c}\times_{O(1)} D(CA_{m-2-|J|})\bigg)\underset{S_{J^c}\times_{O(1)} CA_{m-2-|J|} }{\bigcup}\bigg(D(S_{J^c})\times_{O(1)} CA_{m-2-|J|}\bigg).\] 
Note that $S_{J^{c}}\times_{O(1)} D(CA_{m-2-|J|})$ and $D(S_{J^c})\times_{O(1)} CA_{m-2-|J|}$ are the total spaces of disc bundles over $\mathbb{P}S_{J^c}$ and $\mathbb{P}S_{J}$, respectively. 
Therefore, $J^{c}\times_{O(1)} D(CA_{m-2-|J|})$ and $D(J^c)\times_{O(1)} CA_{m-2-|J|}$ are homotopy equivalent to $\mathbb{P}S_{J^c}$ and $\mathbb{P}CA_{m-2-|J|}$, respectively. 
Therefore, $\Tilde{\overline{\mathrm{K}}}_{G}$ is homotopy equivalent to the complex $(\overline{\mathrm{K}}_{G}\setminus \mathbb{P}S_{J^c})\cup \mathbb{P}S_{J}$. 
Now the theorem follows from similar arguments as did for the cellular surgery.
\end{proof}

Since the projective cellular surgery along zero dimensional subspaces coincides with the blow-up, the following result is straightforward.
\begin{corollary}
Let $\la\{k,m\}\ra$ be the genetic code of $\alpha$. Then $\kbalpha$ is obtained from the  $\mathbb{P}CA_{m-2}$ by an iterated blow-up along the subspaces $\{1,2,\dots,\hat{i},\dots,m\}$ for $: 1\leq i\leq k$.
\end{corollary}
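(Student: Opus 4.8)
The plan is to deduce the corollary as a direct specialization of \Cref{main thm} together with the remark that index-$0$ projective cellular surgery coincides with a blow-up along a point. First I would unwind what the genetic code $\la\{k,m\}\ra$ says about short subsets: a subset $J\ni m$ is short precisely when $J\leq\{k,m\}$ in the partial order of \Cref{gc}, i.e.\ $J=\{a,m\}$ with $a\le k$ or $J=\{m\}$. Hence the sets $J_i$ appended along a saturated chain $\la m\ra = G_1 \precdot \dots \precdot G_{k+1} = \la\{k,m\}\ra$ (for instance the chain $\la m\ra\precdot\la\{1,m\}\ra\precdot\dots\precdot\la\{k,m\}\ra$, which is saturated by the covering computations in \Cref{prop: coveringrel}) are exactly the two-element sets $J_i=\{i,m\}$ for $1\le i\le k$, so $|J_i|=2$ and the surgery index is $|J_i|-2 = 0$ each time.

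Next I would feed this into the projective version of \Cref{main thm}: $\kbalpha$ is obtained from $\mathbb{P}CA_{m-2}=\overline{\mathrm{K}}_{\la m\ra}$ (the identification being \Cref{Kcox}) by iterated index-$0$ projective cellular surgery along the subcomplexes $\mathbb{P}S_{J_i^c}$, where $J_i^c = \{1,2,\dots,\widehat{i},\dots,m-1,m\}\setminus\{m\} = [m]\setminus\{i,m\}$; writing this as a partition of $[m]$ it is the block $\{1,\dots,\widehat{i},\dots,m-1\}$ together with singletons, which after omitting singletons is exactly the subspace $\{1,2,\dots,\widehat{i},\dots,m\}$ named in the statement (here one should note $J_i^c$ does not contain $m$, so $m$ is one of the omitted singletons; the indexing in the corollary runs over $1\le i\le k$, matching the $k$ surgeries). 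Then I invoke the observation recorded just before the corollary — that projective cellular surgery along a $0$-dimensional subspace is the same operation as a blow-up — to replace each index-$0$ surgery by the corresponding blow-up, concluding that $\kbalpha$ is the iterated blow-up of $\mathbb{P}CA_{m-2}$ along $\{1,2,\dots,\widehat{i},\dots,m\}$ for $1\le i\le k$.

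The only genuinely substantive point, and the step I expect to require the most care, is verifying that the blow-ups along the various $J_i^c$ can indeed be performed in a single pass without interference — i.e.\ that the $0$-dimensional subspaces $S_{J_i^c}$ are pairwise disjoint (equivalently, the partitions $J_i^c$ are pairwise incomparable in the intersection lattice, which holds because no two of the sets $[m]\setminus\{i,m\}$ and $[m]\setminus\{j,m\}$ with $i\ne j$ contain one another) and that each blow-up locus survives intact in the complex produced by the previous surgeries. For $0$-dimensional loci this is automatic since truncating a vertex only affects that vertex and its star, and distinct such vertices have disjoint open stars in the relevant complexes; this is also why the order of the $k$ blow-ups is irrelevant, matching the classical fact that De Concini–Procesi blow-ups of minimal building set elements of the same dimension commute. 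I would also double-check the edge cases $k=1$ (a single blow-up, consistent with $\malpha$ being close to $S^{m-3}$) and the degenerate situation $k=m-2$, but these follow the same template. With disjointness and survival of loci in hand, the corollary is immediate from \Cref{main thm}.
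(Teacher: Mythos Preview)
Your proposal is correct and follows exactly the paper's approach: the paper's entire proof is the one-line observation (stated just before the corollary) that projective cellular surgery along a zero-dimensional subspace coincides with a blow-up, so the result is immediate from \Cref{main thm}. Your explicit saturated chain $\la m\ra\precdot\la\{1,m\}\ra\precdot\cdots\precdot\la\{k,m\}\ra$ and computation $J_i=\{i,m\}$, $|J_i|-2=0$ simply spell out what the paper leaves implicit; the extra care about disjointness of the $0$-dimensional loci is sound but more than the paper supplies.
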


Now we characterize planar polygon spaces that are $\overline{\M}_{0}^{m}(\R)$. 

\begin{proposition}
Let $\alpha=(\alpha_1,\dots,\alpha_m)$
be a generic length vector. Then there is a homeomorphism $\overline{\M}_{0}^{m}(\R)\cong \kbalpha$ if and only if the genetic code of $\alpha$ is $\la\{45\}\ra$.
\end{proposition}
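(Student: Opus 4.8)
The statement to prove is: for a generic length vector $\alpha = (\alpha_1,\dots,\alpha_m)$, there is a homeomorphism $\overline{\M}_0^m(\R) \cong \kbalpha$ if and only if the genetic code of $\alpha$ is $\la\{45\}\ra$. The plan is to combine Kapranov's theorem (\Cref{kapra}) with the main surgery theorem (\Cref{main thm} / \Cref{mt}) and the iterated blow-up corollary, and then pin down exactly which genetic code makes the surgery description coincide with the iterated blow-up along the \emph{entire} minimal building set.

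First I would recall that by \Cref{kapra}, $\overline{\M}_0^m(\R) \cong (\mathbb{P}CA_{m-2})_{\#}$, the space obtained from the projective Coxeter complex $\mathbb{P}CA_{m-2}$ by iterated blow-ups along \emph{all} of $\mathrm{Min}(\mathbb{P}(\B_{m-1}))$. On the other hand, by the Corollary following \Cref{main thm}, when the genetic code of $\alpha$ is $\la\{k,m\}\ra$, the complex $\kbalpha$ is obtained from $\mathbb{P}CA_{m-2}$ by iterated blow-up along the subspaces $\{1,2,\dots,\hat i,\dots,m\}$ for $1\le i\le k$ (here projective cellular surgery along a zero-dimensional subspace coincides with blow-up). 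For the surgery to reproduce \emph{all} of $\overline{\M}_0^m(\R)$, two things must happen: (i) every surgery in the iterated chain must be zero-dimensional (i.e. an actual blow-up), which forces every gene $J$ appearing in a saturated chain from $\la m\ra$ to $G$ to have $|J|=2$, hence forces $G$ to be monogenic of the form $\la\{k,m\}\ra$; and (ii) the collection of subspaces blown up must be exactly $\mathrm{Min}(\B_{m-1})$ restricted to the relevant codimension, i.e. all $\binom{m-1}{?}$ hyperplane-type intersections — which, tracing the Corollary, are the $\{1,\dots,\hat i,\dots,m\}$ for $1\le i\le k$. Matching cardinalities, one needs $k = m-1$ so that these subspaces exhaust all codimension-one strata of the minimal building set. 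The genetic code $\la\{m-1,m\}\ra$ then forces $S_m(\alpha)$ to consist of all $2$-element subsets containing $m$, which by the short-subset reconstruction (\Cref{shortalpha} and the Hausmann–Rodriguez theorem) pins down $S(\alpha)$, and a direct genericity/ordering check shows this happens precisely when $m=5$, giving the code $\la\{45\}\ra$; for $m\neq 5$ no generic length vector realizes the needed code, or the dimension count fails.

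For the converse direction (the "if"), I would take $\alpha$ with genetic code $\la\{45\}\ra$, so $m=5$, $\malpha$ has dimension $2$, and $\mathbb{P}CA_3$ is the projective plane cellulated by $12$ triangles; the minimal building set $\mathrm{Min}(\B_4)$ consists of the $0$-dimensional strata corresponding to $3$-element subsets of $[4]$ together with (possibly) $4$-element ones, and the four blow-ups along $\{1,2,\hat3\text{ or }\hat4\dots\}$-type points are exactly Kapranov's blow-ups for $\overline{\M}_0^5(\R)$ (which is tiled by $12$ pentagons = associahedra $K_4$). Invoking \Cref{main thm}, $\kbalpha$ is obtained by precisely this iterated surgery, and collapsing the bundle pieces as in the proof of \Cref{main thm} gives a homeomorphism (not just homotopy equivalence, since the collapse does not change homeomorphism type) $\kbalpha \cong (\mathbb{P}CA_3)_\# \cong \overline{\M}_0^5(\R)$.

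The main obstacle I expect is the "only if" direction: ruling out \emph{every other} genetic code. One must argue that (a) any non-monogenic $G$, or any monogenic $\la\{g_1,\dots,g_r,m\}\ra$ with $r\ge 2$, produces at least one surgery of positive index in the saturated chain, so $\kbalpha$ is not obtained purely by blow-ups and hence cannot be $\overline{\M}_0^m(\R)$ on homological or tiling grounds (e.g. the top cells of $\kbalpha$ would not all be associahedra, or the dimensions/Betti numbers would not match those computed by Hausmann–Knutson for $\mbalpha$ versus Kapranov's count); and (b) among the monogenic codes $\la\{k,m\}\ra$, only $k=m-1$ gives the full minimal building set, and the genericity constraint $\sum_i \pm\alpha_i \neq 0$ together with $\alpha_1\le\dots\le\alpha_m$ forces $m=5$. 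Making (a) airtight may require a clean invariant distinguishing "blow-up only" compactifications — the cleanest is probably to compare the number of top-dimensional cells (Kapranov: $(m-1)!/2$ associahedra) against the cell count of $\kbalpha$, which differs once any surgery has index $\ge 1$. I would isolate this cardinality argument as the crux lemma and keep the rest as bookkeeping on genetic codes already developed in \Cref{pre}.
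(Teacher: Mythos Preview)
Your proposal takes a substantially different and more circuitous route than the paper, and the ``only if'' direction as you have sketched it has genuine gaps.

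The paper's argument is a two-line facet count. By Devadoss/Kapranov, $\overline{\M}_0^m(\R)$ is tiled by $(m-1)!/2$ copies of the $(m-3)$-dimensional associahedron, each of which has $\binom{m-1}{2}-1$ facets. On the other hand, every top-dimensional cell of $\kbalpha$ has at most $m$ facets (it is bounded by at most $m$ of the walls $\overline{N}_{i,j}$). The equation $\binom{m-1}{2}-1=m$ has the unique solution $m=5$, and then one checks directly that $\overline{\mathrm K}_{\la 45\ra}$ is tiled by twelve pentagons and is the connected sum of five copies of $\R P^2$, which is $\overline{\M}_0^5(\R)$.

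Your plan, by contrast, tries to force the iterated-surgery description of $\kbalpha$ to coincide literally with Kapranov's iterated blow-up, and this breaks in two places. First, the claim that ``genericity forces $m=5$'' for the code $\la\{m-1,m\}\ra$ is false: that code is realized for every $m\ge 5$ (take $\alpha_1=\dots=\alpha_{m-1}=1$ and any $\alpha_m$ with $m-5<\alpha_m<m-3$), so nothing in the short-subset combinatorics singles out $m=5$. Second, the inference ``some surgery in the chain has positive index $\Rightarrow$ $\kbalpha$ is not obtained purely by blow-ups $\Rightarrow$ $\kbalpha\not\cong\overline{\M}_0^m(\R)$'' is a non-sequitur: two manifolds built from $\mathbb{P}CA_{m-2}$ by different sequences of operations may still be homeomorphic. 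You anticipate this and suggest the number of top cells as a rescue invariant, but that fails too: $\mathbb{P}CA_{m-2}$ itself (code $\la m\ra$, no surgeries at all) already has $(m-1)!/2$ top simplices, matching Kapranov's count, while $\R P^{m-3}\not\cong\overline{\M}_0^m(\R)$ for $m\ge 5$. The invariant that actually separates the two is precisely the one the paper uses---the facet number of a single top cell---and once you have it, the entire surgery/blow-up matching apparatus becomes unnecessary.
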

\begin{proof}
Devadoss \cite{Devadoss} showed that $\overline{\M}_{0}^{m}(\R)\cong \mbalpha$ is tiled by $(m-1)!/2$ copies of the associahedron of dmension $m-3$. 
Recall that the number of facets of this associahedron is $\binom{m-1}{2}-1$. 
Any top dimensional cell of $\kbalpha$ has at most $m$ many facets. 
Observe that $m=\binom{m-1}{2}-1$ if and only if $m=5$.
In this case the top dimensional cell (i.e. $2$-dimensional) of $\kbalpha$ has $5$-facets and it is isomorphic to a pentagon.
Recall that $\overline{\M}_{0}^{m}(\R)\cong \kbalpha$ is the connected sum of $5$ copies of $\mathbb{R}P^2$. 
Since $\overline{\mathrm{K}}_{\la\{4,5\}\ra}$ is tiled by $12$ pentagons and homeomorphic to the connected sum of $5$ copies of $\mathbb{R}P^2$, $\overline{\M}_{0}^{m}(\R)\cong \kbalpha$ for the genetic code $\la\{4,5\}\ra$. 
\end{proof}

We now illustrate the idea of the \Cref{main thm} through the following example.

\begin{example}\normalfont{
Consider the saturated chain of genetic codes $\la 5\ra\preceq \la15\ra\preceq \la25\ra \preceq \la125\ra.$
Note that $\mathcal{G}_{\la125\ra}=\{S_{234}, S_{134}, S_{34}\}$. 
Now we explain how to obtain the cell complex $\mathrm{K}_{\la125\ra}$ (resp. $\overline{\mathrm{K}}_{\la125\ra}$) by performing the cellular surgery on $CA_{3}$ (resp. $\mathbb{P}CA_{3})$ along $\mathcal{G}_{\la125\ra}$ (resp. $\mathbb{P}\mathcal{G}_{\la125\ra}$).
We start with performing surgery on $CA_{3}$ along $S_{234}$. Then we get the complex $\tilde{K}_{15}$ isomorphic to the torus. Note that, if we collapse the hexagonal cylinder onto one of its boundary components we get the complex again isomorphic to the torus. It is easy to see that this complex is isomorphic to the complex $\mathrm{K}_{\la15\ra}$. Later we follow the same process for $S_{134}$ and get the complex $\mathrm{K_{\la25\ra}}$. Now we need to do the surgery along $S_{34}$. Note that $S_{34}$ represents the hexagonal circle in $K_{\la25\ra}$. In this case, the first step is to truncate all the cells adjacent to $S_{34}$. After truncating adjacent cells we get the two disjoint complexes, each of them is isomorphic to the complex obtained from torus removing the hexagonal disc. In the second step, we attach the two disjoint unions of the hexagonal disc to the hexagonal boundary of each complex obtained in the previous step. Then we get the complex isomorphic to the disjoint union of two tori. Note that, if we collapse the attached hexagonal disc of to the point then again the resulting complex is isomorphic to the disjoint union of the torus which is exactly the complex $\mathrm{K}_{125}$. (see \Cref{itcs})

At every step of the iterated cellular surgery on $CA_{3}$, we can take the quotients by antipodal action and get the cellular surgery on $\mathbb{P}CA_{3}$. In particular, at the last step, we get the complex isomorphic to $\overline{\mathrm{K}}_{125}$, the torus.

The following arrows summarize the above process. 
\[CA_{3}\stackrel{234}{\longrightarrow}\tilde{K}_{15}\stackrel{h.e.}{\longrightarrow}\mathrm{K}_{\la15\ra}\stackrel{134}{\longrightarrow}\tilde{K}_{\la25\ra}\stackrel{h.e.}{\longrightarrow}\mathrm{K}_{\la25\ra}\stackrel{34}{\longrightarrow}\tilde{K}_{\la125\ra}\stackrel{h.e.}{\longrightarrow}\mathrm{K}_{\la125\ra}.\]

\[\mathbb{P}CA_{3}\stackrel{234}{\longrightarrow}\tilde{\overline{K}}_{15}\stackrel{h.e.}{\longrightarrow}\mathrm{\overline{K}}_{\la15\ra}\stackrel{134}{\longrightarrow}\tilde{\overline{K}}_{\la25\ra}\stackrel{h.e.}{\longrightarrow}\mathrm{\overline{K}}_{\la25\ra}\stackrel{34}{\longrightarrow}\tilde{\overline{K}}_{\la125\ra}\stackrel{h.e.}{\longrightarrow}\mathrm{\overline{K}}_{\la125\ra}.\]
}


\begin{figure}
$\includegraphics[scale=0.766]{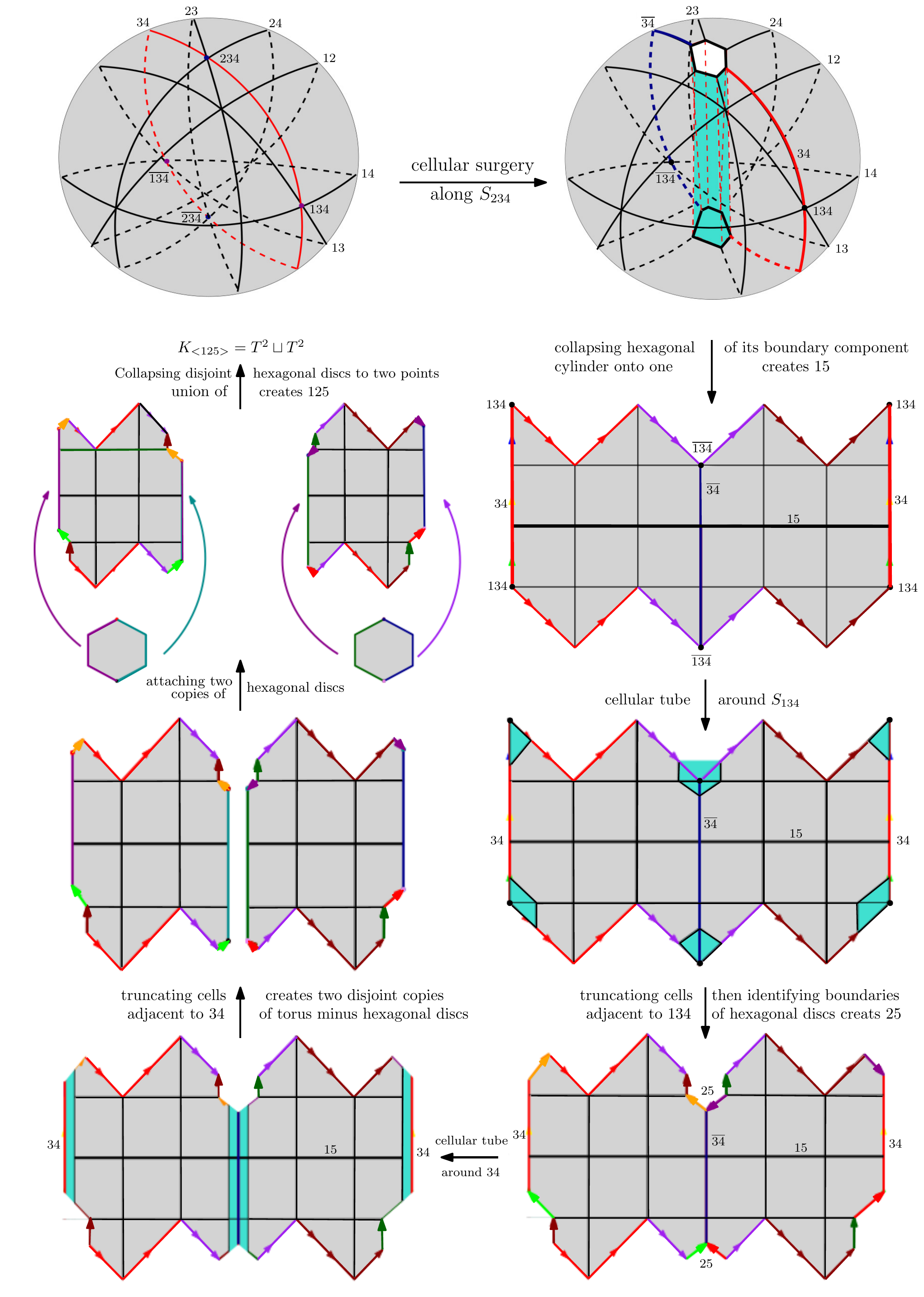}$
\caption{Iterated cellular surgery on $CA_{3}$ along $\mathcal{G}_{\la125\ra}$}
\label{itcs}
\end{figure} 
\end{example}



\noindent \textbf{Acknowledgements:}
The authors would like to thank the anonymous referee for the careful reading and important suggestions to improve the exposition of this article. The authors also thank Anurag Singh for useful discussions related to saturated chains of genetic codes.

\bibliographystyle{plain} 
\bibliography{references}

\end{document}